\newcommand{\R}{\mathbb R}
\newcommand{\E}{\mathbb E}
\renewcommand{\P}{\mathbb P}
\newcommand{\bbH}{\mathbb H}
\newcommand{\F}{\mathscr F}
\renewcommand{\L}{\mathcal L}
\newcommand{\bA}{\mathbf A}
\newcommand{\bB}{\mathbf B}
\newcommand{\bC}{\mathbf C}
\newcommand{\bG}{\mathbf G}
\newcommand{\bI}{\mathbf I}
\newcommand{\bP}{\mathbf P}
\newcommand{\bS}{\mathbf S}
\newcommand{\bT}{\mathbf T}
\newcommand{\bU}{\mathbf U}
\newcommand{\bV}{\mathbf V}
\newcommand{\bX}{\mathbf X}
\newcommand{\bY}{\mathbf Y}
\newcommand{\bZ}{\mathbf Z}
\newcommand{\bOmega}{\mathbf\Omega}
\newcommand{\bSigma}{\mathbf\Sigma}
\newcommand{\bTheta}{\mathbf\Theta}
\newcommand{\bmu}{\boldsymbol{\mu}}
\newcommand{\cC}{\mathcal C}
\newcommand{\cD}{\mathcal D}
\newcommand{\cH}{\mathcal H}
\newcommand{\cK}{\mathcal K}
\newcommand{\cM}{\mathcal M}
\newcommand{\cN}{\mathcal N}
\newcommand{\cR}{\mathcal R}
\newcommand{\cU}{\mathcal U}
\newcommand{\cW}{\mathcal W}
\newcommand{\cX}{\mathcal X}
\newcommand{\cY}{\mathcal Y}
\newcommand{\sH}{\mathscr H}
\newcommand{\dd}{\,\mathrm d}
\newcommand{\as}{\mathrm{a.s.}}
\newcommand{\Fr}{\mathrm{F}}
\newcommand{\GP}{\mathcal{GP}}
\newcommand{\HS}{\mathrm{HS}}
\newcommand{\red}{\mathrm{red}}
\newcommand{\green}{\mathrm{green}}
\newcommand{\tree}{\mathrm{tree}}
\newcommand{\future}{\mathrm{future}}
\newcommand{\nnn}[1]{{\vert\kern-0.25ex\vert\kern-0.25ex\vert #1 
    \vert\kern-0.25ex\vert\kern-0.25ex\vert}}
\newcommand{\snnn}[1]{{\left\vert\kern-0.25ex\left\vert\kern-0.25ex\left\vert #1 
    \right\vert\kern-0.25ex\right\vert\kern-0.25ex\right\vert}}
\DeclarePairedDelimiter\brak{\langle}{\rangle}
\DeclarePairedDelimiter\ceil{\lceil}{\rceil}
\DeclareMathOperator{\col}{col}
\DeclareMathOperator{\diag}{diag}
\DeclareMathOperator{\dist}{dist}
\DeclareMathOperator{\rank}{rank}
\DeclareMathOperator{\polylog}{polylog}
\DeclareMathOperator{\Tr}{Tr}
\DeclareMathOperator{\vol}{vol}
\definecolor{green+}{HTML}{3eff3e}
\definecolor{red+}{HTML}{c70227}
\definecolor{rev}{HTML}{9c89ff}
\definecolor{rev1}{HTML}{cb270f}
\definecolor{rev2}{HTML}{1c8235}
\algrenewcommand\algorithmicrequire{\textbf{Input:}}
\algrenewcommand\algorithmicensure{\textbf{Output:}}
\pgfplotsset{compat=1.18}
\begin{document}

\title{Operator learning for hyperbolic partial differential equations}

\author{\name Christopher Wang \email cyw33@cornell.edu \\
       \addr Department of Mathematics\\
       Cornell University \\
       Ithaca, NY 14853, USA
       \AND
       \name Alex Townsend \email townsend@cornell.edu \\
       \addr Department of Mathematics\\
       Cornell University \\
       Ithaca, NY 14853, USA}

\editor{Dr.~Fei Sha}

\maketitle

\begin{abstract}
We construct the first rigorously justified probabilistic algorithm for recovering the solution operator of a hyperbolic partial differential equation (PDE) in two variables from input-output training pairs. The primary challenge of recovering the solution operator of hyperbolic PDEs is the presence of characteristics, along which the associated Green's function is discontinuous. Therefore, a central component of our algorithm is a rank detection scheme that identifies the approximate location of the characteristics. By combining the randomized singular value decomposition with an adaptive hierarchical partition of the domain, we construct an approximant to the solution operator using $O(\Psi_\epsilon^{-1}\epsilon^{-7}\log(\Xi_\epsilon^{-1}\epsilon^{-1}))$ input-output pairs with relative error $O(\Xi_\epsilon^{-1}\epsilon)$ in the operator norm as $\epsilon\to0$, with high probability. Here, $\Psi_\epsilon$ represents the existence of degenerate singular values of the solution operator, and $\Xi_\epsilon$ measures the quality of the training data. Our assumptions on the regularity of the coefficients of the hyperbolic PDE are relatively weak given that hyperbolic PDEs do not have the ``instantaneous smoothing effect'' of elliptic and parabolic PDEs, and our recovery rate improves as the regularity of the coefficients increases.
\end{abstract}

\begin{keywords}
	Data-driven PDE learning, hyperbolic PDE, operator learning, low-rank approximation, randomized SVD
\end{keywords}

\section{Introduction}
    
    In this paper, we consider the recovery of the solution operator associated with hyperbolic partial differential equations (PDEs) from data. Generally, the task of PDE learning is to capture information about an unknown, inhomogeneous PDE given data corresponding to the observed effect of the PDE on input functions \citep{Fan2019, Fan2020, Gin2021, Karniadakis2021, Kovachki2023, Li2020a, Li2020b, Li2021, Lu2021a, Lu2021b, WangS2021}. These PDEs typically represent real-world dynamical systems whose governing principles are poorly understood, even though one can accurately observe or predict their evolutions, either through experimentation or through simulation. Data-driven PDE learning has applications in climate science \citep{Bi2023, Lam2023}, biology \citep{Raissi2020}, and physics \citep{Chen2021, Kochkov2021, Kutz2017, Qian2020}, and is a significant area of research in scientific machine learning and reduced order modeling \citep{Berman2023, Berman2024, Brunton2016, Chen2023, deHoop2023, Krishnapriyan2021, Rudy2017, Subramanian2023, Zhang2018}. In particular, many effective practical schemes based on neural networks have been developed to recover the solution operator associated with an unknown PDE \citep{Boulle2022d, Feliu2020, Fan2020, Gin2021, Li2020a, Li2020b, Li2021, Wan2023, WangS2021}, although the inscrutability of these neural networks as ``black boxes'' often prevents one from understanding the underlying explanation for their success. Moreover, theoretical research in PDE learning typically centers on error estimates for neural network-based schemes \citep{Kovachki2023, Lanthaler2022, Lu2021a}. There is a growing body of research for operator learning in the context of elliptic and parabolic PDEs \citep{Boulle2022b, Boulle2022a, Schafer2023, Schafer2021}, but there is a lack of theoretical work on solution operators of hyperbolic PDEs. The existing work focuses primarily on deep learning models for solving hyperbolic PDEs \citep{Arora2023, Berman2024, Bruna2024, Guo2020, Huang2023, RodriguezTorrado2022, Thodi2023} or related operators associated with inverse problems \citep{Khoo2019, Li2022, Molinaro2023}, rather than recovering their solution operators. Therefore, we aim to shed light on the theoretical aspects of solution operator learning for hyperbolic PDEs and to open the field for further research.
    
    We consider an unknown second-order hyperbolic linear partial differential operator (PDO) in two variables of the form:
    \begin{subequations}\label{eq:problem}
    \begin{equation}\label{eq:L}
    	\L u := u_{tt} - a(x,t)u_{xx} + b(x,t)u_x + c(x,t)u, \qquad (x,t)\in D_T:=[0,1]\times[0,1]
	\end{equation}
	together with homogeneous initial-boundary conditions
	\begin{equation}\label{eq:homogeneous}
		\left\{\begin{alignedat}{2}
			u(x,0) = u_t(x,0) &= 0, && \qquad 0 \le x \le 1 \\
			u(0,t) = u_x(0,t) &= 0, && \qquad 0 \le t \le 1 \\
			u(1,t) = u_x(1,t) &= 0, && \qquad 0 \le t \le 1 \\
		\end{alignedat}\right\}.
	\end{equation}
	\end{subequations}
	We assume that the coefficients are somewhat regular, namely, $a,b,c\in \cC^1(D_T)$, and that $\L$ is strictly hyperbolic, i.e., $a>0$, and self-adjoint, i.e., $a_x+b=0$.\footnote{We discuss relaxations of these constraints, as well as inhomogeneous initial conditions, in Section \ref{s:conclusion}.} For equations of the form $\L u = f$, the function $f$ is called the forcing term of the PDE, while $u$ is the corresponding system's response or solution. The hyperbolic equation $\L u = f$ describes wave-like phenomena, especially in heterogeneous media, such as water waves, acoustic and electromagnetic signals, or earthquakes \citep{Evans2010, Lax2006}.
	
	Our learning task is to approximate the solution operator that maps forcing terms to responses, given training data in the form of input-output pairs $\{(f_j,u_j)\}_{j=1}^N$ that satisfy either the Cauchy problem \eqref{eq:problem} or the corresponding adjoint Cauchy problem (see Section 4 for details). Associated with the Cauchy problem \eqref{eq:problem} is a unique Green's function $G:D_T\times D_T\to\R$, which is a kernel for the solution operator \citep{Courant1962, Mackie1965}. That is, solutions to the problem \eqref{eq:problem} are given by the integral operator
	\begin{equation}\label{eq:G}
		u(x,t) = \int_{D_T} G(x,t;y,s) f(y,s) \dd y\dd s, \qquad (x,t)\in D_T,
	\end{equation}
	for $f\in L^2(D_T)$. We call $G$ the {\em homogeneous Green's function}. Our goal is to recover the action of the integral operator in \eqref{eq:G} as accurately as possible, measured by the operator norm. Notably, {\em we are not solving an inverse problem}, in that we are not interested in learning the coefficients of \eqref{eq:L}.\footnote{Learning the coefficients of an equation can be done with techniques similar to sparse identification of nonlinear dynamics (SINDy) \citep{Brunton2016, Kaiser2018}.} In fact, due to the compactness of the solution operator, its recovery is well-posed, and our proposed algorithm is stable (see Section \ref{ss:HS-op}). Rather, our task is to construct a direct solver for the Cauchy problem \eqref{eq:problem} without explicit knowledge of the equation's coefficients.
		
	\subsection{Challenges and contributions}
	
	This paper describes the first theoretically justified scheme for recovering the solution operator associated with hyperbolic PDEs using only input-output data. We also provide a rigorous rate of recovery for our algorithm. While we adopt some of the strategies used by \citet{Boulle2022b} and \citet{Boulle2022a}, such as the randomized singular value decomposition (rSVD), we face three unique circumstances when recovering the Green's functions of hyperbolic PDEs that make our situation challenging:
	
	\subsubsection*{Characteristic curves}
	The primary difficulty of recovering the Green's function of a hyperbolic PDE is the presence of characteristic curves (or simply characteristics), along which the Green's function is highly irregular. Characteristics describe the trajectory of waves in spacetime and are determined by the coefficients of \eqref{eq:L}, so their location is also unknown to us in advance. Thus, our recovery algorithm needs to detect if a region of the Green's function's domain intersects a characteristic curve, using only input-output data.
	
	\subsubsection*{Adaptive partitioning}
	For elliptic and parabolic PDEs, the Green's functions are numerically low-rank off the diagonal, so they are well-suited to an approximation strategy via hierarchical partitioning of the domain \citep{Bebendorf2003, Boulle2022b}. Since the Green's functions of hyperbolic PDEs have numerically high rank not only on the diagonal but also along the characteristics, we cannot apply a naive hierarchical partition of the domain. Instead, we use an adaptive partitioning strategy, which at each level uses information from the rSVD to decide which regions to partition further.
		
	\subsubsection*{Regularity of coefficients}
	Underlying the difficulty posed by characteristic curves is a more fundamental issue with hyperbolic PDEs: irregularities are not instantaneously smoothed and propagate. This feature of Green's functions makes them challenging to recover, as the variable coefficients can create additional discontinuities. Accordingly, the recovery rate we derive for the hyperbolic case depends on the regularity of the coefficients; the more regular the coefficients, the faster the recovery.

	\vspace{2mm}
	To overcome these challenges, we rely on the following three properties of the Green's function $G$ of the hyperbolic PDO of \eqref{eq:L} (see Section \ref{s:Green}):
	\begin{enumerate}
		\item The function $G$ is square-integrable, with jump discontinuities along the characteristics and on the diagonal of $D_T\times D_T$.
		\item Away from the diagonal and the characteristics, $G$ is regular and, consequently, numerically low-rank.
		\item The characteristics form a piecewise regular hypersurface in $D_T\times D_T$ and never ``accumulate'' in finite time. In other words, the volume of a tube of radius $\delta$ around the characteristic surface shrinks to zero as $\delta\to0$.
	\end{enumerate}
	
	Our main technical contribution is the derivation of a rigorous probabilistic algorithm that constructs an approximant to the solution operator associated with \eqref{eq:L} using randomly generated input-output data $\{(f_j,u_j)\}_{j=1}^N$, with a small error in the operator norm. In particular, we show in Theorem \ref{thm:main} that with high probability, the solution operator can be recovered within a tolerance of $\Xi_\epsilon^{-1}\epsilon$ using $O(\Psi_\epsilon^{-1}\epsilon^{-7} \log(\Xi_\epsilon^{-1}\epsilon^{-1}))$ input-output pairs, where $\Xi_\epsilon$ and $\Psi_\epsilon$ are defined in \eqref{eq:Xi} and \eqref{eq:Psi} and describe features of the operator $\L$ and the input-output data---namely, the size of the singular value gaps of the solution operator, and the quality of the covariance kernel used to generate the training data---that cannot be controlled without additional assumptions. Our construction relies on an adaptive hierarchical partition of the spatio-temporal domain, which roughly identifies the location of the characteristic curves, combined with the rSVD for HS operators. We remark that the learning rate is comparable to the one derived by \citet{Boulle2022a} for elliptic PDEs, where it was later shown that an $\epsilon^{-6}$ factor can be improved to $\polylog(1/\epsilon)$ using the peeling algorithm; see Section \ref{ss:peeling} \citep{Boulle2023, Levitt2022, Lin2011}. We also emphasize that our scheme succeeds for equations with \emph{both space- and time-dependent} coefficients, which informs the use of the time domain rather than the frequency domain in our analysis. While we are interested in as general of a setting as possible, other schemes that exploit the Helmholtz equation through the frequency domain may be preferable when the coefficients are time-independent \citep{Anderson2020, Liu2023, ZepedaNunez2016}.
	
	A key component of our probabilistic algorithm is a scheme that detects the numerical rank of an operator using input-output data, which allows us to tell whether or not a domain intersects a characteristic. We do so by showing, in Theorem \ref{thm:rSVA}, that the rSVD can efficiently recover an operator's dominant singular subspaces. We then use the singular subspaces to approximate the dominant singular values of the operator, whose decay rate corresponds to the operator's numerical rank. While our scheme assumes the existence of a gap between adjacent singular values, such an assumption is reasonable in practice since the singular values of the solution operator typically exhibit decay, which is fast enough for efficient numerical rank detection \citep{Meier2024}. Still, we believe that the assumption of a singular value gap can be discarded, although this will not be discussed in the present work.
	
	Our rank detection scheme facilitates the adaptive feature of the partitioning strategy, since at each hierarchical level we partition only the subdomains flagged as numerically high-rank. While such adaptive strategies are not new and have been previously applied to wave-like settings \citep{Liu2021, Massei2022, ZepedaNunez2016}, our work supplies, to our knowledge, the first theoretical guarantee that the rSVD can be used in an adaptive scheme in a stable manner and with high probability of success. The use of the rSVD is particularly important in the realm of operator learning, where one often only has access to input-output data and thus cannot compute a SVD directly.
	
	Finally, in Theorem \ref{thm:rSVD-op}, we improve the error estimates for the rSVD for HS operators, as derived by \citet{Boulle2022a}, by a factor of $\sqrt k$, where $k$ is the target rank of the constructed approximant. Assuming one always oversamples using an additional $k$ training pairs, then the error factor in \citet[Thm.~1]{Boulle2022a} grows to infinity roughly like $O(k)$ as $k\to\infty$, whereas, practically speaking, our error factor remains bounded. This improvement shows that the rSVD for HS operators behaves similarly to the rSVD for matrices; our bounds are asymptotically comparable to the bounds proved in \citet{Halko2011}.
	
	We remark that because our algorithm is mainly of theoretical value, we choose to work in the continuous setting---that is, without considering discretization---both to simplify the analysis and to maintain discretization-invariance of our theoretical guarantees.  It may be desirable to learn the Green's function as an operator between infinite-dimensional function spaces rather than to learn a space- and time-discretized version of the Green's function. Indeed, \citet{Huang2024} shows that learning the solution operator as a ``function-to-function'' map and only discretizing its inputs and outputs when necessary may be more data-efficient than learning its discretization as a ``vector-to-vector'' map. In practice, learning a solution operator without discretizing in space or time can be achieved by working in a finite-dimensional subspace of $L^2$ using, for instance, the Legendre basis to represent functions, as done in the MATLAB package {\tt chebfun} \citep{chebfun}. Nevertheless, we implement a space- and time-discretized version of our algorithm in Section \ref{s:numerics}, demonstrating its robustness to discretization.

	\subsection{Organization}
	
	The paper proceeds as follows. In Section \ref{s:Green}, we review the characteristics of hyperbolic PDEs and their relationship with the Green's function, while Section \ref{s:rSVD} develops the necessary tools from randomized linear algebra, namely, the rSVD for Hilbert--Schmidt (HS) operators. These tools are employed in Section \ref{s:main} to construct our probabilistic algorithm for recovering the solution of a hyperbolic PDO using input-output training pairs; we also analyze the recovery rate and probability of success. A numerical implementation and example of our algorithm is presented in Section \ref{s:numerics}. Finally, we summarize our results and discuss further directions of research in Section \ref{s:conclusion}. Background material on HS operators, quasimatrices, orthogonal projectors, Gaussian processes (GPs), and the Legendre basis can be found in \hyperref[appendix-A]{Appendix A}. Proofs related to the rSVD appear in \hyperref[appendix-B]{Appendix B}.

	\subsection{Notation}
	
	Throughout the paper, we use the following notation. Norms are denoted by $\|\cdot\|$, and the type of norm is given by a subscript. If the argument is an operator, then $\|\cdot\|$ without a subscript denotes the operator norm. For a matrix or quasimatrix $\bA$ (see Section \ref{ss:quasimatrices}), we denote its Moore--Penrose pseudoinverse by $\bA^\dagger$. We write $\bI_m$ to denote an $m\times m$ identity matrix. For a random object $\bX$, we write $\bX\sim\cD$ to mean that $\bX$ is drawn from the distribution $\cD$; we write $\bX\sim\bY$ to mean that $\bX$ has the same distribution as a different random object $\bY$. We use $\cN(\mu,\sigma^2)$ to denote the univariate Gaussian distribution with mean $\mu$ and variance $\sigma^2$. For a vector $\bmu\in\R^n$ and a symmetric positive definite $n\times n$ matrix $\bC$, we write $\cN(\bmu,\bC)$ for the multivariate Gaussian distribution with mean $\bmu$ and covariance matrix $\bC$. For an integer $r\ge0$, $\cC^r(D)$ denotes the space of $r$-times continuously differentiable functions on a domain $D$; if $D$ is closed, then the functions in $\cC^r(D)$ are also required to extend continuously to the boundary of $D$. The other function space we use is the Hilbert space $L^2(D)$ of square-integrable functions on $D$.

	\section{Green's functions of hyperbolic PDOs}\label{s:Green}
	
	Our recovery scheme relies crucially on understanding where the singularities of the Green's function lie. In this section, we investigate the geometry of characteristic curves and discuss the dependence of the regularity of the Green's function on the regularity of the coefficients of $\L$ as in \eqref{eq:L}.  We assume that the coefficients of $\L$ have regularity $a,b,c\in \cC^r(D_T)$, for integer $r\ge1$.

	\subsection{Green's functions in the domain \texorpdfstring{$\bbH$}{H}}
	
	We first consider the homogeneous Green's function $G^\bbH$ of $\L$, as defined in \eqref{eq:G}, in the unbounded domain $\bbH = \R\times[0,\infty)$.\footnote{Much of the classical literature discusses what is known as the {\em Riemann function}, rather than the Green's function, of hyperbolic equations. The Riemann function (also referred to as the Riemann--Green function or radiation solution) is a fundamental solution of the PDE and allows one to write an integral solution to the homogeneous equation given Cauchy initial data. Essentially, every result for the Riemann function also holds for the homogeneous Green's function, as they are closely related \citep{Mackie1965}.} Since we assume the coefficients $a,b,c$ are at least continuously differentiable, $G$ exists and is unique \citep[Ch.~V.5--6]{Courant1962}. Its discontinuities lie on the characteristics, which we now describe.
	
	The characteristic curves passing through some point $(x_0,t_0)\in\bbH$ can be interpreted as the paths in spacetime traversed by waves propagating from an instantaneous unit force at $(x_0,t_0)$. We obtain a family of characteristic curves by iterating over all $(x_0,t_0)\in\bbH$. For hyperbolic equations in two variables, the characteristic curves are the graphs of solutions to the following ordinary differential equations:
	\begin{equation}\label{eq:char-ODE}
		x'(t) + \sqrt{a(x(t),t)} = 0, \qquad x'(t) - \sqrt{a(x(t),t)} = 0
	\end{equation}
	over all choices of initial conditions \citep[Ch.~III.1]{Courant1962}. Thus, for any $(x_0,t_0)\in\bbH$, there exist exactly two characteristic curves passing through $(x_0,t_0)$, given by the equations in \eqref{eq:char-ODE}, both of which are of class $\cC^{r+1}$ and satisfy the initial condition $x(t_0)=x_0$. Viewed as functions of $t$, one is strictly increasing, and the other is strictly decreasing, so they intersect transversally at $(x_0,t_0)$ and partition $\bbH$ into four connected components lying respectively to the north, south, east, and west of $(x_0,t_0)$. We refer to the component to the north---that is, forward in time---as the {\em future light cone} $\Lambda_\future(x_0,t_0)$. Additionally, we call the characteristic ``ray'' emanating toward the northwest of $(x_0,t_0)$ the {\em negative characteristic ray}, and likewise we call the ray emanating toward the northeast the {\em positive characteristic ray} (see Figure \ref{fig:char-H}).

	\begin{figure}[t]
		\centering
		\begin{tikzpicture}[domain=0:7,
			dot/.style={circle, fill=black, inner sep=0pt, minimum size=5pt, node contents={}}]
        
  			\draw[latex-latex] (-1,0) -- (7,0) node[right] {$x$};			
			\draw[-latex] (0,0) -- (0,6) node[above] {$t$};					
			\draw[-Latex, scale=2.5, domain=1:1.76, smooth, variable=\y, line width=0.4mm]
				plot ({sinh(-1.385 + 2*sqrt(\y+1))-1}, \y)
				node[above] {$\zeta_+(x_0,t_0)$};							
			\draw[-Latex, scale=2.5, domain=1:2.132, smooth, variable=\y, line width=0.4mm]
				plot ({-sinh(-4.272 + 2*sqrt(\y+1))-1}, \y)
				node[left] {$\zeta_-(x_0,t_0)$};							
			\draw[dashed, scale=2.5, domain=0.121:1, smooth, variable=\y, line width=0.4mm]
				plot ({sinh(-1.385 + 2*sqrt(\y+1))-1}, \y);					
			\draw[dashed, scale=2.5, domain=0.362:1, smooth, variable=\y, line width=0.4mm]
				plot ({-sinh(-4.272 + 2*sqrt(\y+1))-1}, \y);				
			\draw (2.5,2.5) node[dot, label={[label distance=2mm]below:{$(x_0,t_0)$}}]; 
			
			\draw (3,4.5) node {$\Lambda_\future(x_0,t_0)$};	
		\end{tikzpicture}
		\caption{Characteristics associated with the coefficient $a(x,t) = \frac{(x+1)^2+1}{t+1}$ in the domain $\bbH$, with initial point $(x_0,t_0)=(1,1)$. They represent wave trajectories in spacetime produced by a unit force at $(1,1)$: solid curves are future trajectories, while dashed curves are past trajectories. Positive and negative characteristic rays emanating from $(x_0,t_0)$ are solid and labeled by $\zeta_\pm(x_0,t_0)$. The future light cone is the region labeled $\Lambda_\future$.} 
		\label{fig:char-H}
	\end{figure}
	
	We are interested in the ``bundle'' of positive and negative characteristic rays indexed over all initial points $(x_0,t_0)\in\bbH$, since they determine where the Green's function is irregular. Let $\zeta_\pm(x_0,t_0)$ denote the positive and negative characteristic rays, respectively, emanating from $(x_0,t_0)$. Then we define
	\begin{equation}\label{eq:Z-pm-H}
		Z_\pm^\bbH := \{(x,t,x_0,t_0)\in\bbH\times\bbH : (x,t) \in \zeta_\pm(x_0,t_0)\}
	\end{equation}
	as well as
	\begin{equation}\label{eq:Z-H}
		Z^\bbH := Z_+^\bbH \cup Z_-^\bbH.
	\end{equation}
	Observe that $Z_\pm^\bbH$ are 3-dimensional $\cC^r$-manifolds with boundary in $\bbH\times\bbH$.\footnote{Here, a $\cC^r$-manifold is simply a manifold whose transition maps are of class $\cC^r$. The claim here follows from the observation that $Z_\pm^\bbH$ can be defined as the graph of the flow generated by the time-dependent vector field $(x,t)\mapsto\sqrt{a(x,t)}$. The details are irrelevant here, and we omit them.}
	
	For fixed $(x_0,t_0)\in D_T$, the slices $G^\bbH_{x_0,t_0}$ given by $(x,t)\mapsto G^\bbH(x,t;x_0,t_0)$ are $r$-times continuously differentiable as long as $(x,t)$ does not lie on either of the characteristic rays emanating from $(x_0,t_0)$ \citep[Thm.~1]{Lerner1991}. Moreover, due to symmetries of fundamental solutions of hyperbolic equations \citep[Ch.~V.5]{Courant1962}, we conclude that $G^\bbH$ has the same regularity in every variable.
	
	For hyperbolic equations in two variables, the singularity of the Green's function on the characteristics is a jump discontinuity. This is because $G^\bbH_{x_0,t_0}$ restricted to $\Lambda_\future(x_0,t_0)$ satisfies continuous boundary conditions on the characteristic rays \citep[Ch.~V.5]{Courant1962}. Outside the future light cone, the Green's function is identically zero \citep{Mackie1965}. In summary, we have 
	\begin{equation}\label{eq:G-reg-H}
		G^\bbH\in \cC^r((\bbH\times\bbH)\setminus Z^\bbH),
	\end{equation}
	where $Z^\bbH$ is defined by \eqref{eq:Z-H}.

	\subsection{Green's functions in the domain \texorpdfstring{$D_T$}{D\_T}}\label{ss:char-boundary}
	
	\begin{figure}[t]
		\centering
		\begin{tikzpicture}[domain=-1:7,
			dot/.style={circle, fill=black, inner sep=0pt, minimum size=5pt, node contents={}}]
			
  			\draw[latex-latex] (-1,0) -- (7,0) node[right] {$x$};	
			\draw[-latex] (0,0) -- (0,9.5) node[above] {$t$};		
			\draw[-latex] (6,0) -- (6,9.5);							
			\draw[-Latex, scale=3, domain=1:1.566, smooth, variable=\y, line width=0.4mm] 
				plot ({sinh(-1.385 + 2*sqrt(\y+1))-1}, \y)
				node[left] {$\zeta_+^{(0)}(x_0,t_0)$\quad \ }
				node[right] {$t_+^{(0)}$};							
			\draw[dashed, scale=3, domain=0.284:1, smooth, variable=\y, line width=0.4mm]
				plot ({sinh(-1.385 + 2*sqrt(\y+1))-1}, \y);			
			\draw[dashed, scale=3, domain=0:0.284, smooth, variable=\y, line width=0.4mm] 
				plot ({-sinh(-3.148 + 2*sqrt(\y+1))-1}, \y);		
			\draw[-Latex, scale=3, domain=1.566:3, smooth, variable=\y, line width=0.4mm]
				plot ({-sinh(-5.022 + 2*sqrt(\y+1))-1}, \y)
				node[above]	{\qquad\ \ $\zeta_+^{(1)}(x_0,t_0)$};	
			\draw[-Latex, scale=3, domain=1:1.874, smooth, variable=\y, line width=0.4mm]
				plot ({-sinh(-4.272 + 2*sqrt(\y+1))-1}, \y)
				node[right] {\quad $\zeta_-^{(0)}(x_0,t_0)$}
				node[left] {$t_-^{(0)}$};							
			\draw[dashed, scale=3, domain=0.505:1, smooth, variable=\y, line width=0.4mm]
				plot ({-sinh(-4.272 + 2*sqrt(\y+1))-1}, \y);		
			\draw[-Latex, scale=3, domain=1.874:3, smooth, variable=\y, line width=0.4mm]
				plot ({sinh(-2.509 + 2*sqrt(\y+1))-1}, \y)
				node[above right] {$\zeta_-^{(1)}(x_0,t_0)$};		
			\draw[dashed, scale=3, domain=0:0.505, smooth, variable=\y, line width=0.4mm]
				plot ({sinh(-0.635 + 2*sqrt(\y+1))-1}, \y);			
			\draw (3,3) node[dot, label={[label distance=2mm]below:{$(x_0,t_0)$}}]; 
		\end{tikzpicture}
		\caption{In a bounded domain, the characteristics reflect off the boundary, producing a series of ``reflecting characteristic segments,'' depicted by solid curves labeled $\zeta_\pm^{(j)}(x_0,t_0)$, for $j=0,1,2,\dots$. ``Collision points'' are labeled $t_\pm^{(j)}$.}
		\label{fig:char-boundary}
	\end{figure}
	
	When we consider $\L$ on the bounded domain $D_T = [0,1]\times[0,1]$, we must also establish homogeneous boundary conditions on $\{0,1\}\times[0,1]$, in addition to homogeneous initial conditions. Homogeneous boundary conditions produce wave reflections, so the characteristic curves ``reflect'' off the boundaries. When a positive characteristic ray collides with the right boundary, its reflection is given by the negative characteristic ray emanating from the collision point. Likewise, when a negative characteristic ray collides with the left boundary, its reflection is given by the positive characteristic ray emanating from the collision point.
	
	Wave trajectories can thus be described as a series $\zeta_\pm^{(0)}(x_0,t_0),\zeta_\pm^{(1)}(x_0,t_0),\dots$ of ``reflecting characteristic segments'' emanating from $(x_0,t_0)$, which terminates at some index $N(x_0,t_0)$ once the segments cross the time horizon $t=1$ (see Figure \ref{fig:char-boundary}). Letting $N = \max_{(x_0,t_0)\in D_T}N(x_0,t_0)$, we define, analogous with \eqref{eq:Z-pm-H} and \eqref{eq:Z-H}, the objects
	\begin{equation}\label{eq:Z-pm-boundary}
		Z_\pm^{(j)} := \{(x,t,x_0,t_0)\in D_T\times D_T : (x,t) \in \zeta_\pm^{(j)}(x_0,t_0)\},
			\qquad 0\le j\le N,
	\end{equation}
	as well as
	\begin{equation}\label{eq:Z-boundary}
		Z := \bigcup_{j=0}^N \left(Z_+^{(j)} \cup Z_-^{(j)}\right).
	\end{equation}
	Again, each component $Z_\pm^{(j)}$, $j=1,\dots,N$ is a 3-dimensional $\cC^r$-manifold with boundary in $D_T\times D_T$, so that $Z$ is a piecewise $\cC^r$-manifold. Notice that the number of reflections $N$ is bounded by $\max_{(x,t)\in D_T}\sqrt{a(x,t)}$.\footnote{The claims made in this section can be seen by applying the method of reflections to solve the homogeneous initial-boundary problem which defines the homogeneous Green's function in a bounded domain \citep[see, e.g.,][]{Laurent2021}. Again, the details are not relevant to us.}
	
	Besides the reflecting characteristics, the homogeneous Green's function $G$ in the domain $D_T$ has the same regularity properties as the homogeneous Green's function $G^\bbH$ in the domain $\bbH$. The reflections produced by homogeneous boundary conditions manifest as additional jump discontinuities for $G$, lying on the reflecting characteristic segments described in Section \ref{ss:char-boundary}. In other words, on a bounded domain, we have
	\begin{equation}\label{eq:G-reg-boundary}
		G\in \cC^r((D_T\times D_T)\setminus Z),
	\end{equation}
	where $Z$ is defined by \eqref{eq:Z-boundary}.
	
	\begin{figure}[h]
		\centering
		\begin{tikzpicture}[domain=0:7,
			dot/.style={circle, fill=black, inner sep=0pt, minimum size=5pt, node contents={}}]
        
  			\draw[latex-latex] (-1,0) -- (7,0) node[right] {$x$};	
			\draw[-latex] (0,0) -- (0,6) node[above] {$t$};			
			\draw[-latex] (6,0) -- (6,6);							
			\draw[-, domain=1.5:6, smooth, variable=\x, line width=0.4mm]
				plot (\x,0.5+\x/3);
			\draw[-, domain=0:6, smooth, variable=\x, line width=0.4mm]
				plot (\x,4.5-\x/3);
			\draw[-Latex, domain=0:4, smooth, variable=\x, line width=0.4mm]
				plot (\x,4.5+\x/3);
				
			\draw[-, domain=0:1.5, smooth, variable=\x, line width=0.4mm]
				plot (\x,1.5-\x/3);
			\draw[-, domain=0:6, smooth, variable=\x, line width=0.4mm]
				plot (\x,1.5+\x/3);
			\draw[-, domain=0:6, smooth, variable=\x, line width=0.4mm]
				plot (\x,5.5-\x/3);
			\draw[-Latex, domain=0:1, smooth, variable=\x, line width=0.4mm]
				plot (\x,5.5+\x/3);
				
			\draw (1.5,1) node[dot, label={[label distance=1mm]below:{$(x_0,t_0)$}}]; 
			\draw (3,2) node {$\tfrac16$};
			\draw (3,3.95) node {$-\tfrac16$};
			\draw (1.5,5.5) node {$\tfrac16$};
			\draw (4.5,1) node {$0$};
			\draw (1.5,3) node {$0$};
			\draw (5.5,3) node {$0$};
			\draw (0.5,5) node {$0$};
			\draw (4.5,5) node {$0$};
		\end{tikzpicture}
		\caption{A slice of the Green's function associated with the wave operator $\L u = u_{tt}-9u_{xx}$ with initial point $(x_0,t_0)=(\tfrac14,\tfrac16)$. In this case, the Green's function is piecewise constant with jump discontinuities on the characteristics. The values of the Green's function are labeled in their respective regions.} 
		\label{fig:char-H-constant}
	\end{figure}
	
	\begin{example}
		Consider the constant-coefficient wave equation $u_{tt} - a^2u_{xx} = f$. The Green's function is quite simple to understand and can be written explicitly, but it would be very notationally complicated due to the boundary conditions. It is piecewise constant, and its value in each component is either $0$ or $\pm\tfrac1{2a}$ (see Figure \ref{fig:char-H-constant}). Its components are partitioned by characteristic curves, which are lines of slope $\tfrac1a$ emanating from the initial points $(x_0,t_0)$ and reflecting off the boundary. For general hyperbolic PDEs of the form \eqref{eq:problem}, the Green's function is usually not piecewise constant; nevertheless, it has the same qualitative properties, in particular jump discontinuities on the characteristics.
	\end{example}
	
	\section{Randomized SVD for Hilbert--Schmidt operators}\label{s:rSVD}
	
	This section introduces our primary tool, which we have adapted from randomized numerical linear algebra. The landmark result of \citet{Halko2011} proved that one could recover the column space of an unknown matrix with high accuracy and a high probability of success by multiplying it with standard Gaussian vectors. \citet{Boulle2022a} extend this result to HS operators and functions drawn from a non-standard Gaussian process. This algorithm is called the rSVD.

	\subsection{Randomized SVD}\label{ss:rSVD-op}
	
	Given a HS operator $\F:L^2(D_1)\to L^2(D_2)$ with SVE as in \eqref{eq:Ff-SVE}, we define two quasimatrices $\bU$ and $\bV$ containing the left and right singular functions of $\F$, so that the $j$th column of $\bU$ and $\bV$ is respectively $e_j$ and $v_j$. We also denote by $\bSigma$ the infinite diagonal matrix with the singular values $\sigma_1\ge\sigma_2\ge\cdots$ of $\F$ on the diagonal. For a fixed integer $k\ge0$, we define $\bV_1$ as the $D_1\times k$ quasimatrix whose columns are the first $k$ right singular functions $v_1,\dots,v_k$, and $\bV_2$ as the $D_1\times\infty$ quasimatrix whose columns are $v_{k+1},v_{k+2},\dots$. We analogously define $\bU_1$ and $\bU_2$ with the left singular functions. Similarly, we define $\bSigma_1$ as the $k\times k$ diagonal matrix with the first $k$ singular values $\sigma_1,\dots,\sigma_k$ on the diagonal, and $\bSigma_2$ as the $\infty\times\infty$ diagonal matrix with $\sigma_{k+1},\sigma_{k+2},\dots$ on the diagonal. In summary, we write
	$$\begin{blockarray}{ccc}
		& \\
		\begin{block}{c[cc]}
			\F = & \bU_1 & \bU_2 \\
		\end{block}
	\end{blockarray}
	\begin{blockarray}{cccc}
		k & \infty & & \\
		\begin{block}{[cc][c]c}
			\bSigma_1 & {\bf 0} & \bV_1^* & k \\
			{\bf 0} & \bSigma_2 & \bV_2^* & \infty \\
		\end{block}
	\end{blockarray}.$$
	Additionally, for some fixed continuous symmetric positive definite kernel $K:D_1\times D_1\to\R$ with eigenvalues $\lambda_1\ge\lambda_2\ge\dots>0$, we define the infinite matrix $\bC$ by
	\begin{equation}\label{eq:C}
		[\bC]_{ij} = \int_{D_1\times D_1} v_i(x)K(x,y)v_j(y)\dd x\dd y, \qquad i,j\ge1.
	\end{equation}
	Observe that $\bC$ is symmetric and positive definite, and that $\Tr(\bC) = \Tr(K) < \infty$, so it is a compact operator \citep[Lem.~1 and Eq.~(11)]{Boulle2022a}. We denote by $\bC^{-1}$ the inverse operator of $\bC$ on the domain for which it is well-defined. Furthermore, for fixed integer $k\ge1$, we partition $\bC$ into
	$$\bC = \begin{blockarray}{ccc}
		k & \infty & \\
		\begin{block}{[cc]c}
			\bC_{11} & \bC_{12} & k \\
			\bC_{21} & \bC_{22} & \infty \\
		\end{block}
	\end{blockarray}.\vspace{-3mm}$$
	Since $\bC_{11}$ is also positive definite and thus invertible, we define
	\begin{equation}\label{eq:gamma-eta}
		\gamma_k := \frac{k}{\lambda_1\Tr(\bC_{11}^{-1})}, \qquad 
		\xi_k := \frac{1}{\lambda_1\|\bC_{11}^{-1}\|},
	\end{equation}
	which quantify the quality of the covariance kernel $K$ with respect to $\F$. Indeed, the Courant--Fischer minimax principle implies that the $j$th largest eigenvalue of $\bC$ is bounded by $\lambda_j$, since $\bC$ is a principal submatrix of $\bV^*K\bV$. It follows that $0<\gamma_k,\xi_k\le 1$, and that the best case scenario occurs when the eigenfunctions of $K$ are the right singular functions of $\F$. In that case, $\bC$ is an infinite diagonal matrix with entries $\lambda_1\ge\lambda_2\ge\dots>0$, and $\gamma_k = k/(\sum_{j=1}^k\lambda_1/\lambda_j)$ and $\xi_k = \lambda_k/\lambda_1$ attain their minimal values. One can view $\xi_k$ as the operator norm analogue of $\gamma_k$, discussed in more detail in \citet[\S 3.4]{Boulle2022a}.
	
	Finally, for some $X\subset D_T$, let $\cR_X:L^2(D_T)\to L^2(X)$ denote the restriction to $X$. Its adjoint $\cR_X^*:L^2(X)\to L^2(D_T)$ is the zero extension operator, i.e., $\cR_X^*f$ is the function which is equal to $f$ on $X$ and equal to zero everywhere else. We denote by $\F_{X\times Y} := \cR_X\F\cR_Y^*$ the restriction of $\F$ to $X\times Y$. When considering the restricted operator, we define the analogous quantities
	\begin{equation}\label{eq:gamma-eta-XY}
		\gamma_{k,X\times Y} := \frac{k}{\lambda_1\Tr(\bC_{11,X\times Y}^{-1})}, \qquad
		\xi_{k,X\times Y} := \frac{1}{\lambda_1\|\bC_{11,X\times Y}^{-1}\|},
	\end{equation}
	where $\bC_{11,X\times Y}$ is the $k\times k$ matrix
	$$[\bC_{11,X\times Y}]_{ij} := \int_{D_1\times D_1} 
		\cR^*_Yv_{i,X\times Y}(x)K(x,y)\cR^*_Yv_{j,X\times Y}(y) \dd x \dd y,
		\qquad 1\le i,j\le k,$$
	and $v_{1,X\times Y},\dots,v_{k,X\times Y}$ are the dominant right $k$-singular functions of $\F_{X\times Y}$. We also define $\sigma_{1,X\times Y}\ge\sigma_{2,X\times Y}\ge\cdots$ as the singular values of $\F_{X\times Y}$.
	
	In this notation, we state an analogue of rSVD for HS operators \cite[Thm.~1]{Boulle2022a} with respect to the operator norm. We improve the error bound by a factor of $k$ compared to \citet[Thm.~1]{Boulle2022a}.
	
	\begin{theorem}\label{thm:rSVD-op}
		Let $D_1,D_2\subseteq\R^d$ be domains with $d\ge1$, and let $\F:L^2(D_1)\to L^2(D_2)$ be a HS operator with singular values $\sigma_1\ge\sigma_2\ge\dots\ge0$. Select a target rank $k\ge2$, an oversampling parameter $p\ge2$, and a $D_1\times(k+p)$ quasimatrix $\mathbf\Omega$ such that each column is independently drawn from $\GP(0,K)$, where $K:D_1\times D_1\to\R$ is a continuous symmetric positive definite kernel with eigenvalues $\lambda_1\ge\lambda_2\ge\dots>0$. Set $\bY=\F\mathbf\Omega$. Then 
		\begin{equation}\label{eq:rSVD-op-E}
			\E\|\F - \bP_\bY\F\| 
			\le \left(1 + \frac{1}{\xi_k} 
				+ \sqrt{\frac{\Tr(K)}{\lambda_1\xi_k}}\cdot \frac{e \sqrt{k+p}}{p}
				+ \sqrt{\frac{k}{\gamma_k(p+1)}}\,\right) \sigma_{k+1},
		\end{equation}
		where $\gamma_k,\xi_k$ are defined in \eqref{eq:gamma-eta}. Moreover, if $p\ge4$, then for any $s,t\ge1$, we have
		\begin{equation}\label{eq:rSVD-op-P}
			\|\F - \bP_\bY\F\| 
			\le \left[1 + \frac{1}{\xi_k} + \frac{e}{\sqrt{\xi_k}}
				\left(s + \sqrt{\frac{\Tr(K)}{\lambda_1}}\right) \frac{\sqrt{k+p}}{p+1} \cdot t
				+ \sqrt{\frac{k}{\gamma_k(p+1)}} \cdot t\,\right] \sigma_{k+1}
		\end{equation}
		with probability $\ge 1-2t^{-p}-e^{-s^2/2}$.
	\end{theorem}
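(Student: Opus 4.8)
The plan is to follow the structure of the matrix rSVD analysis in \citet{Halko2011}, adapted to the HS / Gaussian-process setting as in \citet{Boulle2022a}, but keeping track carefully of the dimensional factors so as to shave off the extra $\sqrt k$. First I would reduce to a deterministic error bound à la \citet[Thm.~9.1]{Halko2011}. Writing $\bOmega_1 := \bV_1^*\bOmega$ and $\bOmega_2 := \bV_2^*\bOmega$ for the ``projections'' of the test quasimatrix onto the dominant and tail right singular subspaces, one shows, exactly as in the finite-dimensional proof, that whenever $\bOmega_1$ has full row rank,
\begin{equation*}
	\|\F - \bP_\bY\F\|^2 \le \|\bSigma_2\|^2 + \|\bSigma_2\bOmega_2\bOmega_1^\dagger\|^2,
\end{equation*}
where $\|\bSigma_2\| = \sigma_{k+1}$. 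This step is purely linear-algebraic and transfers verbatim to quasimatrices, using Proposition \ref{prop:orthogonal-proj} with $q$ large to pass from the operator norm of $\bP_\bY\F$ to a controllable spectral quantity, together with the SVE \eqref{eq:Ff-SVE} and the splitting $\F = \bU_1\bSigma_1\bV_1^* + \bU_2\bSigma_2\bV_2^*$.

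The heart of the argument is then to bound $\E\|\bSigma_2\bOmega_2\bOmega_1^\dagger\|$ (and its tail). The key distributional fact, from \citet[\S3]{Boulle2022a}, is that because the columns of $\bOmega$ are drawn from $\GP(0,K)$, the pair $(\bOmega_1,\bOmega_2)$ behaves like a Gaussian matrix with a nontrivial but explicitly controlled covariance: conditionally on $\bOmega_1$, the ``tail'' block $\bOmega_2$ is Gaussian, and the relevant covariances along the dominant subspace are governed precisely by $\bC_{11}$, hence by $\gamma_k$ and $\xi_k$ via \eqref{eq:gamma-eta}. Splitting $\|\bSigma_2\bOmega_2\bOmega_1^\dagger\|$ through a conditioning argument — first condition on $\bOmega_1$, bound the conditional expectation of the spectral norm of a Gaussian matrix with a deterministic left factor $\bSigma_2$ and right factor $\bOmega_1^\dagger$, then take expectation over $\bOmega_1$ — gives two contributions: one from $\|\bSigma_2\|\cdot\|\bOmega_1^\dagger\|$ and one from $\|\bSigma_2\|_\HS\cdot\|\bOmega_1^\dagger\|$. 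The first contributes the $1/\xi_k$ term and the second, after using $\|\bSigma_2\|_\HS \le \sqrt{\Tr(K)/\lambda_1}\,\sigma_{k+1}$-type bounds coming from the decay of the kernel eigenvalues, contributes the $\sqrt{\Tr(K)/(\lambda_1\xi_k)}\cdot e\sqrt{k+p}/p$ term; the $\sqrt{k/(\gamma_k(p+1))}$ term comes from the expected Frobenius-type norm of $\bOmega_1^\dagger$. The saving of $\sqrt k$ over \citet[Thm.~1]{Boulle2022a} comes from being careful here: instead of bounding $\E\|\bSigma_2\bOmega_2\bOmega_1^\dagger\|$ by $\E\|\bSigma_2\bOmega_2\|\,\|\bOmega_1^\dagger\|$ and then using a crude $\|\bSigma_2\bOmega_2\| \le \|\bSigma_2\|_\HS \cdot (\text{something})$, one keeps the operator norm on the $\bSigma_2\bOmega_2$ factor and invokes the sharp Gaussian-matrix bound $\E\|\bG_1\bM\bG_2^*\| \lesssim \|\bM\| (\sqrt{m}+\sqrt{n}) + \|\bM\|_\HS$ (Chevet-type inequality), which is exactly the mechanism that produces the $\sqrt k$ rather than $k$ in the matrix case.

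For the probability bound \eqref{eq:rSVD-op-P} with $p\ge4$, I would replace the expectation estimates by their concentration counterparts: a Gaussian concentration inequality for the Lipschitz function $\bOmega_2 \mapsto \|\bSigma_2\bOmega_2\bOmega_1^\dagger\|$ (giving the $e^{-s^2/2}$ term and the additive $s$ inside the bracket), and a tail bound for $\|\bOmega_1^\dagger\|$ — i.e., for the smallest singular value of a Gaussian-type matrix with covariance controlled by $\bC_{11}$ — which for $p\ge4$ has a clean inverse-moment tail of the form $t^{-p}$ (this is why $p\ge4$ is needed, to guarantee integrability of $\|\bOmega_1^\dagger\|^2$ and a usable tail). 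Combining the two failure events by a union bound yields the stated $1 - 2t^{-p} - e^{-s^2/2}$. The main obstacle I anticipate is the conditioning step in the non-i.i.d.\ Gaussian-process setting: one must verify that conditionally on $\bOmega_1$ the tail block $\bOmega_2$ is still Gaussian with a covariance whose operator norm and trace are dominated by $\Tr(K)$ and $\lambda_1$ in the right way, and that the cross-covariance between $\bOmega_1$ and $\bOmega_2$ does not spoil the independence-style estimates — this is precisely where the structure of $\bC$ (its being a principal submatrix of $\bV^*K\bV$, with eigenvalues dominated by those of $K$) must be used, and where the definitions \eqref{eq:gamma-eta} earn their keep.
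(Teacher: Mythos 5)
Your overall blueprint — split $\bV^*\bOmega$ into $(\bOmega_1,\bOmega_2)$, reduce to $\|\bSigma_2\bOmega_2\bOmega_1^\dagger\|$, condition on $\bOmega_1$, apply a Gordon/Chevet-type operator-norm bound for the conditional Gaussian, and finish with Gaussian concentration plus a pseudoinverse tail bound — is the same route the paper takes, and you correctly identify that the $\sqrt k$ improvement comes from retaining the \emph{operator} norm of the central Gaussian block. But two genuine gaps remain. Because the columns of $\bOmega$ are drawn from $\GP(0,K)$ with a nondiagonal $\bC=\bV^*K\bV$, the conditional law of $\bOmega_2$ given $\bOmega_1$ has a \emph{nonzero mean} $\bC_{21}\bC_{11}^{-1}\bOmega_1$, not merely a Schur-complement covariance. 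The mean shift contributes $\E\|\bSigma_2\bC_{21}\bC_{11}^{-1}\bOmega_1\bOmega_1^\dagger\|\le\lambda_1\|\bC_{11}^{-1}\|\,\sigma_{k+1}=\xi_k^{-1}\sigma_{k+1}$ (using $\bOmega_1\bOmega_1^\dagger=\bI_k$), which is exactly the standalone $1/\xi_k$ term in \eqref{eq:rSVD-op-E}--\eqref{eq:rSVD-op-P}. Your conditioning argument never accounts for this shift, and the two ``conditional variance'' contributions you describe cannot produce it.

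Second, the inequality $\|\bSigma_2\|_\HS\le\sqrt{\Tr(K)/\lambda_1}\,\sigma_{k+1}$ you invoke is false: $\|\bSigma_2\|_\HS$ is the $\ell^2$-tail of the singular values of $\F$, which has nothing to do with the eigenvalues of $K$ and can be arbitrarily larger than $\sigma_{k+1}$. In the correct estimate, $\bSigma_2$ only ever contributes through its operator norm $\|\bSigma_2\|=\sigma_{k+1}$; the $\sqrt{\Tr(K)}$ factor instead comes from the covariance of the conditional Gaussian, via $\|(\bC_{22}-\bC_{21}\bC_{11}^{-1}\bC_{12})^{1/2}\|_\HS\le\|\bC^{1/2}\|_\HS=\sqrt{\Tr(K)}$. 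Proving the asymmetric Gordon-type bound in which the outer deterministic factor contributes its operator norm, the covariance contributes its trace and top eigenvalue, and only $\bOmega_1^\dagger$ contributes both its operator and Frobenius norms is the actual content of the improvement (Proposition~\ref{prop:gaussian-op-norm-E}); without that specific asymmetry you do not recover the stated $\sqrt{k+p}$. As a minor point, the reduction to $\|\bSigma_2\|+\|\bSigma_2\bOmega_2\bOmega_1^\dagger\|$ is a direct linear-algebraic consequence of the SVE and the structure of $\bP_\bY$; it does not use Proposition~\ref{prop:orthogonal-proj}, which belongs to the power scheme of Theorem~\ref{thm:rSVD-power}.
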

	\begin{proof}
		See \hyperref[appendix-B]{Appendix B}.
	\end{proof}
	
	We also present a simplified version of the probability bound in \eqref{eq:rSVD-op-P}.
	
	\begin{corollary}\label{cor:rSVD-simple}
		Under the assumptions of Theorem \ref{thm:rSVD-op} with $k=p\ge 4$, we have
		\begin{equation}\label{eq:rSVD-simple-P}
			\|\F - \bP_\bY\F\|
			\le \left[1 + \frac{1}{\xi_k}\left(19 + 11\sqrt{\frac{\Tr(K)}{\lambda_1 k}}\right)\right]
				\sigma_{k+1}
		\end{equation}
		with probability $\ge 1 - 3e^{-k}$.
	\end{corollary}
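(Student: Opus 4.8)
The plan is to derive \eqref{eq:rSVD-simple-P} by specializing the high-probability bound \eqref{eq:rSVD-op-P} of Theorem \ref{thm:rSVD-op} to $k = p$ and choosing the free parameters $s,t \ge 1$ so that the failure probability collapses to $3e^{-k}$ while keeping all numerical constants below the thresholds $19$ and $11$.

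First I would set $t = e$ and $s = \sqrt{2k}$; both are at least $1$ since $k \ge 4$. With these choices the failure probability in \eqref{eq:rSVD-op-P} is $2t^{-p} + e^{-s^2/2} = 2e^{-k} + e^{-k} = 3e^{-k}$, which is the stated probability of failure. It then remains to bound the bracketed factor multiplying $\sigma_{k+1}$.

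Substituting $k = p$, $t = e$, $s = \sqrt{2k}$, the third summand in the bracket of \eqref{eq:rSVD-op-P} equals $\frac{e^2}{\sqrt{\xi_k}}\bigl(\frac{2k}{k+1} + \frac{\sqrt{2k}}{k+1}\sqrt{\Tr(K)/\lambda_1}\bigr)$. Using $\frac{2k}{k+1} \le 2$ and writing $\sqrt{\Tr(K)/\lambda_1} = \sqrt{k}\,\sqrt{\Tr(K)/(\lambda_1 k)}$, so that $\frac{\sqrt{2k}\,\sqrt{k}}{k+1} = \sqrt{2}\,\frac{k}{k+1} \le \sqrt{2}$, this summand is at most $\frac{2e^2}{\sqrt{\xi_k}} + \frac{\sqrt{2}\,e^2}{\sqrt{\xi_k}}\sqrt{\Tr(K)/(\lambda_1 k)}$. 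The fourth summand $\sqrt{\frac{k}{\gamma_k(k+1)}}\,e$ is at most $e/\sqrt{\gamma_k}$. Now I would invoke the elementary inequalities $\xi_k \le \gamma_k \le 1$: the first holds because $\bC_{11}^{-1}$ is a $k \times k$ positive definite matrix, so $\Tr(\bC_{11}^{-1}) \le k\,\|\bC_{11}^{-1}\|$ and hence $\gamma_k = k/(\lambda_1\Tr(\bC_{11}^{-1})) \ge 1/(\lambda_1\|\bC_{11}^{-1}\|) = \xi_k$, while the second is noted in the text. These let me replace each of $1/\sqrt{\xi_k}$ and $1/\sqrt{\gamma_k}$ by $1/\xi_k$. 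Collecting the four summands, the bracket is at most $1 + \frac{1}{\xi_k}\bigl((1 + e + 2e^2) + \sqrt{2}\,e^2\sqrt{\Tr(K)/(\lambda_1 k)}\bigr)$, and since $1 + e + 2e^2 < 19$ and $\sqrt{2}\,e^2 < 11$, the claim follows.

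There is no genuine obstacle here; the entire argument is careful bookkeeping of constants. The only two points that require attention are (i) checking that $s = \sqrt{2k}$ and $t = e$ simultaneously force the two exponential tails down to $3e^{-k}$ and leave the resulting numerical constants under $19$ and $11$, and (ii) using $\xi_k \le \gamma_k \le 1$ so that the final bound can be expressed solely through $\xi_k$.
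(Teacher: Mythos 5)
Your proof is correct and follows essentially the same route as the paper: both choose $s=\sqrt{2k}$, $t=e$ to collapse the failure probability to $3e^{-k}$, and both use $1/\sqrt{\gamma_k}\le 1/\sqrt{\xi_k}\le 1/\xi_k$ (your $\xi_k\le\gamma_k\le 1$) to consolidate the bound in terms of $\xi_k$. The paper's proof is terser, leaving the arithmetic implicit, whereas you spell out the constant-tracking $1+e+2e^2<19$ and $\sqrt{2}\,e^2<11$, which is exactly what the paper intends.
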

	\begin{proof}
		We evaluate \eqref{eq:rSVD-op-P} by selecting $s=\sqrt{2k}$ and $t=e$. We also use the inequalities
		$$\frac{1}{\sqrt{\gamma_k}} \le \frac{1}{\sqrt{\xi_k}} \le \frac{1}{\xi_k},$$
		which follow from the fact that $0<\gamma_k,\xi_k\le 1$ and $\Tr(\bC_{11}^{-1}) \le k\|\bC_{11}^{-1}\|$.
	\end{proof}
	
	For convenience, we abbreviate the error factors in \eqref{eq:rSVD-op-P} and \eqref{eq:rSVD-simple-P} by
	\begin{align}
		A_{k,p}(s,t) &:= 1 + \frac{1}{\xi_k} + \frac{e}{\sqrt{\xi_k}}
			\left(s + \sqrt{\frac{\Tr(K)}{\lambda_1}}\right) \frac{\sqrt{k+p}}{p+1} \cdot t
			+ \sqrt{\frac{k}{\gamma_k(p+1)}} \cdot t, \label{eq:A} \\
		A_k &:= 1 + \frac{1}{\xi_k}\left(19 + 11\sqrt{\frac{\Tr(K)}{\lambda_1 k}}\right)
			\label{eq:A-simple}.
	\end{align}
	as well as the analogue
	\begin{equation}\label{eq:A-XY}
		A_{k,X\times Y} = 1 + \frac{1}{\xi_{k,X\times Y}}
			\left(19 + 11\sqrt{\frac{\Tr(K)}{\lambda_1 k}}\right)
	\end{equation}
	when considering a restricted domain $X\times Y\subset D_T\times D_T$.

	A ``power scheme'' version of rSVD, as described in \citet{Halko2011} and \citet{Rokhlin2009}, allows one to drive down the multiplicative factor in the probabilistic estimate \eqref{eq:rSVD-op-P} at the expense of a logarithmic factor increase in the number of operator-function products. The idea of the power scheme is that repeated projections of $\bY=\F\bOmega$ onto the column space of $\F$ improves the approximation.
	
	\begin{theorem}\label{thm:rSVD-power}
		Under the same assumptions as Theorem \ref{thm:rSVD-op}, select an integer $q\ge0$. Let $\sH=(\F\F^*)^q\F$ and set $\bZ=\sH\bOmega$. If $p\ge4$, then
		\begin{equation}\label{eq:rSVD-power-P}
			\|\F - \bP_\bZ\F\| \le A_{k,p}(s,t)^{1/(2q+1)} \sigma_{k+1}
		\end{equation}
		with probability $\ge 1 - 2t^{-p} - e^{-s^2/2}$, where $A_{k,p}(s,t)$ is defined in \eqref{eq:A}.
	\end{theorem}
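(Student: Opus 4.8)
The plan is to deduce the bound from the non-powered estimate of Theorem \ref{thm:rSVD-op}, applied to the operator $\sH=(\F\F^*)^q\F$ rather than to $\F$, and then to use Proposition \ref{prop:orthogonal-proj} at the very end to undo the power. The starting observation, immediate from the SVE \eqref{eq:Ff-SVE} and the formulas \eqref{eq:F*F-FF*}, is that
\[
	\sH f = (\F\F^*)^q\F f = \sum_{\substack{j=1\\\sigma_j>0}}^\infty \sigma_j^{2q+1}\brak{v_j,f}\,e_j,
	\qquad f\in L^2(D_1),
\]
so $\sH$ is a compact operator that shares its left and right singular functions $e_j,v_j$ with $\F$ and has singular values $\sigma_j(\sH)=\sigma_j^{2q+1}$. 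Moreover $\sH$ is Hilbert--Schmidt, since $\sum_j\sigma_j^{2(2q+1)}\le\sigma_1^{4q}\sum_j\sigma_j^2=\sigma_1^{4q}\|\F\|_\HS^2<\infty$, so Theorem \ref{thm:rSVD-op} legitimately applies to it.

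The crucial point is that the matrix $\bC$ of \eqref{eq:C}, and therefore the quantities $\gamma_k,\xi_k$ of \eqref{eq:gamma-eta} and the factor $A_{k,p}(s,t)$ of \eqref{eq:A}, depend only on the covariance kernel $K$ and on the right singular functions of the operator, both of which are unchanged in passing from $\F$ to $\sH$. Consequently, applying Theorem \ref{thm:rSVD-op} to $\sH$ with the same Gaussian quasimatrix $\bOmega$ and $\bZ=\sH\bOmega$ yields, for $p\ge4$ and any $s,t\ge1$,
\[
	\|\sH - \bP_\bZ\sH\| \le A_{k,p}(s,t)\,\sigma_{k+1}(\sH) = A_{k,p}(s,t)\,\sigma_{k+1}^{2q+1}
\]
with probability at least $1-2t^{-p}-e^{-s^2/2}$, and with exactly the factor $A_{k,p}(s,t)$ that appears in the statement.

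To conclude, for $q\ge1$ I would apply Proposition \ref{prop:orthogonal-proj} with the orthogonal projector $\bP:=\bI-\bP_\bZ$, which is bounded, self-adjoint, and idempotent and hence an orthogonal projector in the required sense. Since $\bP\F=\F-\bP_\bZ\F$ and $\bP(\F\F^*)^q\F=\sH-\bP_\bZ\sH$, the proposition gives, on the same high-probability event,
\[
	\|\F-\bP_\bZ\F\| \le \|\sH-\bP_\bZ\sH\|^{1/(2q+1)}
	\le \bigl(A_{k,p}(s,t)\,\sigma_{k+1}^{2q+1}\bigr)^{1/(2q+1)}
	= A_{k,p}(s,t)^{1/(2q+1)}\,\sigma_{k+1},
\]
which is the claim; the case $q=0$ is precisely Theorem \ref{thm:rSVD-op}. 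I do not expect a genuine obstacle here. The only point that requires a careful word is the invariance of $\bC$ --- and hence of $\gamma_k$, $\xi_k$, and $A_{k,p}(s,t)$ --- under $\F\mapsto\sH$, which is exactly what keeps the multiplicative constant from degrading as $q$ grows and is the whole reason the power scheme is worthwhile; the remaining verifications (that $\sH$ is Hilbert--Schmidt and that $\bI-\bP_\bZ$ qualifies as an orthogonal projector) are routine.
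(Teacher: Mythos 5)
Your proof is correct and takes essentially the same route as the paper: apply Theorem \ref{thm:rSVD-op} to $\sH=(\F\F^*)^q\F$ (whose singular values are $\sigma_j^{2q+1}$), then use Proposition \ref{prop:orthogonal-proj} with the projector $\bI-\bP_\bZ$ to convert the bound on $\|\sH-\bP_\bZ\sH\|$ into one on $\|\F-\bP_\bZ\F\|$. Your explicit remark that $\bC$, and hence $\gamma_k$, $\xi_k$, and $A_{k,p}(s,t)$, are unchanged under $\F\mapsto\sH$ because the right singular functions are preserved is a point the paper leaves implicit but is indeed what makes the constant independent of $q$.
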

	\begin{proof}
		See \hyperref[appendix-B]{Appendix B}.
	\end{proof}
	
	We summarize how the rSVD generates an approximant for the operator $\F$ in Algorithm \ref{alg:rSVD}, following \citet{Halko2011}.
	
	\begin{algorithm}
		\caption{Approximating $\F$ via rSVD}\label{alg:rSVD}
		\begin{algorithmic}[1]
			\Require HS operator $\F$, GP covariance kernel $K$, target rank $k\ge4$,
				oversampling parameter $p\ge2$, exponent $q\ge0$, additional parameters $s,t\ge1$
			\Ensure Approximation $\tilde\F$ of $\F$ within relative error given by 
				\eqref{eq:rSVD-power-P}
			\State Draw a $D_T\times(k+p)$ random quasimatrix $\bOmega$ with independent columns from $\GP(0,K)$ \\
				\Comment{See Section \ref{ss:GP} on how to draw such a quasimatrix}
			\State Construct $\bZ = (\F\F^*)^q\F\bOmega$ by multiplying with $\F$ and $\F^*$
			\State Compute the projector $\bP_\bZ$ via a QR factorization of $\bZ$ \\
				\Comment{See \citet{Trefethen2010} for Householder triangularization on quasimatrices}
			\State Form $\tilde\F = \bP_\bZ\F$
		\end{algorithmic}
	\end{algorithm} 
	
	\begin{remark}[Noisy training data]\label{rmk:noise}
		Here, we quantify the quality of the training data via the terms $\gamma_k$ and $\xi_k$, defined in \eqref{eq:gamma-eta}, which measure the deviation of the eigenspaces of the chosen covariance kernel $K$ from the dominant right singular subspaces of $\F$. One can also consider training data quality in the form of noise---namely, the presence of random additive perturbation errors arising from the computation of operator-function products, or from the collection of input-output data. In fact, it was proven in the finite-dimensional setting that the rSVD is stable with respect to noise in the input-output data \citep[Supp.~Info., \S 2.B]{Boulle2023}, so we assume noiseless data for simplicity.
	\end{remark}

	\subsection{Approximating singular values via singular subspaces}\label{ss:rSVA}
	
	Using the power scheme, we can not only improve the approximation of the operator $\F$ itself but also approximate its singular subspaces. Consequently, we can obtain excellent estimates for the singular values, which in conjunction with Theorem \ref{thm:rSVD-op} tells us about the numerical rank of $\F$. Notice that while Weyl's theorem \citep[Cor.~4.10]{Stewart1990} gives a straightforward bound on an operator's singular values, applying it meaningfully requires those singular values to decay quickly. Conversely, Theorem \ref{thm:rSVA} says that we can approximate the singular values regardless of their decay rate at the cost of additional operator-function products. We emphasize that our argument assumes the existence of a gap between adjacent singular values, although we believe this assumption is not necessary in principle.

	\begin{theorem}\label{thm:rSVA}
		Assume the hypotheses of Theorem \ref{thm:rSVD-op}, but with $\sigma_k>\sigma_{k+1}$. Select an integer $q\ge0$ and set $\delta_q$ as in \eqref{eq:rSVA-lem-q}. Let $\sH := (\F\F^*)^q\F$, $\bZ := \sH\bOmega$, and $\tilde\sH := \bP_\bZ\sH$. Let $\tilde\bU_k$ be the $D_1\times k$ quasimatrix whose columns are dominant left $k$-singular vectors of $\tilde\sH$. If $\delta_q A_{k,p}(s,t) < 1$, then
		\begin{equation}\label{eq:rSVA}
			\max_{1\le j\le k} |\sigma_j - \sigma_j(\tilde\bU_k^*\F)| 
				\le \frac{2\delta_q A_{k,p}(s,t)}{1 - \delta_q A_{k,p}(s,t)} \|\F\|
		\end{equation}
		with probability $\ge 1 - 2t^{-p} + e^{-s^2/2}$.
	\end{theorem}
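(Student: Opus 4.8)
The plan is to transfer the operator-norm rSVD guarantee from $\F$ to the power operator $\sH=(\F\F^*)^q\F$, plug the resulting bound into Lemma~\ref{lem:rSVA-0} to control the canonical angles between the true and computed dominant left $k$-singular subspaces, and then convert that angle bound into a singular-value bound by a Weyl-type perturbation estimate.

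The key preliminary observation is that $\sH$ has the singular value expansion $\sH f=\sum_{j}\sigma_j^{2q+1}\brak{v_j,f}e_j$; in particular $\sH$ is HS with $(k+1)$th singular value $\sigma_{k+1}^{2q+1}$, and its left and right singular functions coincide with those of $\F$. Hence the matrix $\bC$ of \eqref{eq:C}, and therefore $\gamma_k$, $\xi_k$, and the error factor $A_{k,p}(s,t)$ of \eqref{eq:A}, are identical whether computed for $\F$ or $\sH$. Since $\bZ=\sH\bOmega$ and $\tilde\sH=\bP_\bZ\sH$, I would apply Theorem~\ref{thm:rSVD-op} with $\F$ replaced by $\sH$ (the hypotheses on $\bOmega$ and $K$ are untouched) to obtain
\[
\|\sH-\tilde\sH\|=\|\sH-\bP_\bZ\sH\|\le A_{k,p}(s,t)\,\sigma_{k+1}^{2q+1},
\]
with probability at least $1-2t^{-p}-e^{-s^2/2}$, and then condition on this event, after which everything is deterministic.

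On this event I would invoke Lemma~\ref{lem:rSVA-0} with $C=A_{k,p}(s,t)$: the hypothesis $C\delta_q<1$ is exactly the assumed $\delta_qA_{k,p}(s,t)<1$, and $\tilde\sH=\bP_\bZ\sH$ is finite-rank (of rank at most $k+p$) with rank at least $k$, since, as in the proof of that lemma, $\|\sH-\tilde\sH\|<\delta_q^{-1}\sigma_{k+1}^{2q+1}=\sigma_k^{2q+1}-\sigma_{k+1}^{2q+1}<\sigma_k^{2q+1}$, so Weyl's theorem forces $\tilde\sigma_k>\sigma_{k+1}^{2q+1}\ge0$. Writing $\cU_k$ for the dominant left $k$-singular subspace of $\F$ (equal to that of $\sH$) and $\tilde\cU_k$ for the column space of $\tilde\bU_k$ (an honest $k$-dimensional subspace because $\tilde\sigma_k>0$), Lemma~\ref{lem:rSVA-0} gives $\|\sin\bTheta(\cU_k,\tilde\cU_k)\|\le A_{k,p}(s,t)\,\delta_q/(1-A_{k,p}(s,t)\,\delta_q)$.

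The last step is to pass from subspaces to singular values. Since $\tilde\bU_k$ has orthonormal columns, $\bP_{\tilde\cU_k}\F=\tilde\bU_k(\tilde\bU_k^*\F)$ has the same nonzero singular values as $\tilde\bU_k^*\F$, so $\sigma_j(\tilde\bU_k^*\F)=\sigma_j(\bP_{\tilde\cU_k}\F)$ for $1\le j\le k$; likewise $\bP_{\cU_k}\F=\F_k$, so $\sigma_j(\bP_{\cU_k}\F)=\sigma_j$ for $1\le j\le k$. As both operators have rank at most $k$, Weyl's theorem and submultiplicativity give, for each such $j$,
\[
|\sigma_j-\sigma_j(\tilde\bU_k^*\F)|\le\|\bP_{\cU_k}\F-\bP_{\tilde\cU_k}\F\|\le\|\bP_{\cU_k}-\bP_{\tilde\cU_k}\|\,\|\F\|\le 2\|\sin\bTheta(\cU_k,\tilde\cU_k)\|\,\|\F\|,
\]
where the last inequality uses $\bP_{\cU_k}-\bP_{\tilde\cU_k}=\bP_{\cU_k}(\bI-\bP_{\tilde\cU_k})-(\bI-\bP_{\cU_k})\bP_{\tilde\cU_k}$ together with the fact that $\cU_k$ and $\tilde\cU_k$ are equidimensional. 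Combining with the angle bound yields the claim. The only genuinely delicate point is this final infinite-dimensional subspace perturbation---ensuring that $\tilde\cU_k$ is truly $k$-dimensional so canonical angles and the projector identity make sense, and noting that $\sigma_j(\tilde\bU_k^*\F)$ depends only on the subspace $\tilde\cU_k$ and not on the chosen orthonormal basis; all the substantive work is already carried by Theorem~\ref{thm:rSVD-op} and Lemma~\ref{lem:rSVA-0}.
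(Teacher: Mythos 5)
Your proof is correct and follows essentially the same chain as the paper: apply Theorem~\ref{thm:rSVD-op} to $\sH$, feed the resulting bound into Lemma~\ref{lem:rSVA-0} to control $\|\sin\bTheta(\cU_k,\tilde\cU_k)\|$, then convert to a singular-value bound via Weyl's theorem. The one cosmetic difference is the final conversion: the paper bounds $\|\bU_k-\tilde\bU_k\|\le 2\|\sin\bTheta(\cU_k,\tilde\cU_k)\|$ via a CS-decomposition exercise (which implicitly requires the two orthonormal bases to be suitably aligned), whereas you pass through the basis-independent projectors $\bP_{\cU_k}$ and $\bP_{\tilde\cU_k}$, which sidesteps the alignment subtlety and reaches the same constant.
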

	\begin{proof}
		See \hyperref[appendix-B]{Appendix B}.
	\end{proof}
	
	\section{Recovering the Green's function}\label{s:main}
	
	In this section, we construct a global approximant of the homogeneous Green's function of a 2-variable hyperbolic PDO $\L$ of the form \eqref{eq:L} in the domain $D_T=[0,1]\times[0,1]$. Recall that $\L$ is assumed to be linear strictly hyperbolic, and self-adjoint, with coefficients satisfying $a,b,c\in\cC^1(D_T)$. We suppose that one can generate $N$ forcing terms $\{f_j\}_{j=1}^N$ drawn from a Gaussian process with continuous symmetric positive definite covariance kernel $K:D_T\times D_T\to\R$ and use them to query the associated solution operator $\F$ of $\L$, as well as its adjoint $\F^*$, to generate solutions $u_j = \F f_j$ or $u_j = \F^*f_j$.\footnote{We note that $\F^*$ is the solution operator of the adjoint Cauchy problem of \eqref{eq:problem}, meaning that $u=\F^*f$ satisfies both the adjoint PDE $\L^*u = f$ as well as homogeneous conditions at the boundary and at the \emph{terminal} time $t=1$. In other words, the adjoint problem is the backward-in-time version of \eqref{eq:problem}. In particular, $\F$ is self-adjoint if and only if $\L$ is self-adjoint and the coefficients $a,b,c$ are time-independent.\label{fn:adjoint}} We derive a bound on the number of input-output pairs $\{(f_j,u_j)\}_{j=1}^N$ needed to approximate the Green's function within a given error tolerance measured in the operator norm, with a high probability of success. Our result is summarized in the following theorem.

	\begin{theorem}\label{thm:main}
		Let $\L$ be a hyperbolic PDO given in \eqref{eq:L}. Let $G:D_T\times D_T\to\R$ be the homogeneous Green's function of $\L$ in the domain $D_T$, and let $\F:L^2(D_T)\to L^2(D_T)$ be the solution operator with kernel $G$, as in \eqref{eq:G}. Additionally, define $\Xi_\epsilon$ and $\Psi_\epsilon$ as in \eqref{eq:Xi} and \eqref{eq:Psi}. For any sufficiently small $\epsilon>0$ such that $\Psi_\epsilon>0$, there exists a randomized algorithm that can construct an approximation $\tilde\F$ of $\F$ using $O(\Psi_\epsilon^{-1}\epsilon^{-7}\log(\Xi_\epsilon^{-1}\epsilon^{-1}))$ input-output training pairs of $\L$, such that
		$$\|\F - \tilde\F\| = O(\Xi_\epsilon^{-1}\epsilon)\|\F\|$$
		with probability $\ge 1 - O(e^{-1/\epsilon})$.
	\end{theorem}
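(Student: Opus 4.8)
The plan is to build the approximant $\tilde\F$ by an adaptive hierarchical partition of $D_T\times D_T$ that mimics the singularity structure of $G$ described in Section~\ref{s:Green}: the diagonal and the piecewise-$\cC^r$ characteristic manifold $Z$ of \eqref{eq:Z-boundary} are numerically high-rank, while the complement is $\cC^r$ and hence numerically low-rank with the explicit decay \eqref{eq:approx-sv}. Concretely, I would partition $D_T\times D_T$ into a dyadic grid of boxes at level $\ell$, each of side $2^{-\ell}$, and on each box $X\times Y$ run Algorithm~\ref{alg:rSVD} with a modest target rank $k$ and power parameter $q$ to obtain a candidate low-rank approximant $\tilde\F_{X\times Y}=\bP_\bZ\F_{X\times Y}$, together with the singular-value estimates $\sigma_j(\tilde\bU_k^*\F_{X\times Y})$ from Theorem~\ref{thm:rSVA}. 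These estimates furnish the rank-detection test: if the estimated $\sigma_{k+1}$ is below a threshold proportional to $\epsilon$ (relative to $\|\F\|$), the box is ``accepted'' and we keep $\tilde\F_{X\times Y}$; otherwise the box is flagged as intersecting the diagonal or a characteristic and is subdivided. Because $Z$ is a finite union of $3$-dimensional $\cC^r$-manifolds with boundary in the $4$-dimensional $D_T\times D_T$, the volume of a $\delta$-tube around $Z\cup\{\text{diagonal}\}$ is $O(\delta)$ (property~3 in the introduction), so the number of boxes ever flagged at level $\ell$ is $O(2^{3\ell})$ rather than $O(2^{4\ell})$; this is the geometric fact that makes the total work controlled. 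The recursion is stopped at a finest level $\ell^*\sim\log(1/\epsilon)$, where any still-unresolved boxes are handled by a crude bound using their small area and $\|G\|_{L^\infty}$, contributing only $O(\epsilon)$ in the operator norm (via the Hilbert--Schmidt bound $\|\F\|\le\|\F\|_\HS=\|G\|_{L^2}$ restricted to a set of measure $O(\epsilon)$).

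Next I would assemble the error. On each accepted box the local error is $\|\F_{X\times Y}-\tilde\F_{X\times Y}\|\le A_{k,X\times Y}\,\sigma_{k+1,X\times Y}$ by Corollary~\ref{cor:rSVD-simple}/Theorem~\ref{thm:rSVD-power}; choosing $k$ as a slowly growing function of $\ell$ (driven by \eqref{eq:approx-sv}, so $k\sim(1/\epsilon)^{1/(r+1)}$ up to log factors after accounting for the rescaling of total variation on a box of side $2^{-\ell}$) makes this $O(\epsilon)$ per box after multiplying by the error factor $A_{k,X\times Y}$, which is where the quantity $\Xi_\epsilon$ enters: $\Xi_\epsilon$ is essentially the worst-case reciprocal of $\xi_{k,X\times Y}$ over the boxes in the partition, i.e.\ the worst alignment between the GP covariance kernel $K$ and the local right singular functions, and $\Psi_\epsilon$ encodes the worst-case singular-value gap $\sigma_k/\sigma_{k+1}$ needed for $\delta_q$ in \eqref{eq:rSVA-lem-q} to be usable. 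Since the accepted boxes have pairwise disjoint ``column'' and ``row'' supports within each level and the levels are $O(\log(1/\epsilon))$ in number, the global operator-norm error is bounded by $O(\log(1/\epsilon))$ times the per-box error, which is absorbed into $O(\Xi_\epsilon^{-1}\epsilon)\|\F\|$ after adjusting constants; the key structural point is that blocks at the same level with disjoint domains can be combined without the errors adding in operator norm beyond a $\sqrt{\#\text{blocks per row}}$ factor, and the adaptive partition keeps that count polynomial in $1/\epsilon$.

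Then I would count the training pairs and the failure probability. Each box uses $O(k+p)$ input--output products in Algorithm~\ref{alg:rSVD}, plus $O(k)$ more for the singular-value estimates of Theorem~\ref{thm:rSVA}, all with $q=O(1/\Psi_\epsilon)$ power iterations (to force $\delta_q A_{k,p}<1$ given only the gap encoded by $\Psi_\epsilon$), so the per-box cost is $O(\Psi_\epsilon^{-1} k)$; summing $k\sim\epsilon^{-1/(r+1)}\,\polylog$ over the $O(\epsilon^{-3/(r+1)})$-or-so flagged-and-subdivided boxes and the $O(\epsilon^{-c})$ accepted boxes, and being deliberately lossy in the exponents, yields the stated $O(\Psi_\epsilon^{-1}\epsilon^{-7}\log(\Xi_\epsilon^{-1}\epsilon^{-1}))$ bound (the $\epsilon^{-7}$ is a crude catch-all that also covers the worst case $r=1$). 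For the probability: apply Corollary~\ref{cor:rSVD-simple} and Theorem~\ref{thm:rSVA} on each box with $k=p$ and the union bound over the $\poly(1/\epsilon)$ boxes; each box fails with probability $O(e^{-k})=O(e^{-1/\epsilon^{1/(r+1)}})$, and choosing the finest-level rank so that $k\gtrsim 1/\epsilon$ on the smallest boxes (or simply inflating $k$ logarithmically) makes the union bound $O(e^{-1/\epsilon})$. I expect the main obstacle to be the adaptive-partition bookkeeping: proving that the rank-detection test correctly distinguishes boxes meeting $Z\cup\{\text{diag}\}$ from boxes that are genuinely $\cC^r$ --- this requires a quantitative lower bound on the numerical rank of $\F_{X\times Y}$ when $X\times Y$ straddles a jump discontinuity of $G$ (so that the test does not prematurely accept such a box), which must be played off against the $\cC^r$ upper bound \eqref{eq:approx-sv} on the good boxes, and it is here that the gap hypothesis (hence $\Psi_\epsilon$) and the tube-volume estimate must be combined carefully to keep both the error and the box count under control simultaneously.
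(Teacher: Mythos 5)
Your proposal follows essentially the same path as the paper: an adaptive hierarchical partition of $D_T\times D_T$ driven by a rank-detection test built on the power-iterated rSVD (Theorems~\ref{thm:rSVD-power} and~\ref{thm:rSVA}), local low-rank approximation on green boxes and a zero approximation on red ones, the tube-volume bound around $Z$ giving $O(8^L)$ flagged boxes at level $L$, a stopping level $\sim\log(1/\epsilon)$, and a union bound over the $O(\epsilon^{-6})$ boxes. The one bookkeeping difference is the target rank: the paper fixes a single $k_\epsilon\sim\epsilon^{-1/r}$ (so $\sim\epsilon^{-1}$ at $r=1$) uniformly across all boxes, which simultaneously yields $\rank_\epsilon(\F_{X\times Y})<k_\epsilon$ off the characteristic set and the $O(e^{-1/\epsilon})$ failure probability, whereas your $k\sim\epsilon^{-1/(r+1)}$ (so $\sim\epsilon^{-1/2}$ at $r=1$) is too small for both --- you notice the probability shortfall and patch it by inflating $k$ on the finest boxes, but the cleaner route is the fixed larger $k$ up front --- and your stated per-box power-iteration count $q=O(1/\Psi_\epsilon)$ drops the $\log(\Xi_\epsilon^{-1}\epsilon^{-1})$ factor carried by \eqref{eq:q}, though your final total reinstates it.
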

	
	\begin{remark}[Increased regularity of coefficients]\label{rmk:regularity}
		The number of input-output training pairs can be reduced by assuming greater regularity of the coefficients $a,b,c$ of $\L$. In particular, if $a,b,c\in \cC^r(D_T)$ for some $r\ge1$, then  Theorem \ref{thm:main} easily generalizes, via \eqref{eq:approx-sv} and \eqref{eq:G-reg-boundary}, so that only $O(\Psi_\epsilon^{-1}\epsilon^{-(6+1/r)}\log(\Xi_\epsilon^{-1}\epsilon^{-1}))$ training pairs are needed to approximate $\F$ with relative error $O(\Xi_\epsilon^{-1}\epsilon)$. If we assume $a,b,c$ are analytic, the $\epsilon^{-1/r}$ factor can be reduced even further to $\log(\epsilon^{-1})$.
	\end{remark}
	
	A randomized algorithm that achieves Theorem \ref{thm:main} is summarized in Algorithm \ref{alg:main}, and we dedicate the remainder of this section to constructing it. We fix a sufficiently small $0<\epsilon<1/2$, so that by \eqref{eq:approx-sv} and \eqref{eq:G-reg-boundary}, there exists $k_\epsilon := \ceil{C\epsilon^{-1}} \ge 4$, where $C$ is a constant depending only on $\max_{(D_T\times D_T)\setminus Z}|\nabla G|$ and $Z$ is defined in \eqref{eq:Z-boundary}. This ensures that $\rank_\epsilon(\F_{X\times Y}) < k_\epsilon$ holds whenever $X\times Y\cap Z = \varnothing$. Here, $Z$ is the bundle of reflecting characteristic segments, defined in \eqref{eq:Z-boundary}. We also set the parameters $s=\sqrt{2k_\epsilon}$, $t=e$, and $p = k_\epsilon$, so that the probabilities of failure for Theorems \ref{thm:rSVD-op}, \ref{thm:rSVD-power}, and \ref{thm:rSVA} are bounded by $3e^{-k_{\epsilon}}$ (see Corollary \ref{cor:rSVD-simple}).
	
	\begin{algorithm}
		\caption{Learning the solution operator via input-output data}\label{alg:main}
		\begin{algorithmic}[1]
			\Require Black-box solver for $\L$, GP covariance kernel $K$, 
				tolerance $0 < \epsilon < 1/2$
			\Ensure Approximation $\tilde\F$ of the solution operator $\F$ within relative error 
				$\Xi_\epsilon^{-1}\epsilon$
			\State Set target rank $k_\epsilon \ge 4$
			\While{$\vol(D_\red(L)) > \epsilon^2$} 
				\For{$D \in \cD_\red(L)$}
					\State Partition $D$ into 16 subdomains $D_1,\dots,D_{16}$
						\Comment{Section \ref{ss:partition}}
					\For{$i=1:16$}
						\State Determine the numerical rank of $\F$ on $D_i$ (Algorithm 
							\ref{alg:rank-detection})
							\Comment{Section \ref{ss:rank-detection}}
						\If{$\F$ is numerically low-rank on $D_i$}
							\State Color $D_i$ green
							\State Approximate $\F$ on $D_i$ using the rSVD (Algorithm \ref{alg:rSVD})
								\Comment{Section \ref{ss:adm-local}}
						\Else
							\State Color $D_i$ red and add $D_i$ to $\cD_\red(L+1)$
						\EndIf
					\EndFor
				\EndFor
			\EndWhile
			\State Pad the approximant $\tilde\F$ with zeros on the remaining red subdomains
				\Comment{Section \ref{sss:weyl}}
		\end{algorithmic}
	\end{algorithm}
	
	\subsection{Rank detection scheme}\label{ss:rank-detection}
	
	Since the locations of the characteristics are unknown, we adaptively partition $D_T\times D_T$ in a hierarchical manner. The adaptive part relies on a ``rank detection scheme'' that detects the numerical rank of $\F$ in a given subdomain $X\times Y\subset D_T\times D_T$ (see Algorithm \ref{alg:rank-detection}).
	
	\begin{algorithm}
		\caption{Detecting the numerical rank of $\F$ in a subdomain}\label{alg:rank-detection}
		\begin{algorithmic}[1]
			\Require Black-box solver for $\L$, GP covariance kernel $K$, subdomain $X\times Y$, 
				tolerance $0 < \epsilon < 1$
			\Ensure Classification of numerical rank of $\F_{X\times Y}$
			\State Set target rank $k_\epsilon \ge 4$
			\State Set exponent $q_{\epsilon,X\times Y} \approx \log(\epsilon^{-1})$
			\State Construct $\tilde\sH_{X\times Y}$ using the rSVD (Algorithm \ref{alg:rSVD}) with 
				exponent parameter 1 applied to $\sH = (\F\F^*)^{q_\epsilon,X\times Y}\F$ 
			\State Compute a SVD of $\tilde\sH_{X\times Y}$ to obtain the dominant left 
				$k_\epsilon$-singular functions $\tilde\bU_{k_\epsilon,X\times Y}$
			\State Construct $\tilde\bU^*_{k_\epsilon,X\times Y}\F_{X\times Y}$ using the 
				black-box solver for $\L$
			\State Compute the singular values $\hat\sigma_{1,X\times Y} \ge \dots \ge
				\hat\sigma_{k_\epsilon,X\times Y}$ of $\tilde\bU^*_{k_\epsilon,X\times Y}\F_{X\times Y}$
			\If{$\hat\sigma_{k_\epsilon,X\times Y} < 4\epsilon\hat\sigma_{1,X\times Y}$}
				$\rank_{5\epsilon}(\F_{X\times Y}) < k_\epsilon$
			\Else \ $\rank_\epsilon(\F_{X\times Y}) \ge k_\epsilon$
			\EndIf
		\end{algorithmic}
	\end{algorithm}
	
	Similar to \citet[\S 4.1.2]{Boulle2022a}, we generate a $Y\times 2k_\epsilon$ quasimatrix $\bOmega$ such that each column is independently drawn from a Gaussian process defined on $Y$, given by $\GP(0,\cR_{Y\times Y}K)$, where $\cR_{Y\times Y}$ is the operator that restricts functions to the domain $Y\times Y$. Then we extend by zero each column of $\bOmega$ from $L^2(Y)$ to $L^2(D_T)$ in the form $\cR_Y^*\bOmega$. 
	
	Next, we select
	\begin{equation}\label{eq:q}
		q_{\epsilon,X\times Y} 
			:= \max\left(0,\left\lceil\frac12 \left(\frac{\log(1+A_{k_\epsilon,X\times Y}
			+ 2A_{k_\epsilon,X\times Y}/\epsilon)}
			{\log(\sigma_{k_\epsilon,X\times Y}/\sigma_{k_\epsilon+1,X\times Y})} - 1
			\right)\right\rceil\right),
	\end{equation}
	where $A_{k_\epsilon,X\times Y}$ is defined in \eqref{eq:A-XY}, and set $\sH = (\F\F^*)^{q_{\epsilon,X\times Y}}\F$. We approximate the range of $\sH$ via $\bZ = \sH\cR_Y^*\bOmega$ and then compute the rank-$2k_\epsilon$ approximant $\tilde\sH_{X\times Y} = \bP_{\cR_X\bZ}\cR_X\sH\cR_Y^*$. Since $\tilde\sH_{X\times Y}$ has finite rank, we can compute its SVD and extract the quasimatrix $\tilde\bU_{k_\epsilon,X\times Y}$ whose columns are the dominant left $k_\epsilon$-singular vectors of $\tilde\sH_{X\times Y}$, and finally compute the dominant singular values $\hat\sigma_{1,X\times Y}\ge\dots\ge\hat\sigma_{k_\epsilon,X\times Y}$ of the operator $\tilde\bU_{k_\epsilon,X\times Y}^*\F_{X\times Y}$. In total, these computations require $k_\epsilon(8q_{\epsilon,X\times Y}+5)$ input-output training pairs, and rely on the querying the adjoint of $\F$.
	
	Our choice of $q_{\epsilon,X\times Y}$ in \eqref{eq:q} is motivated as follows. Given $\delta_{q_{\epsilon,X\times Y}}$ as defined in \eqref{eq:rSVA-lem-q}, we have
	$$\frac{2\delta_{q_{\epsilon,X\times Y}} A_{k_\epsilon,X\times Y}}
		{1 - \delta_{q_{\epsilon,X\times Y}} A_{k_\epsilon,X\times Y}} \le \epsilon,$$
	which in conjunction with Theorem \ref{thm:rSVA} implies that the inequalities $\hat\sigma_{k_\epsilon,X\times Y} \le \sigma_{k_\epsilon,X\times Y} + \epsilon\sigma_{1,X\times Y}$ and $(1-\epsilon)\sigma_{1,X\times Y} \le \hat\sigma_{1,X\times Y}$ hold with probability $\ge 1 - 3e^{-k_\epsilon}$. If $\rank_\epsilon(\F_{X\times Y}) < k_\epsilon$, then $\sigma_{k_\epsilon,X\times Y} \le \epsilon\sigma_{1,X\times Y}$, hence
	$$\hat\sigma_{k_\epsilon,X\times Y} \le 2\epsilon\sigma_{1,X\times Y}
		\le \frac{2\epsilon}{1-\epsilon}\hat\sigma_{1,X\times Y}.$$
	Since $\epsilon<1/2$, then the right-hand side is bounded by $4\epsilon\hat\sigma_{1,X\times Y}$. On the other hand, if $\hat\sigma_{k_\epsilon,X\times Y} \le 4\epsilon\hat\sigma_{1,X\times Y}$, then Theorem \ref{thm:rSVA} again yields
	$$\sigma_{k_\epsilon,X\times Y} 
		\le \hat\sigma_{k_\epsilon,X\times Y} + \epsilon\sigma_{1,X\times Y}
		\le 5\epsilon\sigma_{1,X\times Y}$$
	hence $\rank_{5\epsilon}(\F_{X\times Y}) < k_\epsilon$. In summary, we have shown that
	\begin{equation}\label{eq:adm-chain}
	 	\rank_\epsilon(\F_{X\times Y}) < k_\epsilon \ \implies\ 
		\hat\sigma_{k_\epsilon,X\times Y} \le 4\epsilon\hat\sigma_{1,X\times Y} \ \implies\ 
		\rank_{5\epsilon}(\F_{X\times Y}) < k_\epsilon
	\end{equation}
	holds with probability $\ge 1 - 3e^{-k_\epsilon}$.
	
	The preceding argument implies that we need only to check the validity of the inequality 
	\begin{equation}\label{eq:adm-test}
		\hat\sigma_{k_\epsilon,X\times Y} < 4\epsilon\hat\sigma_{1,X\times Y}
	\end{equation}
	to determine with high probability whether or not $\F_{X\times Y}$ has numerical rank bounded by $k_\epsilon$, with low error tolerance. In particular, every subdomain that does not intersect $Z$ has $\rank_\epsilon(\F_{X\times Y}) < k_\epsilon$, and the test passes on all such subdomains. Conversely, even if \eqref{eq:adm-test} happens to be satisfied for a subdomain that does intersect $Z$, then that subdomain still has a low numerical rank with comparatively small error tolerance, namely, $5\epsilon$.

	\subsection{Hierarchical partition of domain}\label{ss:partition}
	
	We now describe the hierarchical partitioning of the domain $D_T\times D_T = [0,1]^4$, so that $\F$ is numerically low-rank on many subdomains, while the subdomains on which $\F$ is not low-rank have a small total volume. Since the probability of failure is very low, we describe the partition deterministically and relegate the discussion of the probabilistic aspects to Section \ref{sss:recovery-P}. In particular, we assume throughout this section that \eqref{eq:adm-chain} holds deterministically.
	
	Our partitioning strategy proceeds as follows. At level $L=0$, we consider only one subdomain, which is the entirety of $D_T\times D_T$. On each subdomain at the current partition level, we perform the rank detection procedure described in Section \ref{ss:rank-detection} to check if $\F$ is numerically low-rank on the subdomain. If it is, we color the subdomain green and approximate $\F$ restricted to the subdomain using the rSVD. Otherwise, we color the subdomain red. The next level of the partition then consists of partitioning each red subdomain into smaller subdomains and repeating the coloring process. Since the characteristics lie only in red subdomains, then at each level of the partition we learn the location of the characteristics with finer detail (see Figure \ref{fig:hierarchy}).
	
	The hierarchical partition of $D_T\times D_T = [0,1]^4$ for $n$ levels is defined recursively as follows.
	\begin{itemize}
		\item At level $L=0$, the domain $I_{1,1,1,1} := I_1\times I_1\times I_1\times I_1 = [0,1]^4$ is the root of the partitioning tree.
		\item At a given level $0\le L\le n-1$, a node $I_{j_1,j_2,j_3,j_4} := I_{j_1}\times I_{j_2}\times I_{j_3}\times I_{j_4}$ is colored red if \eqref{eq:adm-test} fails to hold for $X\times Y = I_{j_1,j_2,j_3,j_4}$. Otherwise, the node $I_{j_1,j_2,j_3,j_4}$ is colored green. Green nodes have no children, whereas red nodes have $2^4$ children defined as
		$$\{I_{2j_1+k_1}\times I_{2j_2+k_2}\times I_{2j_3+k_3}\times I_{2j_4+k_4} : 
			k_i\in\{0,1\},\  i=1,2,3,4\}.$$
		Here, if $I_j=[a,b]$, $0\le a<b\le 1$, then we define $I_{2j}=[a,\frac{a+b}{2}]$ and $I_{2j+1}=[\frac{a+b}{2},b]$.
	\end{itemize}
	Every subdomain $X\times Y$ in levels $0\le L\le n-1$ is green and satisfies \eqref{eq:adm-test}, hence $\rank_{5\epsilon}(\F_{X\times Y}) < k_\epsilon$, by \eqref{eq:adm-chain}. At the end, we perform the test \eqref{eq:adm-test} for all remaining subdomains at level $n$. Those for which \eqref{eq:adm-test} holds are colored green, while the rest are colored red.

	\subsection{Local approximation of the Green's function}\label{ss:adm-local}
	
	The hierarchical partition of $D_T\times D_T$ described above tells us where $\F$ is numerically low-rank and where it is not. If a subdomain $X\times Y$ is colored green, that is, a subdomain on which \eqref{eq:adm-test} holds, then we can approximate it using the rSVD. Since we have generated $\bZ = \sH\cR_Y^*\bOmega$ in the process of rank detection, where $\sH = (\F\F^*)^{q_{\epsilon,X\times Y}}\F$ and $\bOmega$ is a quasimatrix with columns drawn from $\GP(0,\cR_{Y\times Y}K)$, then we only require an additional $2k_\epsilon$ input-output training pairs to construct $\tilde\F_{X\times Y} := \bP_{\cR_X\bZ}\cR_X\F\cR_Y^*$.\footnote{In other words, we can skip lines 2--3 in Algorithm \ref{alg:rSVD}.} By Theorem \ref{thm:rSVD-power} and \eqref{eq:adm-chain}, we have
	\begin{equation}\label{eq:local-approx}
		\|\F_{X\times Y} - \tilde\F_{X\times Y}\| 
			\le A_{k_\epsilon,X\times Y}^{1/(2q_{\epsilon,X\times Y}+1)}\sigma_{k_\epsilon+1,X\times Y}
			\le 5\epsilon A_{k_\epsilon,X\times Y}^{1/(2q_{\epsilon,X\times Y}+1)}\|\F_{X\times Y}\|.
	\end{equation}
	Moreover, this error estimate holds with probability 1, conditioned on the event that \eqref{eq:adm-chain} is valid (see Section \ref{sss:recovery-P}).
	
	On the other hand, if \eqref{eq:adm-test} does not hold on $X\times Y$, then we approximate $\F$ by zero. Since the Green's function $G$is continuous in the compact domain $D_T\times D_T$, except for jumps along the characteristics and the diagonal, then there exists a constant $C$ such that
	$$G(x,t;x_0,t_0) \le C\|\F\|, \qquad (x,y,x_0,y_0)\in D_T\times D_T.$$
	Hence, for any $X,Y\subset D_T$, we have
	\begin{equation}\label{eq:HS-op-estimate}
		\|G\|_{L^2(X\times Y)}^2 \le C^2 \vol(X\times Y)\|\F\|^2
	\end{equation}
	and thus
	\begin{equation}\label{eq:inadm-approx}
		\|\F_{X\times Y}\| \le C\sqrt{\vol(X\times Y)}\|\F\|,
	\end{equation}
	where $C$ does not depend on $X\times Y$. Therefore, for sufficiently small domains $X\times Y$, we may approximate $\F_{X\times Y}$ by zero. At the end of partitioning, the total volume of the subdomains on which \eqref{eq:adm-test} fails to hold is negligible (see Section \ref{sss:weyl}).

	\subsection{Recovering the Green's function on the entire domain}
	We now show that we can recover $\F$ on the entire domain $D_T\times D_T$.

	\subsubsection{Global approximation near characteristics}\label{sss:weyl}
	
	First, we ensure that the volume of all subdomains where $\F$ is not low-rank is small enough to safely ignore that part of $\F$ by approximating it by zero. As one increases the level of hierarchical partitioning, the volume of such subdomains shrinks to zero.
	
	\begin{figure}[t]
		\centering
		\begin{subfigure}{0.45\textwidth}
			\begin{adjustbox}{max width=\textwidth}
			\begin{tikzpicture}[
				dot/.style={circle, fill=black, inner sep=0pt, minimum size=5pt, node contents={}},
				g/.style={fill=green+}, r/.style={fill=red+}]
				
				\matrix[matrix of nodes, nodes={draw, minimum size=1.5cm}, 
					nodes in empty cells, column sep=-0.4pt, row sep=-0.4pt,
					xshift=3cm, yshift=3cm](M){
					|[r]| & |[r]| & |[r]| & |[g]| \\
					|[r]| & |[r]| & |[r]| & |[r]| \\
					|[r]| & |[r]| & |[g]| & |[r]| \\
					|[g]| & |[r]| & |[r]| & |[r]| \\
				};
				
  				\draw[-latex] (0,0) -- (6.5,0) node[right] {$x$};	
				\draw[-] (0,6) -- (6,6);							
				\draw[-latex] (0,0) -- (0,6.5)	node[above] {$t$};	
				\draw[-] (6,0) -- (6,6);							
				
				\begin{scope}
					\clip (0,0) rectangle (6,6); 
					
					\draw[scale=6, domain=0.1:0.377, smooth, variable=\y, line width=0.4mm] 
						plot ({sinh(-0.903 + 2*sqrt(\y+1))-1}, \y);		
					\draw[scale=6, domain=0.377:1.1, smooth, variable=\y, line width=0.4mm]
						plot ({-sinh(-3.791 + 2*sqrt(\y+1))-1}, \y);	
					
					\draw[scale=6, domain=0.1:0.4528, smooth, variable=\y, line width=0.4mm]
						plot ({-sinh(-3.292 + 2*sqrt(\y+1))-1}, \y);	
					\draw[scale=6, domain=0.4528:1.1, smooth, variable=\y, line width=0.4mm]
						plot ({sinh(-1.529 + 2*sqrt(\y+1))-1}, \y);		
				\end{scope}
				
				\draw (3,0.6) node[dot, label={[label distance=-1mm]below:{$(x_0,t_0)$}}];
			\end{tikzpicture}
			\end{adjustbox}
			\subcaption{}\label{fig:hierarchy-1}
		\end{subfigure}
		\hfill
		\begin{subfigure}{0.45\textwidth}
			\begin{adjustbox}{max width=\textwidth}
			\begin{tikzpicture}[
				dot/.style={circle, fill=black, inner sep=0pt, minimum size=5pt, node contents={}},
				pics/gcell/.style n args={4}{code={ 
					\begin{scope}
						\clip[postaction={fill=#4, line width=#3}]
							#1 rectangle+(#2,#2);
						\draw[line width=0.8mm, dash=on 8pt off 6pt phase 2pt] 
							(4.1838,6) circle (1.05cm+0.4mm);
						\draw[scale=6, smooth, variable=\y, line width=2.1cm+1.6mm,
							dash=on 8pt off 6pt phase 2pt] 
							plot[domain=0.1:0.377] ({sinh(-0.903 + 2*sqrt(\y+1))-1}, \y)
							plot[domain=0.1:0.4528] ({-sinh(-3.292 + 2*sqrt(\y+1))-1}, \y)
							plot[domain=0.377:1] ({-sinh(-3.791 + 2*sqrt(\y+1))-1}, \y)
							plot[domain=0.4528:1] ({sinh(-1.529 + 2*sqrt(\y+1))-1}, \y);
						\draw[green+, scale=6, smooth, variable=\y, line width=2.1cm, 
							line cap=round] 
							plot[domain=0.1:0.377] ({sinh(-0.903 + 2*sqrt(\y+1))-1}, \y)
							plot[domain=0.1:0.4528] ({-sinh(-3.292 + 2*sqrt(\y+1))-1}, \y)
							plot[domain=0.377:1] ({-sinh(-3.791 + 2*sqrt(\y+1))-1}, \y)
							plot[domain=0.4528:1] ({sinh(-1.529 + 2*sqrt(\y+1))-1}, \y);	
					\end{scope}
				}}]
				
  				\draw[-latex] (0,0) -- (6.5,0) node[right] {$x$};	
				\draw[-] (0,6) -- (6,6);							
				\draw[-latex] (0,0) -- (0,6.5) node[above] {$t$};	
				\draw[-] (6,0) -- (6,6);							
				
				\clip (0,0) rectangle (6,6); 
				
				\pgfplotstableread[col sep=comma]{
					0,0,0,1,0,0,1,1
					1,1,0,0,0,1,1,1
					1,0,0,0,0,1,1,1
					0,0,1,1,0,0,0,1
					0,1,1,1,1,1,0,0
					0,0,0,1,1,1,0,0
					1,1,0,0,0,0,0,1
					1,1,1,0,0,1,1,1
				}\grid
				
  				\pgfplotstableforeachcolumn\grid\as\col {
    				\pgfplotstableforeachcolumnelement{\col}\of\grid\as\colcnt {
						\ifnum\colcnt=1
							\pic at (0,0) 
								{gcell={($(0,5.25)-\pgfplotstablerow*(0,0.75)+\col*(0.75,0)$)}
								{0.75}{0}{green+}};
						\fi
						\ifnum\colcnt=0
							\draw[fill=red+] ($(0,5.25)-\pgfplotstablerow*(0,0.75)+\col*(0.75,0)$)
								rectangle+(0.75,0.75);
						\fi
    				}
  				}
								
				\draw[step=0.75, line width=0.4pt] (0,0) grid (6,6);
				
				\pic at (0,0) {gcell={(0.2pt,0.2pt)}{1.5cm-0.4pt}{0.4pt}{green+}};
				\pic at (0,0) {gcell={(3cm+0.2pt,1.5cm+0.2pt)}{1.5cm-0.4pt}{0.4pt}{green+}};
				\pic at (0,0) {gcell={(4.5cm+0.2pt,4.5cm+0.2pt)}{1.5cm-0.4pt}{0.4pt}{green+}};

				\draw[scale=6, domain=0.1:0.377, smooth, variable=\y, line width=0.4mm] 
					plot ({sinh(-0.903 + 2*sqrt(\y+1))-1}, \y);		
				\draw[scale=6, domain=0.377:1.1, smooth, variable=\y, line width=0.4mm]
					plot ({-sinh(-3.791 + 2*sqrt(\y+1))-1}, \y);	
					
				\draw[scale=6, domain=0.1:0.4528, smooth, variable=\y, line width=0.4mm]
					plot ({-sinh(-3.292 + 2*sqrt(\y+1))-1}, \y);	
				\draw[scale=6, domain=0.4528:1.1, smooth, variable=\y, line width=0.4mm]
					plot ({sinh(-1.529 + 2*sqrt(\y+1))-1}, \y);		
				
				\draw (3,0.6) node[dot, label={[label distance=-1mm]below:{\,\,$(x_0,t_0)$}}];
			\end{tikzpicture}
			\end{adjustbox}
			\subcaption{}\label{fig:hierarchy-2}
		\end{subfigure}
		\caption{Slices through $D_T\times D_T$ at two levels of hierarchical partitioning. A subdomain intersecting a characteristic is colored red and is further partitioned at the next level. Otherwise, it is colored green. The boundary of the tube of \eqref{eq:tube} is depicted by the thick dashed lines in (b); notice that every red subdomain is contained within the tube's interior.}
		\label{fig:hierarchy}
	\end{figure}
	
	Let $\cD_\red(L)$ denote the collection of subdomains $X\times Y$ at level $L$ that are colored red, such that $\rank_{5\epsilon}(\F_{X\times Y})\ge k_\epsilon$. Let $D_\red(L) := \bigcup_{D\in\cD_\red(L)} D$. By \eqref{eq:G-reg-boundary}, \eqref{eq:adm-chain} and the definition of $k_\epsilon$, we have $X\times Y\in\cD_\red(L)$ only if $(X\times Y)\cap Z \neq \varnothing$. Subdomains at level $L$ are 4-dimensional cubes with side length $2^{-L}$, so every $X\times Y\in\cD_\red(L)$ is contained in the closed tubular neighborhood of $Z$ in $D_T\times D_T$ with radius $2^{-L+1}$, defined as
	\begin{equation}\label{eq:tube}
		T(Z,2^{-L+1}) := \{p\in D_T\times D_T : \dist(p,Z) \le 2^{-L+1}\}.
	\end{equation}
	The volume of $T(Z,2^{-L+1})$ depends only on the coefficient $a$ of \eqref{eq:L} and can be computed explicitly using a Weyl-type tube formula.\footnote{For $a\in\cC^2$, the classical Weyl tube formula suffices, whereas for $a$ with less regularity one requires more complicated expressions; see, e.g., \citet[Thm.~5.6]{Federer1959}, \citet[Thm.~4.8]{Gray2003}, and \citet[Eq.~(4.4)]{Hug2004}. We omit the details.} In particular, we have
	\begin{equation}\label{eq:vol-hi-asymp}
		\vol(D_\red(L)) \le \vol(T(Z,2^{-L+1})) = O(2^{-L}),
	\end{equation}
	hence
	$$\sum_{X\times Y\in\cD_\red(L)}\|\F_{X\times Y}\| \le O(2^{-L/2}) \|\F\|$$
	by \eqref{eq:inadm-approx}. Therefore, by selecting $n_\epsilon:=\lceil\log_2(1/\epsilon^2)\rceil$, we guarantee that 
	\begin{equation}\label{eq:inadm-global}
		\sum_{X\times Y\in\cD_\red(n_\epsilon)}\|\F_{X\times Y}\| = O(\epsilon)\|\F\|
	\end{equation}
	and can safely approximate $\F$ by zero on $D_\red(n_\epsilon)$.

	\subsubsection{Recovery rate}\label{sss:recovery-rate}
	
	Input-output training pairs are required both for rank detection on subdomains and for approximation for the subdomains on which $\F$ is numerically low-rank. We count the training pairs needed for each task separately.

	Since rank detection occurs on every subdomain in the partitioning tree, let $\cD_\tree(n_\epsilon)$ denote the entire collection of subdomains in the tree after $n_\epsilon$ levels of partitioning. For each  subdomain colored red at a given level $L$, we split it into 16 smaller subdomains and perform the rank detection scheme on each, hence
	$$\#\cD_\tree(n_\epsilon) = 16 \sum_{L=0}^{n_\epsilon-1} \#\cD_\red(L).$$
	Subdomains at the $L$th level of partitioning have volume $16^{-L}$, so it follows from \eqref{eq:vol-hi-asymp} that
	$$\#\cD_\red(L) \le 16^L\vol(D_\red(L)) = O(8^L).$$
	Therefore,
	\begin{equation}\label{eq:num-domains}
		\#\cD_\tree(n_\epsilon) = O(8^{n_\epsilon}) = O(\epsilon^{-6}).
	\end{equation}
	Finally, recall from Section \ref{ss:rank-detection} that performing the rank detection scheme on a subdomain $X\times Y$ requires $k_\epsilon(8q_{\epsilon,X\times Y}+5)$ training pairs. We remove the dependence on $X\times Y$ by introducing
	\begin{equation}\label{eq:Xi}
		\Xi_\epsilon := \min\{\xi_{k_\epsilon,X\times Y} : X\times Y\in\cD_\tree(n_\epsilon)\}
			\in (0,1],
	\end{equation}
	which represents the quality of the covariance kernel $K$, as well as
	\begin{equation}\label{eq:Psi}
		\Psi_\epsilon := \min\left\{\log\left(\frac{\sigma_{k_\epsilon,X\times Y}}
			{\sigma_{k_\epsilon+1,X\times Y}}\right) : X\times Y\in\cD_\tree(n_\epsilon)\right\},
	\end{equation}
	which represents the influence of the singular value gaps in the rank detection scheme. It follows from \eqref{eq:A-XY} that $A_{k_\epsilon,X\times Y} = O(\Xi_\epsilon^{-1})$ and thus $q_{\epsilon,X\times Y} = O(\Psi_\epsilon^{-1}\log(\Xi_\epsilon^{-1}\epsilon^{-1}))$ by \eqref{eq:q}. Therefore, the total number of training pairs needed for rank detection on every domain in $\cD_\tree(n_\epsilon)$ is given by
	$$N_\epsilon^{(1)} 
		:= \sum_{X\times Y\in\cD_\tree(n_\epsilon)} k_\epsilon(8q_{\epsilon,X\times Y}+5)
		= O(\Psi_\epsilon^{-1}\epsilon^{-7}\log(\Xi_\epsilon^{-1}\epsilon^{-1})).$$
	
	To count training pairs needed for approximation on domains where $\F$ is numerically low-rank, we first observe that
	$$\#\cD_\green(n_\epsilon) 
		= \sum_{L=1}^{n_\epsilon}\Big(16\cdot \#\cD_\red(L-1) - \#\cD_\red(L)\Big) 
		= O(8^{n_\epsilon}),$$
	where $\cD_\green(n_\epsilon)$ is the collection of subdomains $X\times Y$ in the hierarchical level $n_\epsilon$ colored green, i.e., those satisfying $\rank_{5\epsilon}(\F_{X\times Y}) < k_\epsilon$. Recall from Section \ref{ss:adm-local} that only $2k_\epsilon$ additional training pairs are required to generate an approximant on a subdomain via rSVD after performing rank detection, so the number of training pairs needed for approximation is given by
	$$N_\epsilon^{(2)} 
		:= \sum_{X\times Y\in\cD_\green(n_\epsilon)} 2k_\epsilon = O(\epsilon^{-7}).$$
	We conclude that the total number of training pairs required is
	$$N_\epsilon := N_\epsilon^{(1)} + N_\epsilon^{(2)}
		= O(\Psi_\epsilon^{-1}\epsilon^{-7}\log(\Xi_\epsilon^{-1}\epsilon^{-1})).$$

	\subsubsection{Global approximation error}\label{sss:global-error}
	
	Let $\tilde\F$ be the approximant to $\F$ given by stitching together, at the end of the hierarchical partition, the approximants $\tilde\F_{X\times Y}$ generated on green subdomains $X\times Y$, and the zero operator on red subdomains. By \eqref{eq:inadm-global}, we have
	$$\|\F - \tilde\F\| \le \sum_{X\times Y\in\cD_\green(n)} \|\F_{X\times Y} - \tilde\F_{X\times Y}\| 
		+ O(\epsilon)\|\F\|,$$
	where every term in the summation satisfies \eqref{eq:local-approx}. Recall that $A_{k_\epsilon,X\times Y}=O(\Xi_\epsilon^{-1})$, hence
	$$\|\F - \tilde\F\| \le O(\Xi_\epsilon^{-1}\epsilon)\|\F\|$$
	is our final error.

	\subsubsection{Recovery probability}\label{sss:recovery-P}
	
	Thus far, we have assumed that our rank detection is guaranteed to succeed on each domain, i.e., that \eqref{eq:adm-chain} holds deterministically. Circumventing this assumption is simple: since the forcing terms in the input-output training data are independent, and since we perform rank detection on $\#\cD_\tree(n_\epsilon) = O(\epsilon^{-6})$ domains, we have
	$$\P\Big\{\text{\eqref{eq:adm-chain} holds for every } X\times Y\in \cD_\tree(n)\Big\}
			\ge (1 - 3e^{-k_\epsilon})^{\#\cD_\tree(n_\epsilon)} 
			= 1 - O(e^{-1/\epsilon}).$$
	Notice that for each subdomain, the approximant is generated using the same test quasimatrices as used in the rank detection scheme, hence all error bounds for approximants, e.g., \eqref{eq:local-approx}, are guaranteed to hold when conditioned on the validity of \eqref{eq:adm-chain}. Therefore, our global probability of failure is $O(e^{-1/\epsilon})$, and we have completed the proof of Theorem \ref{thm:main}.
	
	\section{Numerical Example}\label{s:numerics}
	
	We implement Algorithms \ref{alg:rSVD}, \ref{alg:main}, and \ref{alg:rank-detection} in MATLAB for the constant coefficient wave operator $\L u = u_{tt} - 4u_{xx}$ with homogeneous initial and boundary conditions.\footnote{MATLAB code is available at \url{https://github.com/chriswang030/OperatorLearningforHPDEs}.}\textsuperscript{,}\footnote{While our code has no difficulty handling more complicated hyperbolic PDEs, the Green's functions for equations with variable coefficients are generally unknown analytically, hence training data must be generated from, say, a numerical solver. Experiments with various popular techniques showed that such solvers were either (a) too slow to be practical, given the large amount of data needed, or (b) introduced numerical dissipation and/or dispersion that dominated the approximation error and resulted in a poor Green's function approximation. We discuss these issues in more detail in Section \ref{ss:data}.} Our implementation discretizes the domain blockwise at Gauss--Legendre quadrature nodes. Input-output data was generated using the known analytical expression for the true Green's function.
	
	Figure \ref{fig:numerics} depicts a slice of the approximate Green's function constructed by Algorithm \ref{alg:main} after eight levels of partitioning, as well as the pointwise error in comparison with the actual Green's function. We observe that the rank detection scheme successfully distinguishes between those subdomains that intersect characteristics and those that do not, hence the partition is fine in areas near characteristics and coarse in areas away from characteristics. Errors are concentrated near the characteristics, as expected, while low-rank subdomains are approximated very well. Figure \ref{fig:converge} shows the experimental rate of convergence as the amount of input-output training data increases. As predicted by Theorem \ref{thm:main}, the error decays like $O(N^{-1/7})$, where $N$ is the number of training data pairs. The slow convergence rate---theoretically predicted and empirically observed---shows that large amounts of data may be needed to accurately approximate Green's functions associated with hyperbolic PDEs. We emphasize that our algorithm is guarding against the nastiest possible solution operators associated with a hyperbolic PDE; if one assumes in advance that the PDE has constant coefficients, then one can do dramatically better. Nevertheless, the numerical example demonstrates that our method is stable and robust to discretization errors.
	
	\begin{figure}[t]
		\centering
		\includegraphics[width=\textwidth]{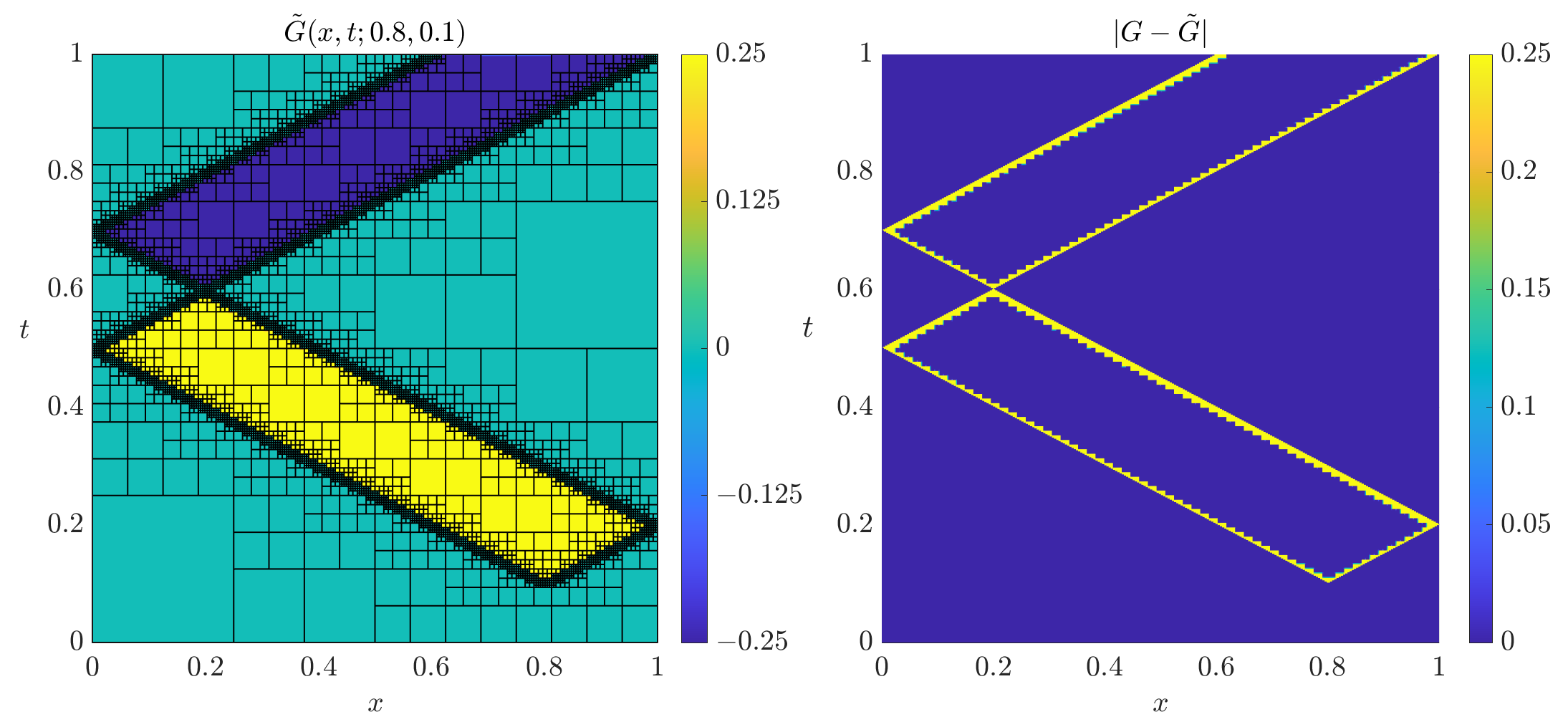}
		\caption{Approximate Green's function at the slice $(y,s)=(0.8,0.1)$ for the wave operator $\L u = u_{tt}-4u_{xx}$, overlaid with partition blocks (left), and the pointwise error at the slice compared with the exact Green's function (right).}
		\label{fig:numerics}
	\end{figure}
	
	\begin{figure}[t]
		\centering
		\includegraphics[width=0.7\textwidth]{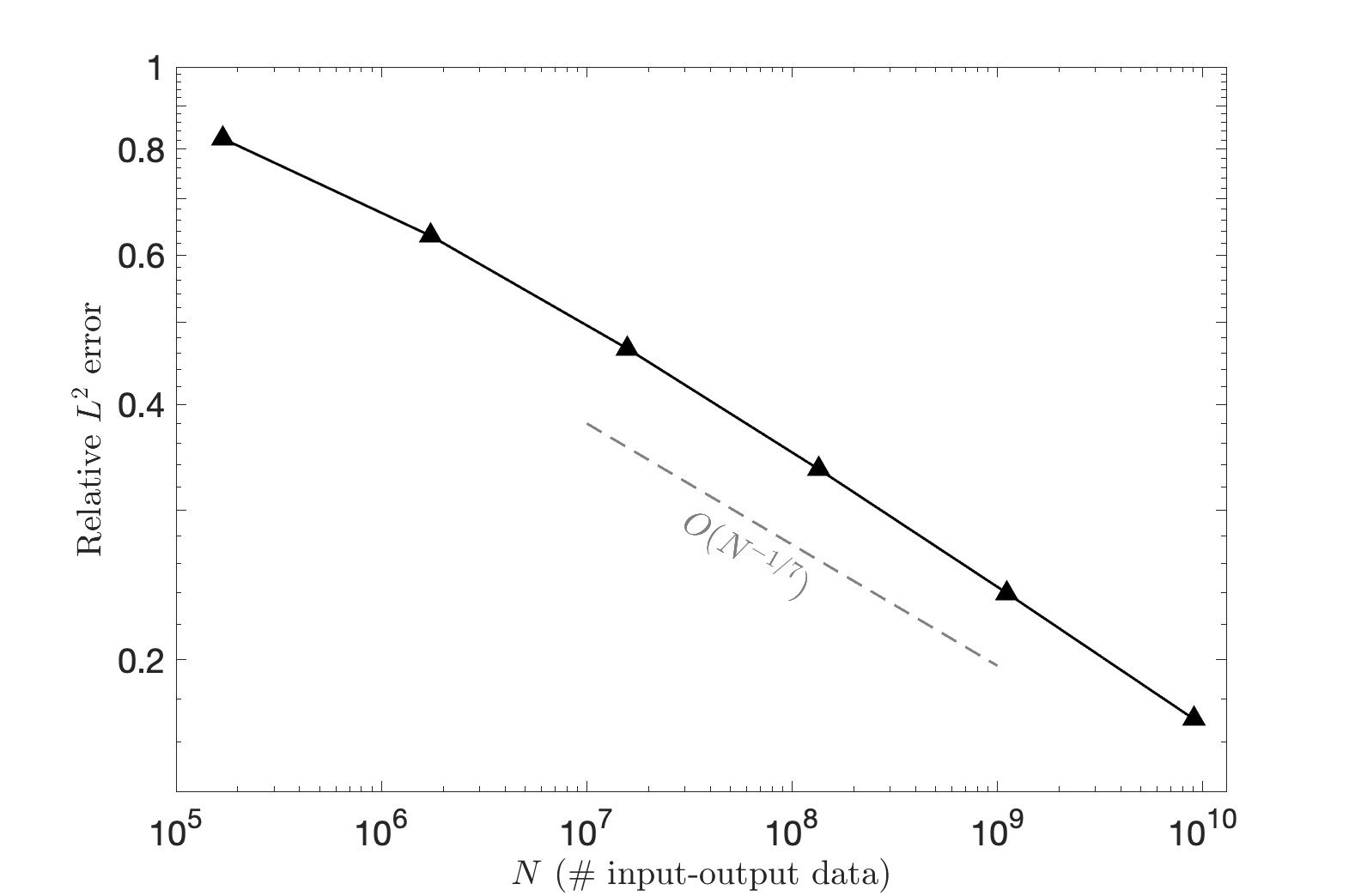}
		\caption{Empirical rate of convergence of Algorithm \ref{alg:main}.}
		\label{fig:converge}
	\end{figure}

	\section{Conclusions and Discussion}\label{s:conclusion}
	
	Our breakthrough result proves that one can rigorously recover the Green's function of a hyperbolic PDE in two variables using input-output training pairs with high probability. We do so via three theoretical contributions: (a) we prove new error bounds for the rSVD for HS operators in the operator norm, such that the suboptimality factor $A_k$ of \eqref{eq:A-simple} tends to a constant as the target rank $k$ tends to $\infty$; (b) we develop a scheme to detect the numerical rank of an operator using input-output products, with high probability; and (c) we introduce an adaptive component to the hierarchical partition of the Green's function's domain to effectively capture the location of its discontinuities.
	
	In the remainder of this section, we discuss possible relaxations of our assumptions, a potential improvement to our scheme via the peeling algorithm, the difficulties of extending to higher-dimensional hyperbolic PDOs, and some challenges arising from the availability of input-output data.
	
	\subsection{Relaxations of assumptions}\label{ss:relaxations}
	
	The main assumptions required of our unknown PDO $\L$ from \eqref{eq:L} are (a) homogeneous initial conditions, (b) self-adjointness, and (c) sufficient regularity of coefficients. Strict hyperbolicity is also required, but we do not discuss it because it enforces realistic physics by forbidding waves from traveling infinitely fast or backward in time.
	
	\subsubsection*{Inhomogeneous initial conditions}
	For use as a direct solver, the homogeneous Green's function can be applied to solve Cauchy problems with certain inhomogeneous initial conditions, as a consequence of Duhamel's 
	principle \citep[Ch.~VI.15]{Courant1962}. For initial conditions $u(x,0)=0$ and $u_t(x,0)=\psi(x)$, one simply needs to integrate $\psi$ against the Green's function on the line $\{t=0\}$. If one also wants to solve initial value problems where $u(x,0)$ is not identically zero, then the values and derivatives of the coefficients of $\L$ must be given on $\{t=0\}$ \citep[Ch.~V.5]{Courant1962}. We demonstrate the practical implementation of Duhamel's principle in Figure \ref{fig:ivp} by replacing the given inhomogeneous initial condition with an appropriate approximation of the Dirac delta on a short time strip, e.g., $f_\delta(x,t) := \delta^{-1}\chi_{[0,\delta]}(t)u_t(x,0)$, and setting our approximate solution to be
	$$\tilde u(x,t) = \int_{D_T}\tilde G(x,t;y,s)f_\delta(y,s)\dd y\dd s,
		\qquad (x,t)\in D_T.$$
	The inaccuracies seen in Figure \ref{fig:ivp} are due primarily to the use of an approximate Green's function. Because of the large amount of training data needed here, we are unable to construct a Green's function with sufficiently high accuracy with our available software and hardware.
	
	\begin{figure}[h]
		\centering
		\includegraphics[width=\textwidth]{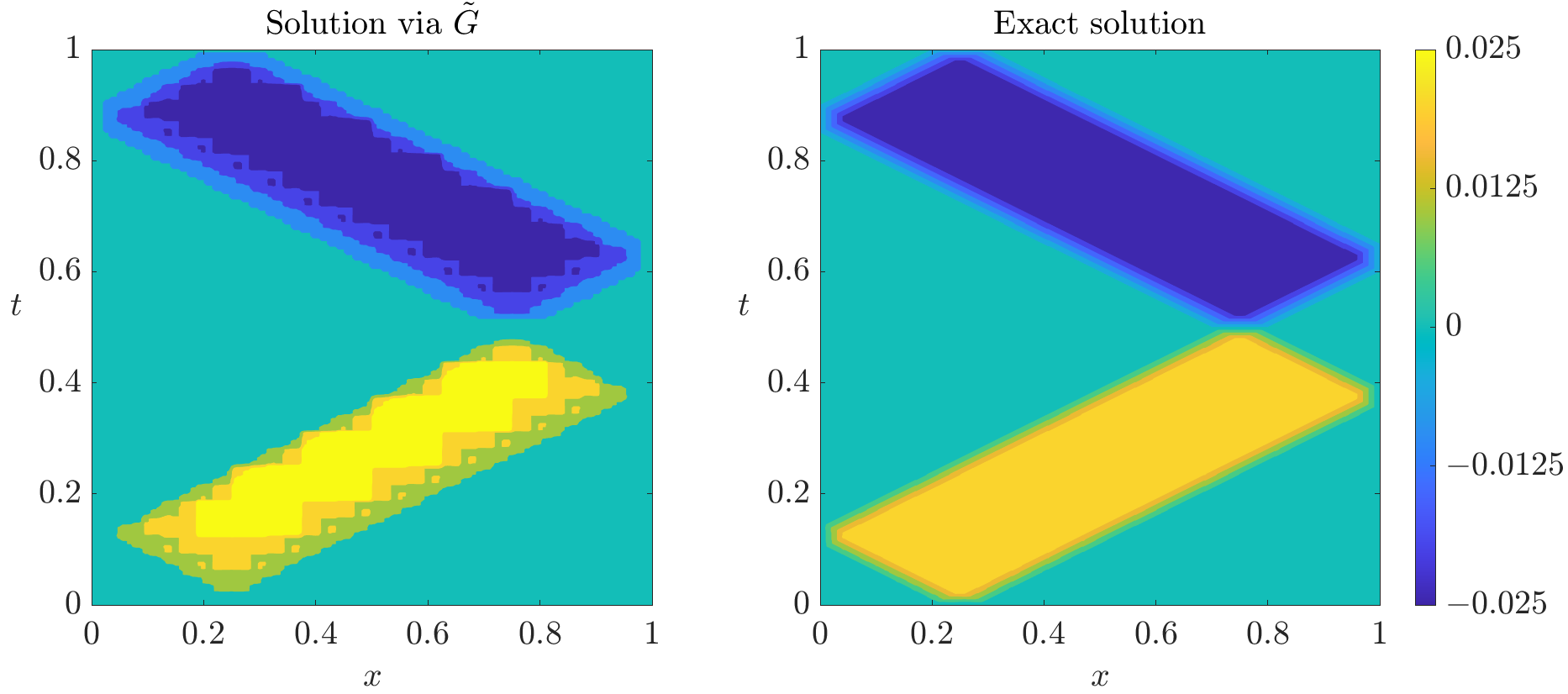}
		\caption{Approximate solution (left) and exact solution (right) to the wave operator $\L u=u_{tt}-4u_{xx}$ with homogeneous boundary conditions but inhomogeneous, discontinuous initial conditions given by $u(x,0)=0$ and $u_t(x,0)=\chi_{[0.2,0.3]}(x)$. The approximation was computed by multiplying the approximate Green's function $\tilde G$ derived from Algorithm \ref{alg:main} against an approximation of the Dirac delta on $\{t=0\}$.}
		\label{fig:ivp}
	\end{figure}
	
	One may be tempted to extrapolate the learned solution operator beyond the given time horizon by taking the value of the computed solution at the end of the first time interval to be the initial condition for the next time interval. However, without additional information about the coefficients of $\L$, one has no information whatsoever about the behavior of the Green's function beyond the time horizon on which the input-output training data is given.  If it is known that the wave speed, for instance, is independent of time, then such extrapolations beyond the initial domain may be feasible.

	\subsubsection*{Non-self-adjoint PDOs}
	While self-adjointness is not strictly necessary, more training data may be needed to learn the solution operator of a non-self-adjoint PDO to the same accuracy. This is because the coefficients of the adjoint PDO may have less regularity than the original operator. For instance, the wave operator with variable damping
	$$\L u = u_{tt} - u_{xx} + k(x,t)u_t$$
	has adjoint
	$$\L^*v = v_{tt} - v_{xx} - k(x,t)v_t - k_t(x,t)v.$$
	If $k(x,t)$ is not smooth, then one of the coefficients of $\L^*$ has one less degree of regularity. Consequently, the corresponding homogeneous Green's function also has one less degree of regularity in the region of its support and thus requires more training data to recover (see Section \ref{ss:low-rank}).
	
	Whether or not our hyperbolic PDO is self-adjoint, in general one must have access to input-output data from both the Cauchy problem \eqref{eq:problem} as well as the corresponding adjoint problem (see footnote \ref{fn:adjoint}). Indeed, one cannot expect to recover an operator without the action of the adjoint \citep{Boulle2024, Halikias2024, Otto2023}. In the special case where the hyperbolic PDO is self-adjoint and has time-independent coefficients, the homogeneous Cauchy problem is also self-adjoint.
	
	\subsubsection*{Less regular coefficients}
	We assume throughout the paper that the coefficients of $\L$ are at least of class $\cC^1$. It is certainly possible that our method extends to coefficients of lower regularity, but the Green's function theory for hyperbolic PDEs with low regularity coefficients is more subtle and may require one to work instead with parametrices. For example, \citet{Smith1998} describes a parametrix for hyperbolic PDEs with Lipschitz coefficients whose spatial derivatives are also Lipschitz. For equations with even lower regularity, it is not always the case that the Cauchy problem is well-posed, and the regularity in the time and the space variables must be considered separately \citep[see, e.g.,][]{Cicognani1999, Cicognani2018, Hurd1968, Reissig2003}. It is unclear in such cases whether or not a Green's function approach, such as the one taken in this paper, is possible.
	
	\subsection{Peeling algorithm}\label{ss:peeling}
	
	Our algorithm requires performing rank detection and the rSVD on $O(\epsilon^{-6})$ subdomains. These operations require $O(\Psi_\epsilon^{-1}\epsilon^{-1}\log(\Xi_\epsilon^{-1}\epsilon^{-1}))$ input-output training pairs for each subdomain; considering the subdomains one by one leads us to the number of training pairs described in Theorem \ref{thm:main}. 
	
	However, it was recently shown in \citet{Boulle2023} that one can dramatically reduce the required number of input-output pairs by considering multiple subdomains simultaneously via the peeling algorithm \citep{Lin2011}. The argument relies in part on bounding the chromatic number of a graph associated with the hierarchical partition \citep{Levitt2022}. In our setting, the chromatic number at a given level $L$ of the partition, denoted by $\chi(L)$, is approximately the number of subdomains of minimal volume---that is, cubes with side length $2^{-L}$---seen in a 2-dimensional slice of the domain $D_T\times D_T$. For instance, the chromatic number associated with the partition at level $L=3$ seen in Figure \ref{fig:hierarchy-2} is 52. Heuristically, the chromatic number is maximized when the characteristics are straight lines of some minimal slope $0<m\le1$, in which case $\chi(L) \approx 2^Lm^{-1}$. If the arguments in \citet{Boulle2023} can be repeated for hyperbolic PDOs, then the required number of input-output training pairs in Theorem \ref{thm:main} improves from $O(\Psi_\epsilon^{-1}\epsilon^{-7}\log(\Xi_\epsilon^{-1}\epsilon^{-1}))$ to $O(\Psi_\epsilon^{-1}\epsilon^{-3}\polylog(\Xi_\epsilon^{-1}\epsilon^{-1}))$. Moreover, by Remark \ref{rmk:regularity}, this term continues to improve as one assumes greater regularity of the coefficients $a,b,c$ of \eqref{eq:L}. The challenge, which is addressed in \citet{Boulle2023} for elliptic PDEs, is that the peeling algorithm of \citet{Lin2011} is not proven to be stable. Hence, careful analysis is needed to show that errors from low-rank approximation and rank detection do not accumulate.
	
	Notice that, unlike the elliptic case, the use of peeling cannot entirely reduce the $\epsilon^{-6}$ factor to $\polylog(\epsilon^{-1})$. This is because the singularities of the Green's functions of hyperbolic PDEs in $d$ variables occupy a $(2d-1)$-dimensional hypersurface in the $2d$-dimensional domain, due to the characteristics. In contrast, the singularities of the Green's functions of elliptic PDEs lie only on the diagonal, which is $d$-dimensional. In other words, the Green's functions of hyperbolic PDEs have an additional $d-1$ dimensions worth of singularities, and the chromatic number $\chi(L)$ thus grows exponentially with $L$. It is therefore doubtful that one can recover the Green's functions of hyperbolic PDEs at a sublinear rate without a significant development in the approximation theory of hierarchical matrices.

	\subsection{Extending to higher dimensions}
	
	We learn the Green's function of a hyperbolic PDE in two variables by detecting the location of its characteristics, which feature as jump discontinuities for the Green's function. A similar scheme should also work for detecting the singularities of Green's functions for hyperbolic PDEs in higher dimensions.
	
	The main difficulty of higher-dimensional Green's functions is that they are not necessarily square-integrable. For instance, the Green's function of the wave equation in three spatial dimensions is given by
	$$G(x,t;y,s) = \frac{\delta(t-s-\|x-y\|)}{4\pi\|x-y\|},
		\qquad (x,t),(y,s)\in\R^3\times[0,\infty),$$
	where $\delta$ is the Dirac delta. Here, $G$ has singularities of a distributional nature on the characteristics, which form a 7-dimensional hypersurface in the 8-dimensional domain of the Green's function. In this case, the theory of the rSVD for HS operators no longer applies since HS integral operators must have a square-integrable kernel. This difficulty has been circumvented to a degree in the case of parabolic PDEs, whose Green's functions are integrable but not square-integrable on the diagonal \citep{Boulle2022b}. Nevertheless, it is not immediately clear that the analysis carries over when the singularities are distributional.
	
	Another factor that may play a role in higher dimensions is the effect of Huygens' principle \citep{Courant1962, Evans2010}. For the standard wave equation in $d$ spatial dimensions, Huygens' principle implies that the support of the Green's function lies exclusively on the boundary of the future light cone when $d$ is odd. In contrast, the support is the entire future light cone when $d$ is even. For general hyperbolic PDOs, this effect does not correspond so nicely with the parity of the spatial dimension \citep[see, e.g.,][]{Gunther1991}. Determining the impact of Huygens' principle may be an essential aspect of hyperbolic PDE learning in higher dimensions.

	\subsection{Availability of data}\label{ss:data}
	In practice, one often does not have access to exact solutions of the PDE in question. Instead, input-output training data may be generated from simulations driven by a numerical PDE solver. In such cases, the training data inherits whichever discretization errors the solver itself exhibits. 
	
	Figure \ref{fig:bb} displays the output of our algorithm for training data coming from two different numerical solvers. The first is a first-order accurate upwind method \citep{Banks2012} while the second is the simple second-order centered-in-time and centered-in-space finite difference method; both advance by explicit time-steps on a fixed-resolution uniform grid. Although much more advanced solvers exist, we chose these methods for their speed and simplicity given the large number of numerical solves required by our algorithm. Due to the numerical dissipation of the first-order method, we observe that the approximate Green's function is unable to locate the characteristics precisely, since they get smoothed out. On the other hand, the numerical dispersion exhibited by the second-order method fabricates artificial oscillations both inside and outside the light cones; these oscillations mislead our rank detection scheme into believing the Green's function is high-rank in regions where it is not, resulting in excessive partitioning within the light cones. 

	As such, the availability and accuracy of input-output data poses a major challenge for operator learning for hyperbolic PDEs that we encourage future research---both theoretical and practical---to address. While elliptic and parabolic PDEs significantly smooth out errors from the training data, those errors are structurally propagated throughout the domain in the hyperbolic case and must be dealt with in a careful manner. One possible approach is to use our algorithm to quickly construct a low-accuracy approximation of the Green's function to get a rough sense of where the characteristics lie. Then one can use this information to update the numerical solver generating the input-output data to better track propagation of singularities and minimize dispersion and dissipation errors. Alternating between using the Green's function approximation to improve the solver and vice versa may be an iterative solution to the problems presented above. 
	
	\begin{figure}[h]
		\centering
		\includegraphics[width=\textwidth]{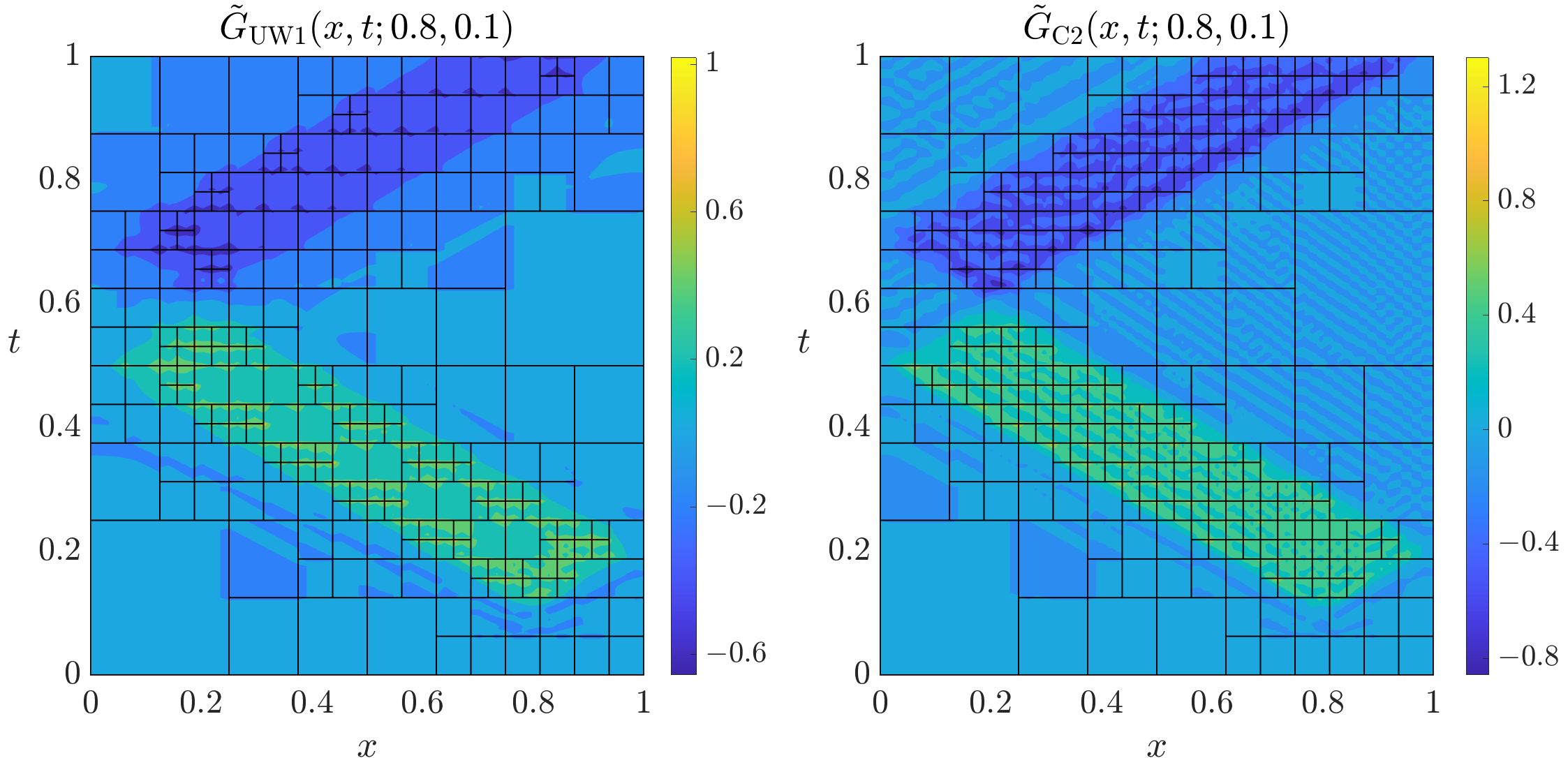}
		\caption{Approximate Green's function at the slice $(y,s)=(0.8,0.1)$ using input-output data generated by time-stepping numerical solvers: the first-order upwind method of \citet{Banks2012} (left) and the second-order centered difference discretization (right).}
		\label{fig:bb}
	\end{figure}

\acks{The work of C.~W.\ was supported by the NSF GRFP under grant DGE-2139899. The work of A.~T.\ was supported by the Office of Naval Research (ONR) grant N00014-23-1-2729, NSF grant DMS-2045646, and a Weiss Junior Fellowship Award. Many thanks to Diana Halikias, Sungwoo Jeong, and Seick Kim for helpful discussions and suggestions.}

\appendix
\section{Background Material}\label{appendix-A}
	
	We review the relevant background material, comprising of HS operators, quasimatrices, orthogonal projectors, Gaussian processes, and approximation using Legendre series \citep{Halko2011, Hsing2015, Townsend2015, Trefethen2019, Vershynin2018}.
	
	\subsection{Hilbert--Schmidt operators}\label{ss:HS-op}
	
    Hilbert--Schmidt (HS) operators acting on $L^2$ functions are infinite-dimensional analogues of matrices acting on vectors. For $d\ge1$, let $D_1,D_2\subseteq\R^d$ be domains and let $L^2(D_i)$ be the space of square-integrable functions defined on $D_i$, for $i=1,2$, equipped with the inner product $\brak{f,g} := \int_{D_i} f(x)g(x)\dd x$. Since $L^2(D_i)$ are separable Hilbert spaces, then for any compact linear operator $\F:L^2(D_1)\to L^2(D_2)$, there exists by the spectral theorem complete orthonormal bases $\{v_j\}_{j=1}^\infty$ and $\{e_j\}_{j=1}^\infty$ for $L^2(D_1)$ and $L^2(D_2)$ respectively, as well as a sequence of positive numbers $\sigma_1\ge\sigma_2\ge\dots>0$ such that
    \begin{equation}\label{eq:Ff-SVE}
    	\F f = \sum_{\substack{j=1\\\sigma_j>0}}^\infty \sigma_j\brak{v_j,f} e_j, \qquad f\in L^2(D_1),
	\end{equation}
	where equality and convergence hold in the $L^2$ sense. This representation is called the singular value expansion (SVE) of $\F$; we call $\{e_j\}$ and $\{v_j\}$ the left and right singular functions of $\F$, respectively, and we call $\{\sigma_j\}$ the singular values of $\F$. The subspaces spanned by the first $k$ left or right singular functions, for some positive integer $k$, are called dominant left and right $k$-singular subspaces of $\F$, respectively.
	
	We say that $\F$ is a HS operator if its HS norm is finite, i.e.,
    $$\|\F\|_{\HS} := \left(\sum_{j=1}^\infty \|\F v_j\|_{L^2(D_2)}^2\right)^{1/2} < \infty.$$
    Since $\|\F\|_\HS^2 = \sum_{j=1}^\infty\sigma_j^2$, the HS norm is an infinite-dimensional analogue of the Frobenius norm for matrices. We also have $\|\F\|\le\|\F\|_\HS$. A special type of HS operator is a HS integral operator, which requires that $\F$ have an integral representation given by
    \begin{equation}\label{eq:F-kernel}
    	(\F f)(x) = \int_{D_1} G(x,y)f(y)\dd y, \qquad f\in L^2(D_1),\ x\in D_2.
	\end{equation}
    for some kernel $G\in L^2(D_2\times D_1)$. In this case, $\|\F\|_\HS = \|G\|_{L^2(D_2\times D_1)}$. 
    
    The adjoint of $\F$ is the unique compact linear operator $\F^*:L^2(D_2)\to L^2(D_1)$ satisfying $(\F^*)^*=\F$, $\|\F\|=\|\F^*\|$, and $\brak{\F f,h}_{L^2(D_2)}=\brak{f,\F^*h}_{L^2(D_1)}$ for every $f\in L^2(D_1)$ and $h\in L^2(D_2)$. The adjoint $\F^*$ has a SVE given by
    $$\F^*g = \sum_{\substack{j=1\\\sigma_j>0}}^\infty \sigma_j\brak{e_j,g}v_j, 
    	\qquad g\in L^2(D_2),$$
	and for any $f\in L^2(D_1)$ and $h\in L^2(D_2)$ satisfies
	\begin{equation}\label{eq:F*F-FF*}
		(\F^*\F)f = \sum_{j=1}^\infty \sigma_j^2 \brak{v_j,f} v_j, \qquad 
			(\F\F^*)h = \sum_{j=1}^\infty \sigma_j^2 \brak{e_j,h} e_j.
	\end{equation}
    
    By the Eckart--Young--Mirsky theorem \citep[Thm.~4.18]{Stewart1990}, truncating the SVE of a HS operator after $k$ terms yields a best rank-$k$ approximation in the HS and operator norms. If $\F_k:L^2(D_1)\to L^2(D_2)$ is defined by the truncation $\F_kf := \sum_{j=1}^k \sigma_j\brak{v_j,f}e_j$, then
    \begin{equation}\label{eq:eym-HS}
    	\min_{\rank(\tilde\F) = k}\|\F - \tilde\F\|_\HS = \|\F - \F_k\|_\HS 
    		= \left(\sum_{j=k+1}^\infty \sigma_j^2\right)^{1/2}
	\end{equation}
	as well as
	\begin{equation}\label{eq:eym-op}
		\min_{\rank(\tilde\F) = k}\|\F - \tilde\F\| = \|\F - \F_k\| = \sigma_{k+1}.
	\end{equation}
	Finally, we define the numerical rank of $\F$ with error tolerance $\epsilon>0$, denoted by $\rank_\epsilon(\F)$, to be the smallest integer $k$ such that $\|\F - \F_k\| < \epsilon\|\F\|$. In particular, \eqref{eq:eym-op} implies that $\rank_\epsilon(\F)\le k$ holds if $\sigma_{k+1}\le\epsilon\sigma_1$.

    \subsection{Quasimatrices}\label{ss:quasimatrices}
    
    Quasimatrices are infinite-dimensional analogues of tall-skinny matrices, and we use them principally to simplify notation and emphasize the analogy with matrices. A $D_1\times k$ quasimatrix $\bOmega$ is represented by $k$ columns, where each column is a function in $L^2(D_1)$, via
    $$\bOmega = \begin{bmatrix} \omega_1 & \cdots & \omega_k \end{bmatrix},
    	\qquad \omega_j\in L^2(D_1).$$
	One can similarly define $\infty\times k$ infinite matrices, in which each of the $k$ columns is a sequence in $\ell^2$, and likewise $D_1\times\infty$ quasimatrices.
	
	Many operations for rectangular matrices generalize to quasimatrices. If $\F:L^2(D_1)\to L^2(D_2)$ is a HS operator, then $\F\bOmega$ denotes the quasimatrix given by applying $\F$ to each column of $\bOmega$, i.e.,
    $$\F\bOmega = \begin{bmatrix} \F\omega_1 & \cdots & \F\omega_k \end{bmatrix}.$$
   	Quasimatrices have a transpose, denoted $\bOmega^*$, in the sense that
    $$\bOmega^*\bOmega = \begin{bmatrix}
    		\brak{\omega_1,\omega_1} & \cdots & \brak{\omega_1,\omega_k} \\
			\vdots & \ddots & \vdots \\
			\brak{\omega_k,\omega_1} & \cdots & \brak{\omega_k,\omega_k}
		\end{bmatrix}, \qquad
		\bOmega\bOmega^*(x,y) = \sum_{j=1}^k \omega_j(x)\omega_j(y).$$
	Here, $\bOmega^*\bOmega$ is a $k\times k$ matrix of real numbers, while $\bOmega\bOmega^*$ is a function in $L^2(D_1\times D_1)$ and can be thought of as an integral operator by taking $\bOmega\bOmega^*(x,y)$ to be its kernel. Quasimatrices can be thought of as HS operators on the relevant Hilbert spaces if, for instance, their singular values are square summable.

	\subsection{Orthogonal projectors}
	
	In analogy with the finite-dimensional setting, an orthogonal projection is a bounded self-adjoint linear operator $\bP:L^2(D_2)\to L^2(D_2)$ that satisfies $\bP^2=\bP$. Orthogonal projectors are completely determined by their range: for any closed subspace ${\bf M}\subseteq L^2(D_2)$, there is a unique orthogonal projector $\bP_{\bf M}$ whose range is $\bf M$. This operator is compact if and only if $\bf M$ is finite-dimensional. For any $D_2\times k$ quasimatrix $\bOmega$, the unique orthogonal projector associated with the column space of $\bOmega$ is denoted by
	$$\bP_\bOmega:=\bOmega(\bOmega^*\bOmega)^\dagger\bOmega^*:L^2(D_2)\to L^2(D_2),$$
	as $\bP_\bOmega\F:L^2(D_1)\to L^2(D_2)$ captures the orthogonal projection of the range of $\F$ onto the finite-dimensional column space of $\bOmega$. If $\bOmega$ has full column rank, then $\bOmega^*\bOmega$ is an invertible $k\times k$ matrix, and $(\bOmega^*\bOmega)^\dagger=(\bOmega^*\bOmega)^{-1}$. 
	
	We rely on the following inequality, analogous to that of \citet[Prop.~8.6]{Halko2011}, for the operator norm of orthogonal projectors.
	
	\begin{proposition}\label{prop:orthogonal-proj}
		Let $D_1,D_2\subseteq\R^d$ be domains. Let $\bP:L^2(D_2)\to L^2(D_2)$ be an orthogonal projector, and let $\F:L^2(D_1)\to L^2(D_2)$ be a HS operator. Then
		\begin{equation}
			\|\bP\F\| \le \|\bP(\F\F^*)^q\F\|^{1/(2q+1)}
		\end{equation}
		holds for every $q>0$.
	\end{proposition}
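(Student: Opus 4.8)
The plan is to adapt the finite-dimensional argument of \citet[Prop.~8.6]{Halko2011} to the HS setting, reducing the claim to a single scalar inequality for the positive self-adjoint compact operator $M := \F\F^*:L^2(D_2)\to L^2(D_2)$. First I would record two operator-norm identities. Using $\|\bS\|^2 = \|\bS\bS^*\|$ and the self-adjointness of $\F\F^*$, one has $\|\bP\F\|^2 = \|\bP\F\F^*\bP\| = \|\bP M\bP\|$. Similarly, defining the fractional power $(\F\F^*)^q := \sum_j \sigma_j^{2q}\brak{\cdot,e_j}e_j$ via functional calculus on the eigen-expansion of $\F\F^*$ in \eqref{eq:F*F-FF*} (where $\sigma_j$ and $e_j$ are the singular values and left singular functions of $\F$), the operator $\bS := \bP(\F\F^*)^q\F$ has adjoint $\bS^* = \F^*(\F\F^*)^q\bP$, so $\|\bP(\F\F^*)^q\F\|^2 = \|\bP(\F\F^*)^q\,\F\F^*\,(\F\F^*)^q\bP\| = \|\bP M^{2q+1}\bP\|$, using $(\F\F^*)^q(\F\F^*)(\F\F^*)^q = (\F\F^*)^{2q+1}=M^{2q+1}$ (check on eigenvalues). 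With these two identities, the proposition is equivalent to the estimate $\|\bP M\bP\|^{t}\le\|\bP M^{t}\bP\|$ with $t := 2q+1\ge 1$.

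To prove this estimate I would use the variational formula $\|\bP M\bP\| = \sup_{\|x\|_{L^2(D_2)}=1}\brak{M\bP x,\bP x}$, valid because $\bP M\bP$ is self-adjoint and positive. Fix a unit vector $x$, put $y:=\bP x$ (so $\|y\|_{L^2(D_2)}\le 1$), and expand in the orthonormal eigenbasis $\{e_j\}$ of $M$ with eigenvalues $\sigma_j^2\ge 0$: with $c_j:=|\brak{y,e_j}|^2\ge 0$ we get $\brak{My,y}=\sum_j\sigma_j^2 c_j$ and $\sum_j c_j\le\|y\|_{L^2(D_2)}^2\le 1$. Since $u\mapsto u^{t}$ is convex on $[0,\infty)$ and vanishes at $0$, for nonnegative weights $c_j$ with $\sum_j c_j\le 1$ one has $(\sum_j c_j\sigma_j^2)^{t}\le\sum_j c_j\sigma_j^{2t}$ (rescale the weights to a probability vector and apply Jensen, or pad with a zero eigenvalue of the missing mass). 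Hence $\brak{My,y}^{t}\le\sum_j\sigma_j^{2t}c_j=\brak{M^{t}y,y}=\brak{\bP M^{t}\bP x,x}\le\|\bP M^{t}\bP\|$. Taking the supremum over unit $x$ and using monotonicity of $u\mapsto u^{t}$ gives $\|\bP M\bP\|^{t}\le\|\bP M^{t}\bP\|$.

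Combining the pieces yields $\|\bP\F\|^{2(2q+1)}=\|\bP M\bP\|^{2q+1}\le\|\bP M^{2q+1}\bP\|=\|\bP(\F\F^*)^q\F\|^2$, and taking the $2(2q+1)$-th root proves the proposition. I do not expect a genuine obstacle: the argument is essentially a transcription of the matrix case. The only points needing slight care are the well-definedness of $(\F\F^*)^q$ — legitimate since $\F$, being HS, is compact, so $\F\F^*$ admits the spectral decomposition \eqref{eq:F*F-FF*} — and the application of Jensen's inequality with weights summing to at most $1$ rather than exactly $1$, which is handled by the rescaling/padding remark above.
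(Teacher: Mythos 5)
Your proof is correct and follows essentially the same route as the paper: both reduce the claim to the scalar estimate $\|\bP M\bP\|^{t}\le\|\bP M^{t}\bP\|$ with $M=\F\F^{*}$ and $t=2q+1$ via the identity $\|\bS\|^{2}=\|\bS\bS^{*}\|$, and then establish that estimate from the Rayleigh-quotient (Courant--Fischer, $k=1$) characterization of the operator norm of the positive self-adjoint compact operator $\bP M\bP$ together with Jensen's inequality on the spectral expansion, exactly as in \citet[Prop.~8.6]{Halko2011}. The paper simply states this as ``closely follows \citet[Prop.~8.6]{Halko2011}, making use of the Courant--Fischer minimax principle,'' so your write-up is a faithful and appropriately detailed expansion of the intended argument.
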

	\begin{proof}
		The proof closely follows that of \citet[Prop.~8.6]{Halko2011}, making use of the Courant--Fischer minimax principle \citep[Thm.~4.2.7]{Hsing2015}.
	\end{proof}

	\subsection{Gaussian processes}\label{ss:GP}
	
	A Gaussian process (GP) is an infinite-dimensional analogue of a multivariate Gaussian distribution, and a function drawn from a GP is analogous to a random vector drawn from a Gaussian distribution. For a domain $D\subseteq\R^d$ and a continuous symmetric positive semidefinite kernel $K:D\times D\to\R$, we define a GP with mean $\mu:D\to\R$ and covariance $K$ to be a stochastic process $\{X_t:t\in D\}$ such that the random vector $(X_{t_1},\dots,X_{t_n})$, for every finite set of indices $t_1,\dots,t_n\in D$, is a multivariate Gaussian distribution with mean $(\mu(t_1),\dots,\mu(t_n))$ and covariance matrix $K_{ij}=K(t_i,t_j)$, $1\le i,j\le n$. We denote such a GP by $\GP(\mu,K)$. Functions drawn from a GP with continuous kernel are almost surely in $L^2$. Additionally, since $K$ is positive semidefinite, it has non-negative eigenvalues $\lambda_1\ge\lambda_2\ge \dots \ge0$ with corresponding eigenfunctions $\{r_j\}_{j=1}^\infty$ that form an orthonormal basis for $L^2(D)$ \citep[Thm.~4.6.5]{Hsing2015}. The Karhunen--L\`oeve expansion provides a method for sampling functions from a GP, as $f\sim\GP(0,K)$ has the expansion $f(t) = \sum_{j=1}^\infty \sqrt{\lambda_j} c_j r_j(t)$ converging in $L^2$ uniformly in $t\in D$, where $\{c_j\}_{j=1}^\infty$ are independent standard Gaussian random variables \citep[Thm.~7.3.5]{Hsing2015}. Additionally, we define the trace of $K$ by $\Tr(K) := \sum_{j=1}^\infty\lambda_j < \infty$.

	\subsection{Low-rank approximation in the \texorpdfstring{$L^2$}{L2} norm}\label{ss:low-rank}
	
	In recovering a low-rank approximation of a kernel of a HS integral operator, we need explicit estimates on the decay of the kernel's singular values. We can obtain them by analyzing the kernel expressed in a Legendre series. Let $P_n^*(x)$ denote the shifted Legendre polynomial of degree $n$ in the domain $[0,1]$.\footnote{Legendre polynomials are defined in \citet[Table~18.3.1]{NIST:DLMF}. The shifted Legendre polynomials are simply the composition of Legendre polynomials with the transformation $x\mapsto2(x-1)$ \citep[see][Ch.~18]{NIST:DLMF}.} The shifted Legendre polynomials form an orthogonal basis of $L^2([0,1])$, and if a function $f\in L^2([0,1])$ is uniformly H\"older continuous with parameter $>1/2$, then it has a uniformly convergent Legendre expansion $f(x) = \sum_{n=0}^\infty a_nP_n^*(x)$. The truncated Legendre series $f_k(x) := \sum_{n=0}^k a_nP_n^*(x)$, for some $k\ge0$, is the degree-$k$ $L^2$ projection of $f$ onto the space of polynomials of degree $\leq k$.
	
	If $f$ is $r$-times differentiable and $f^{(r)}$ has bounded total variation $V$, then for any $k>r+1$, we have
	\begin{equation}\label{eq:approx-poly}
		\|f - f_{k-1}\|_{L^2} \le \frac{\sqrt{2}\,V}{\sqrt{\pi(r+1/2)}(k-r-1)^{r+1/2}} = O(k^{-(r+1/2)}),
	\end{equation}
	where $f_{k-1}$ is the degree-$(k-1)$ $L^2$ projection of $f$ \citep[Thm.~3.5]{Wang2023}. Similar results hold for a multivariate function $f(x_1,\dots,x_n)$ if the conditions above hold uniformly over every 1-dimensional slice of $f$, which holds if $f$ is $r$-times differentiable and $D^rf$ has total variation bounded by $V<\infty$ uniformly over all order-$r$ partial derivatives $D^r$ \citep{Shi2021, Townsend2014}. In particular, for any positive integers $m,n$, we may consider $f:[0,1]^{m+n}\to\R$ and its degree-$(k-1)$ $L^2$ projection $f_{k-1}$ as the kernels of HS integral operators $\F,\tilde\F:L^2([0,1]^m)\to L^2([0,1]^n)$, respectively. Since $f_{k-1}$ is a polynomial of degree $k-1$, then $\tilde\F$ is a rank-$k$ operator. By the Eckart--Young--Mirsky theorem,
	$$\left(\sum_{j=k+1}^\infty \sigma_j^2\right)^{1/2} \le \|\F - \tilde\F\|_\HS 
		= \|f - f_{k-1}\|_{L^2} \le \frac{\sqrt{2}\,V}{\sqrt{\pi(r+1/2)}(k-r-1)^{r+1/2}},$$
	where $\sigma_j$ denotes the $j$th singular value of $\F$. Combining this with the observation that $k\sigma_{2k}^2 \le \sum_{j=k+1}^{2k} \sigma_j^2 \le \sum_{j=k+1}^\infty\sigma_j^2$, we obtain an explicit decay rate for the singular values of $\F$, i.e.,
	\begin{equation}\label{eq:approx-sv}
		\sigma_k \le \frac{2^{r+3/2} V}{\sqrt{\pi(r+1/2)} \cdot\sqrt k(k-2r-2)^{r+1/2}} 
			= O(k^{-(r+1)})
	\end{equation}
	for $k > 2(r+1)$.

\section{Randomized Singular Value Decomposition}\label{appendix-B}

This appendix contains proofs of results relating to the rSVD in Section \ref{s:rSVD}.

\subsection{Proofs for Section \ref{ss:rSVD-op}}

In this section, we prove Theorems \ref{thm:rSVD-op} and \ref{thm:rSVD-power}, which bound the error of an approximant generated by the rSVD with respect to the operator norm. The proof is similar to that of \citet[Thms.~10.6 and 10.8]{Halko2011}, generalized to the infinite-dimensional setting with non-standard Gaussian vectors.
	
	We first introduce notation. For separable Hilbert spaces $\cH,\cK$, let $\HS(\cH,\cK)$ be the Hilbert space of HS operators $\cH\to\cK$. Given a bounded, self-adjoint linear operator $\bC:\cK\to\cK$, we define a quadratic form on $\cK$ by 
	$$\brak{x,y}_\bC := \brak{x,\bC y}_\cK, \qquad x,y\in\cK,$$
	as well as a quadratic form on $\HS(\cH,\cK)$ by 
	$$\brak{\bA,\bB}_\bC := \brak{\bA,\bC\bB}_\HS.$$
	If $\bC$ is positive definite, then both quadratic forms are inner products on $\cK$ and $\HS(\cH,\cK)$, respectively. Then we let $\|x\|_\bC^2 := \brak{x,x}_\bC$ and $\|\bA\|_\bC^2 := \brak{\bA,\bA}_\bC$ denote the induced norms. We remark that $\|x\|_\bC=\|\bC^{1/2}x\|_\cK$ and $\|\bA\|_{\bI\to\bC}=\|\bC^{1/2}\bA\|$, where $\|\bA\|_{\bI\to\bC}$ denotes the operator norm of $\bA$ when viewed as an operator $(\cH,\|\cdot\|_\cH)\to(\cK,\|\cdot\|_\bC)$. Finally, we let $\cH\otimes\cK$ denote the tensor product of Hilbert spaces, which embeds into $\HS(\cH,\cK)$ by associating with each $x\otimes y\in\cH\otimes\cK$ the operator $u\mapsto\brak{x,u}_\cH\,y$.
	
	\begin{lemma}\label{lem:op-norm-gaussian-1}
		Let $\cH,\cK$ be Hilbert spaces. For any $u,w\in\cH$ and $v,z\in\cK$, we have
		$$\|u\otimes v - w\otimes z\|_\HS^2 \le \max(\|u\|^2,\|w\|^2)\|v-z\|^2 
			+ \max(\|v\|^2,\|z\|^2)\|u-w\|^2.$$
	\end{lemma}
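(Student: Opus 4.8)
The plan is to reduce the claim to the bilinearity of the elementary-tensor map $(x,y)\mapsto x\otimes y$ together with the inner-product identity $\brak{x_1\otimes y_1,x_2\otimes y_2}_\HS=\brak{x_1,x_2}_\cH\brak{y_1,y_2}_\cK$, so in particular $\|x\otimes y\|_\HS=\|x\|\,\|y\|$. First I would use the ``mixed-product'' telescoping identity, which is immediate from bilinearity:
$$u\otimes v-w\otimes z=u\otimes(v-z)+(u-w)\otimes z.$$
Expanding $\|\,\cdot\,\|_\HS^2$ of the right-hand side gives $\|u\|^2\|v-z\|^2+\|u-w\|^2\|z\|^2$ plus a cross term $2\brak{u,u-w}_\cH\brak{v-z,z}_\cK$. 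Since $\|u\|^2\le\max(\|u\|^2,\|w\|^2)$ and $\|z\|^2\le\max(\|v\|^2,\|z\|^2)$, the two ``diagonal'' terms are already of the required shape, and everything comes down to absorbing the cross term into the right-hand side.

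Controlling the cross term is the main obstacle. The cleanest route is to symmetrise: averaging the identity above with its mirror image $u\otimes v-w\otimes z=(u-w)\otimes v+w\otimes(v-z)$ yields
$$u\otimes v-w\otimes z=\tfrac{u+w}{2}\otimes(v-z)+(u-w)\otimes\tfrac{v+z}{2},$$
and expanding the squared norm now produces the cross term $2\brak{\tfrac{u+w}{2},u-w}\brak{v-z,\tfrac{v+z}{2}}=\tfrac12(\|u\|^2-\|w\|^2)(\|v\|^2-\|z\|^2)$, a quantity carrying a sign but no dependence on the angles between the vectors. Then I would rewrite the two diagonal terms via the parallelogram law, $\big\|\tfrac{u+w}{2}\big\|^2=\tfrac12(\|u\|^2+\|w\|^2)-\tfrac14\|u-w\|^2$ and the analogue for $v,z$, and use $\tfrac12(\|u\|^2+\|w\|^2)=\max(\|u\|^2,\|w\|^2)-\tfrac12\big|\|u\|^2-\|w\|^2\big|$; this is precisely what lets the negative part of the cross term be soaked up. What remains is to check that the leftover positive contribution is dominated by the terms $\max(\|v\|^2,\|z\|^2)\|u-w\|^2$ and $\max(\|u\|^2,\|w\|^2)\|v-z\|^2$ already present, for which I would feed in the reverse triangle inequality $\big|\|u\|^2-\|w\|^2\big|=\big|\|u\|-\|w\|\big|(\|u\|+\|w\|)\le\|u-w\|(\|u\|+\|w\|)$ and its $v,z$ analogue to convert the size gaps into factors of $\|u-w\|$ and $\|v-z\|$.

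I expect the only genuine work to be this final bookkeeping step: after the reductions above one is left with a scalar inequality in the nonnegative quantities $\|u\|,\|w\|,\|v\|,\|z\|,\|u-w\|,\|v-z\|$ (constrained by the reverse triangle inequalities), whose verification with the stated constants is the delicate part. As a fallback that sidesteps the cross term entirely, I would note that the triangle inequality applied to whichever of the two telescoping identities keeps the larger of $\|u\|,\|w\|$ in the slot multiplying $\|v-z\|$ gives $\|u\otimes v-w\otimes z\|_\HS\le\max(\|u\|,\|w\|)\,\|v-z\|+\max(\|v\|,\|z\|)\,\|u-w\|$, after which $(a+b)^2\le 2a^2+2b^2$ yields a bound of exactly the stated form up to a factor of two.
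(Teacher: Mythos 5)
Your hedging about the ``delicate bookkeeping'' was the right instinct: the bookkeeping cannot be completed, because the lemma as stated is \emph{false}. Take $\cH=\cK=\R^2$, $u=v=(2,0)$, $w=z=(1,0)$. Then $u\otimes v-w\otimes z$ is the rank-one operator $\phi\mapsto 3\phi_1(1,0)$, so $\|u\otimes v-w\otimes z\|_\HS^2 = 9$, while the claimed bound evaluates to $\max(4,1)\cdot\|v-z\|^2 + \max(4,1)\cdot\|u-w\|^2 = 4+4 = 8 < 9$. (Your symmetrised expansion makes the failure visible: with $\|u\|=\|v\|=2$ and $\|w\|=\|z\|=1$, the cross term $\tfrac12(\|u\|^2-\|w\|^2)(\|v\|^2-\|z\|^2)=\tfrac92$ is positive and too large to absorb.) The paper's own argument has the same flaw: the step labelled ``straightforward to verify'' is not an inequality at all. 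Writing $a=\|u\|,b=\|w\|,c=\|v\|,d=\|z\|$, in the aligned case $a\ge b$, $c\ge d$ one has
\begin{equation*}
A^2(a^2+b^2)+B^2(c^2+d^2) - \bigl(a^2c^2+b^2d^2+2A^2B^2\bigr) = (b^2-a^2)(c^2-d^2) \le 0,
\end{equation*}
so the inequality points the wrong way whenever $a>b$ and $c>d$ (and symmetrically when $a<b$, $c<d$); equality holds only in the anti-aligned cases.

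What you \emph{can} salvage is exactly your fallback. Either telescoping identity, combined with $\|x\otimes y\|_\HS=\|x\|\,\|y\|$ and the triangle inequality, gives $\|u\otimes v-w\otimes z\|_\HS\le\max(\|u\|,\|w\|)\|v-z\|+\max(\|v\|,\|z\|)\|u-w\|$ directly (you do not even need to choose between the two identities --- both factors are already dominated by the maxima). Squaring via $(\alpha+\beta)^2\le 2\alpha^2+2\beta^2$ then proves the statement with an extra constant $2$, and the counterexample above shows that constant cannot be brought down to $1$. This matters downstream: Proposition~\ref{prop:gaussian-op-norm-E} invokes this lemma to verify the Sudakov--Fernique hypothesis with constant exactly $1$; a constant $C>1$ would instead force a comparison against $\sqrt{C}\,Y$ and put a $\sqrt{C}$ factor into the conclusion of that proposition and hence into Theorem~\ref{thm:rSVD-op}.
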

	\begin{proof}
		Observe that $\brak{u\otimes v,w\otimes z}_\HS = \brak{u,w}\brak{v,z}$. Then
		\begin{align*}
			\|u\otimes v - w\otimes z\|^2 
			&= \|u\otimes v\|^2 + \|w\otimes z\|^2 - 2\brak{u\otimes v,w\otimes z} \\
			&= \|u\|^2\|v\|^2 + \|w\|^2\|z\|^2 - 2\brak{u,w}\brak{v,z}.
		\end{align*}
		Let $A=\max(\|v\|,\|z\|)$ and $B=\max(\|u\|,\|w\|)$. Then
		\begin{align*}
			A^2\|u-w\|^2 &+ B^2\|v-z\|^2 \\
			&= A^2(\|u\|^2 + \|w\|^2) + B^2(\|v\|^2 + \|z\|^2) 
				- 2(A^2\brak{u,w} + B^2\brak{v,z}).
		\end{align*}
		Observe that $(B^2-\brak{u,w})(A^2-\brak{v,z})\ge 0$, hence
		$$\|u\otimes v - w\otimes z\|_\HS^2 \le \|u\|^2\|v\|^2 + \|w\|^2\|z\|^2
			+ 2A^2B^2 - 2(A^2\brak{u,w} + B^2\brak{v,z}).$$
		It is straightforward to verify that
		$$\|u\|^2\|v\|^2 + \|w\|^2\|z\|^2 + 2A^2B^2 \le A^2(\|u\|^2 + \|w\|^2) 
			+ B^2(\|v\|^2 + \|z\|^2),$$
		which completes the proof.
	\end{proof}
	
	We extend a result of \citet[Prop.~10.1]{Halko2011} to the infinite-dimensional setting, following \citet{Gordon1985, Gordon1988}. To simplify notation, we use $\|\cdot\|$ to denote either the operator, $\ell^2$, or $L^2(D)$ norm depending on context. We also use $\cN(0,\bI\otimes\bC)$ to denote the distribution of a mean zero random matrix with independent columns drawn from $\cN(0,\bC).$
	
	\begin{proposition}\label{prop:gaussian-op-norm-E}
		Let $\bG$ be an $\infty\times n$ infinite random matrix with distribution $\cN(0,\bI_n\otimes\bC)$, where $\bC$ is a symmetric positive definite $\infty\times\infty$ covariance matrix with $\Tr(\bC)<\infty$. Let $\bS$ be a $D\times\infty$ quasimatrix and let $\bT$ be a $n\times\infty$ infinite matrix, both deterministic with $\|\bS\|_\HS,\|\bT\|_\HS<\infty$. Then
		$$\E\|\bS\bG\bT\| \le \Big(\|\bC^{1/2}\|_\HS\|\bT\| + \|\bC^{1/2}\|\|\bT\|_\HS\Big)\|\bS\|.$$
	\end{proposition}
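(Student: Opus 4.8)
The plan is to run the Chevet--Gordon Gaussian comparison argument underlying \citet[Prop.~10.1]{Halko2011}, adapted to the infinite-dimensional, non-standard-covariance setting. First I would realise the operator norm as the supremum of a Gaussian process. Setting $X_{x,y} := \brak{\bG,\,(\bT x)\otimes(\bS^*y)}_\HS$, where the tensor is understood via the embedding $\HS(\R^n,\ell^2)$ as above, one has
$$\|\bS\bG\bT\| \;=\; \sup_{\|x\|\le1,\ \|y\|\le1}\brak{\bS\bG\bT x,\, y} \;=\; \sup_{\|x\|\le1,\ \|y\|\le1} X_{x,y},$$
with the suprema over unit balls in the relevant $\ell^2$ and $L^2$ spaces, and $X$ a centered Gaussian process. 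Since the columns of $\bG$ have covariance $\bC$, the covariance operator of $\bG$ is left-multiplication by $\bC$, so a short computation gives $\E[X_{x,y}X_{x',y'}] = \brak{\bT x,\bT x'}\brak{\bC^{1/2}\bS^*y,\bC^{1/2}\bS^*y'}$ and hence $\E[(X_{x,y}-X_{x',y'})^2] = \bigl\|(\bT x)\otimes(\bC^{1/2}\bS^*y) - (\bT x')\otimes(\bC^{1/2}\bS^*y')\bigr\|_\HS^2$. Applying Lemma \ref{lem:op-norm-gaussian-1} with $u=\bT x$, $w=\bT x'$, $v=\bC^{1/2}\bS^*y$, $z=\bC^{1/2}\bS^*y'$, together with $\|\bT x\|\le\|\bT\|$ and $\|\bC^{1/2}\bS^*y\|\le\|\bS\bC^{1/2}\|$ on the unit balls, bounds this increment by $\|\bS\bC^{1/2}\|^2\|\bT(x-x')\|^2 + \|\bT\|^2\|\bC^{1/2}\bS^*(y-y')\|^2$.

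Next I would introduce the decoupled comparison process $Y_{x,y} := \|\bS\bC^{1/2}\|\,\brak{h,\bT x} + \|\bT\|\,\brak{g,\bS^*y}$, where $h\sim\cN(0,\bI_n)$ and $g\sim\cN(0,\bC)$ are independent. Here the hypothesis $\Tr(\bC)<\infty$ is essential: it guarantees that $g$ is almost surely an element of $\ell^2$ (equivalently, that $\bC^{1/2}$ is Hilbert--Schmidt), so there is a genuine Gaussian vector to compare against --- one cannot use a ``standard Gaussian on $\ell^2$,'' which does not exist. One checks $\E[(Y_{x,y}-Y_{x',y'})^2] = \|\bS\bC^{1/2}\|^2\|\bT(x-x')\|^2 + \|\bT\|^2\|\bC^{1/2}\bS^*(y-y')\|^2$, which dominates the increments of $X$, so the Slepian--Sudakov--Fernique comparison inequality (in the form of \citet{Gordon1985, Gordon1988}) yields $\E\sup X_{x,y}\le\E\sup Y_{x,y}$. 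The right-hand side splits as $\|\bS\bC^{1/2}\|\,\E\|\bT^*h\| + \|\bT\|\,\E\|\bS g\|$, using $\sup_{\|x\|\le1}\brak{h,\bT x}=\|\bT^*h\|$ and $\sup_{\|y\|\le1}\brak{g,\bS^*y}=\|\bS g\|$. Bounding each expectation by the square root of its second moment via Jensen gives $\E\|\bT^*h\|\le(\Tr(\bT\bT^*))^{1/2}=\|\bT\|_\HS$ and $\E\|\bS g\|\le(\Tr(\bS\bC\bS^*))^{1/2}=\|\bS\bC^{1/2}\|_\HS$, so $\E\|\bS\bG\bT\|\le\|\bS\bC^{1/2}\|\,\|\bT\|_\HS + \|\bS\bC^{1/2}\|_\HS\,\|\bT\|$. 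A final application of submultiplicativity of the Hilbert--Schmidt norm against the operator norm, $\|\bS\bC^{1/2}\|\le\|\bS\|\|\bC^{1/2}\|$ and $\|\bS\bC^{1/2}\|_\HS\le\|\bS\|\|\bC^{1/2}\|_\HS$, collapses this to the stated bound.

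The part requiring the most care is not any individual estimate but the rigor of the Gaussian-process machinery in infinite dimensions: the process $X_{x,y}$ is indexed by a product of two infinite-dimensional unit balls, so one must ensure it admits a version with bounded, separably measurable sample paths --- so that $\sup X_{x,y}$ is a bona fide random variable --- and that the Slepian-type comparison is legitimate in this setting. I would handle this by first proving the inequality with $\bS$, $\bT$, and $\bC$ replaced by finite-rank truncations (obtained from their singular value expansions and the spectral decomposition of $\bC$), where the claim reduces to the classical finite-dimensional statement of \citet[Prop.~10.1]{Halko2011}, and then passing to the limit: the truncated quantities increase to $\E\|\bS\bG\bT\|$, and convergence is controlled by the integrable envelope $\|\bS\bG\bT\|\le\|\bS\|\,\|\bG\|_\HS\,\|\bT\|_\HS$ with $\E\|\bG\|_\HS\le(n\Tr(\bC))^{1/2}<\infty$. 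This truncation also cleanly sidesteps the subtlety above, since on each finite-dimensional block one may legitimately factor $\bG = \bC^{1/2}\bG_0$ with $\bG_0$ a standard Gaussian matrix.
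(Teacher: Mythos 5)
Your argument is essentially the paper's proof: the same Gaussian process $X$, the same increment bound via Lemma~\ref{lem:op-norm-gaussian-1} (you pre-weight the vectors by $\bC^{1/2}$ rather than endowing $\ell^2$ with the $\bC$-inner product as the paper does, which amounts to the same thing), the same decoupled comparison process $Y$ (you pull the scalars $\|\bS\bC^{1/2}\|$ and $\|\bT\|$ outside $h$ and $g$ rather than folding them into the covariances, a cosmetic difference), the same Sudakov--Fernique step, and the same Cauchy--Schwarz/Jensen finish followed by submultiplicativity. The one genuine addition is your finite-rank truncation argument to justify measurability of the suprema and the legitimacy of the Gaussian comparison in infinite dimensions, a point the paper sweeps under the rug with the remark that ``one can show that $X(u,v)$ and $Y(u,v)$ are well-defined''; that is a welcome bit of extra rigor but does not change the route.
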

	\begin{proof}
		Let $g\sim\cN(0,\|\bT\|^2\bC)$ be Gaussian in $\ell^2$ and let $h\sim\cN(0,\|\bC^{1/2}\bS\|^2\,\bI_n)$ be a Gaussian random vector in $\R^n$, such that $g$, $h$, and $\bG$ are all independent. Define
		$$X(u,v) := \brak{\bS\bG\bT u,v},\quad Y(u,v) := \brak{\bS g,v} + \brak{\bT^* h,u}$$
		indexed by $u\in\ell^2$ and $v\in L^2(D)$ of unit length. One can show that $X(u,v)$ and $Y(u,v)$ are well-defined, and that $X(\cdot,\cdot)$ and $Y(\cdot,\cdot)$ are mean zero Gaussian processes.
			
		We aim to apply the Sudakov--Fernique inequality \citep[Thm.~7.2.11]{Vershynin2018}.
		For any unit-length $u,w\in\ell^2$ and $v,z\in L^2(D)$, set $\bA = (\bT u)\otimes(\bS^*v) - (\bT w)\otimes(\bS^*z)$. We compute
		\begin{align}\label{eq:gaussian-op-norm-E-1}
			\E(X(u,v) - X(w,z))^2
			&= \Tr(\bA^*\bC\bA) \nonumber \\
			&\le \|\bT\|^2\|\bS^*(v-z)\|_\bC^2 + \|\bS\|_{\bI\to\bC}^2\|\bT(u-w)\|^2 \nonumber \\
			&= \|\bT\|^2\|\bC^{1/2}\bS^*(v-z)\|^2+ \|\bC^{1/2}\bS\|^2\|\bT(u-w)\|^2.
		\end{align}
		The first equality follows from fully expanding $(X(u,v)-X(w,z))^2$ and using the definition of $\bG$; the inequality is obtained by applying Lemma \ref{lem:op-norm-gaussian-1} taking $\cH$ to be $\R^n$ with the usual inner product, and $\cK$ to be $\ell^2$ with the inner product induced by $\bC$. 
		
		On the other hand, observe that
		\begin{align*}
			\E\brak{\bS g,v-z}^2
			= \|\bT\|^2(v-z)^*\bS\bC\bS^*(v-z)
			= \|\bT\|^2\|\bC^{1/2}\bS^*(v-z)\|^2,
		\end{align*}
		and similarly $\E\brak{\bT^*h,u-w}^2 = \|\bC^{1/2}\bS\|^2\|\bT(u-w)\|^2$. Therefore, by independence of $g$ and $h$, we have
		\begin{align}\label{eq:gaussian-op-norm-E-2}
			\E(Y(u,v) - Y(w,z))^2
			= \|\bT\|^2\|\bC^{1/2}\bS^*(v-z)\|^2 + \|\bC^{1/2}\bS\|^2\|\bT(u-w)\|^2.
		\end{align}
		Combining \eqref{eq:gaussian-op-norm-E-1} and \eqref{eq:gaussian-op-norm-E-2} yields $\E(X(u,v)-X(w,z))^2\le\E(Y(u,v)-Y(w,z))^2$ for every unit-length $u,v,w,z$, so by the Sudakov--Fernique inequality, we have
		\begin{equation}\label{eq:SGT-00}
			\E\sup_{\substack{u\in\ell^2\\\|u\|=1}}\sup_{\substack{v\in L^2(D)\\\|v\|=1}}
			\brak{\bS\bG\bT u,v} \le 
			\E\sup_{\substack{u\in\ell^2\\\|u\|=1}}\sup_{\substack{v\in L^2(D)\\\|v\|=1}}
			(\brak{\bS g,v} + \brak{\bT^*h,u}).
		\end{equation}
		Since $\bS\bG\bT$ is almost surely a bounded operator, then the left-hand side of \eqref{eq:SGT-00} is $\E\|\bS\bG\bT\|$. For the right-hand side, two applications of the Cauchy--Schwarz inequality yield
		$$\E\sup_{\substack{v\in L^2(D)\\\|v\|=1}}\brak{\bS g,v}
			\le (\E\|\bS g\|^2)^{1/2}
			= \Tr(\|\bT\|^2\bS\bC\bS^*)^{1/2}
			\le \|\bC^{1/2}\|_\HS\|\bS\|\|\bT\|$$
		An analogous argument for $\brak{\bT^*h,u}$ combined with \eqref{eq:SGT-00} completes the proof.
	\end{proof}

	Finally, we derive probability estimates for the operator norm of the pseudoinverse of non-standard Gaussian matrices, following \citet{Chen2005} and \citet{Edelman1988}.
	
	\begin{proposition}\label{prop:op-norm-gaussian-pseudo}
		Let $\bG$ be an $m\times n$ random matrix with distribution $\cN(0,\bI_n\otimes\bC)$, where $\bC$ is a symmetric positive definite $m\times m$ covariance matrix with eigenvalues $\lambda_1\ge\dots\ge\lambda_m>0$. If $n\ge m\ge 2$, then for any $t\ge0$, we have
		$$\P\{\|\bG^\dagger\| \ge t\} \le \frac{1}{\sqrt{2\pi(n-m+1)}} 
			\left(\frac{e\sqrt n}{\sqrt{\lambda_m}(n-m+1)}\right)^{n-m+1} t^{-(n-m+1)}.$$
	\end{proposition}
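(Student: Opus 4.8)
The plan is to reduce the statement to the corresponding bound for a \emph{standard} Gaussian matrix, for which the tail of the pseudoinverse norm is classical, and then rescale. Concretely, since $\bC$ is symmetric positive definite I would write $\bG = \bC^{1/2}\bG_0$, where $\bG_0$ is an $m\times n$ matrix with i.i.d.\ $\cN(0,1)$ entries: each column of $\bC^{1/2}\bG_0$ is distributed as $\cN(0,\bC)$ and the columns are independent, so $\bC^{1/2}\bG_0\sim\cN(0,\bI_n\otimes\bC)$ as required. Because $n\ge m$ and $\bC$ is invertible, both $\bG$ and $\bG_0$ have full row rank $m$ almost surely, so $\|\bG^\dagger\| = \sigma_{\min}(\bG)^{-1}$ and $\|\bG_0^\dagger\| = \sigma_{\min}(\bG_0)^{-1}$, with $\sigma_{\min}$ the $m$-th largest singular value.

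Next I would transfer the tail through this factorization. For any unit vector $x\in\R^m$ one has $x^*\bG\bG^*x = (\bC^{1/2}x)^*(\bG_0\bG_0^*)(\bC^{1/2}x) \ge \sigma_{\min}(\bG_0)^2\,\|\bC^{1/2}x\|^2 \ge \lambda_m\,\sigma_{\min}(\bG_0)^2$, so $\sigma_{\min}(\bG)^2 = \lambda_{\min}(\bG\bG^*) \ge \lambda_m\,\sigma_{\min}(\bG_0)^2$ and hence $\|\bG^\dagger\| \le \lambda_m^{-1/2}\|\bG_0^\dagger\|$. Therefore $\P\{\|\bG^\dagger\|\ge t\} \le \P\{\|\bG_0^\dagger\|\ge \sqrt{\lambda_m}\,t\}$, and it suffices to prove that for an $m\times n$ standard Gaussian matrix with $n\ge m\ge 2$ and any $s>0$,
\[ \P\{\|\bG_0^\dagger\|\ge s\} \le \frac{1}{\sqrt{2\pi(n-m+1)}}\left(\frac{e\sqrt n}{n-m+1}\right)^{n-m+1}s^{-(n-m+1)}; \]
substituting $s=\sqrt{\lambda_m}\,t$ and absorbing the factor $\lambda_m^{-(n-m+1)/2}$ into the $(e\sqrt n)$-term then produces exactly the claimed inequality.

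For this last inequality I would follow \citet{Edelman1988} and \citet{Chen2005}: the smallest singular value $\sigma_{\min}(\bG_0)$, equivalently $\lambda_{\min}$ of the real Wishart matrix $\bG_0\bG_0^*$, has an explicit density, and integrating it near the origin bounds $\P\{\sigma_{\min}(\bG_0)\le u\}$ by a constant multiple of $u^{\,n-m+1}$; estimating the Gamma-function constants by Stirling (exactly as in the finite-dimensional pseudoinverse bound of \citet[Prop.~10.4]{Halko2011}) yields the displayed form with $s=1/u$. The reduction in the first two paragraphs is routine linear algebra; the real work --- and the place where the hypotheses $n\ge m\ge 2$ and the ``excess column count'' $n-m+1$ genuinely enter --- is this standard-Gaussian tail bound, i.e.\ a quantitative lower-tail estimate for the smallest singular value of a rectangular Gaussian matrix that is uniform in the aspect ratio. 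Since that estimate is imported from \citet{Chen2005} and \citet{Edelman1988}, once it is in hand the proposition follows immediately by the scaling computation above.
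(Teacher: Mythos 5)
Your proof is correct and reaches exactly the stated bound, but by a genuinely simpler route than the paper. The paper works at the level of the Wishart density: it invokes the explicit joint eigenvalue density of $\bG\bG^*\sim\cW_m(n,\bC)$ (a noncentral expression involving the matrix hypergeometric function ${}_0F_0(\bC^{-1},-\tfrac12\bX)$), bounds that hypergeometric factor above by $e^{-\frac{1}{2\lambda_m}\sum_i x_i}$ using the monotonicity result of Kates (1981), and then follows the Chen--Dongarra integration and a Stirling estimate. You instead perform the ``replace $\bC$ by $\lambda_m\bI_m$'' step at the matrix level: factoring $\bG=\bC^{1/2}\bG_0$ with $\bG_0$ standard Gaussian, noting $\sigma_{\min}(\bG)\ge\sqrt{\lambda_m}\,\sigma_{\min}(\bG_0)$ so that $\P\{\|\bG^\dagger\|\ge t\}\le\P\{\|\bG_0^\dagger\|\ge\sqrt{\lambda_m}\,t\}$, and then importing the classical tail bound for $\|\bG_0^\dagger\|$ (Edelman, Chen--Dongarra, essentially \citet[Lem.~4.1]{Chen2005} plus Stirling). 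Both arguments ultimately discard $\bC$ in favor of $\lambda_m\bI_m$, but your reduction is a short deterministic inequality and avoids the non-identity Wishart density and the hypergeometric function entirely; the substitution $s=\sqrt{\lambda_m}\,t$ recovers the paper's constant exactly, so nothing is lost. What the paper's longer route buys is a self-contained derivation of the bound on the smallest-eigenvalue density for general $\bC$, whereas your argument treats the standard-Gaussian tail as a black box --- a reasonable trade given that the black box is entirely classical.
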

	\begin{proof}
		Let $x_1\ge\dots\ge x_m>0$ be the eigenvalues of the Wishart matrix $\bG\bG^*\sim\cW_m(n,\bC)$. Since $\|\bG^\dagger\| = x_m^{-1/2}$, then we focus on the distribution of $x_m$. It is well-known \citep{James1964, James1968} that the joint density of the eigenvalues is given by
		$$f(x_1,\dots,x_m) 
			= \frac{|\bC|^{-\frac n2}}{K_{m,n}} {}_0F_0(\bC^{-1},-\tfrac12\bX) 
			\prod_{1\le i<j\le m}(x_i - x_j) \prod_{i=1}^m x_i^{\frac{n-m-1}{2}},$$
		where $\bX = \diag(x_1,\dots,x_m)$, $|\cdot|$ is the determinant function, ${}_0F_0$ is the hypergeometric function of two matrix arguments, $\Gamma_m$ is the multivariate Gamma function, and
		$$K_{m,n} = \frac{\pi^{\frac{m^2}{2}}}{2^{\frac{mn}{2}} 
			\Gamma_m(\tfrac n2)\Gamma_m(\tfrac m2)}$$
		\citep[for details and definitions, see][]{Muirhead1982}. We bound the density function since the hypergeometric function is increasing in the eigenvalues of its arguments \citep[Thm.~IV.1]{Kates1981}, that is,
		$${}_0F_0(\bC^{-1},-\tfrac 12\bX) \le {}_0F_0(\tfrac{1}{\lambda_m}\bI_m,-\tfrac12\bX)
			= e^{-\frac{1}{2\lambda_m}\sum_{i=1}^mx_i}$$
		where the equality follows from \citet[Prob.~7.7]{Muirhead1982}. Therefore,
		$$f(x_1,\dots,x_m) \le \frac{|\bC|^{-\frac n2}}{K_{m,n}} 
			e^{-\frac{1}{2\lambda_m}\sum_{i=1}^mx_i}
			\prod_{1\le i<j\le m}(x_i - x_j) \prod_{i=1}^m x_i^{\frac{n-m-1}{2}}.$$
		We bound the density function $f_{x_m}$ of $x_m$ by integration. Let $R_x = \{(x_1,\dots,x_{m-1}) : x_1\ge\dots\ge x_{m-1}\ge x\}$, so
		$$f_{x_m}(x) 
			\le \frac{|\bC|^{-\frac n2}}{K_{m,n}} x^{\frac{n-m-1}{2}} e^{-\frac{x}{2\lambda_m}} 
			\int_{R_x} e^{-\frac{1}{2\lambda_m}\sum_{i=1}^{m-1}x_i}
			\prod_{1\le i<j\le m-1}(x_i-x_j)
			\prod_{i=1}^{m-1}(x_i-x) x_i^{\frac{n-m-1}{2}} \dd x_i.$$
		We bound further by using $x_i-x \le x_i$ and the non-negativity of the integrand, then perform the change of variables $x_i = \lambda_my_i$ to obtain
		$$f_{x_m}(x) \le \frac{|\bC|^{-\frac n2}}{K_{m,n}} \lambda_m^{\frac{(m-1)(n+1)}{2}}
			x^{\frac{n-m-1}{2}} e^{-\frac{x}{2\lambda_m}} 
			\int_{R_0} e^{-\frac12\sum_{i=1}^{m-1}y_i} \prod_{1\le i<j\le m-1}(y_i-y_j)
			\prod_{i=1}^{m-1} y_i^{\frac{n-m+1}{2}} \dd y_i,$$
		where $R_0 = \{(y_1,\dots,y_{m-1}) : y_1\ge\dots\ge y_{m-1}\ge0\}$. Notice that the integrand is simply the unnormalized joint density of the eigenvalues of a $\cW_{m-1}(n+1,\bI_{m-1})$ matrix, so the integral evaluates to $K_{m-1,n+1}$. By \citet[Eq.~(3.14)]{Chen2005}, we have
		$$\frac{K_{m-1,n+1}}{K_{m,n}} 
			= \frac{2^{\frac{n-m-1}{2}}\Gamma(\frac{n+1}{2})}{\Gamma(\frac m2)\Gamma(n-m+1)},$$
		as well as
		$$|\bC|^{-\frac n2}\lambda_m^{\frac{(m-1)(n+1)}{2}} \le \lambda_m^{\frac{m-n-1}{2}}.$$
		Thus,
		$$f_{x_m}(x) \le \frac{2^{\frac{n-m-1}{2}} \lambda_m^{\frac{m-n-1}{2}} \Gamma(\frac{n+1}{2})}
			{\Gamma(\frac m2)\Gamma(n-m+1)} x^{\frac{n-m-1}{2}} e^{-\frac{x}{2\lambda_m}},$$
		and we obtain
		$$\P\{x_m \le x^{-2}\} \le \frac{1}{\Gamma(n-m+2)}
			\left(\frac{\sqrt n}{x \sqrt{\lambda_m}}\right)^{n-m+1}$$
		by following \citet[Lem.~4.1]{Chen2005}. We conclude by applying Stirling's approximation.
	\end{proof}
	
	\begin{corollary}\label{cor:op-norm-gaussian-pseudo-E}
		Under the same assumptions as Proposition \ref{prop:op-norm-gaussian-pseudo}, we have
		$$\E\|\bG^\dagger\| < \frac{e\sqrt n}{\sqrt{\lambda_m}(n-m)}.$$
	\end{corollary}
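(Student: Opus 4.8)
The plan is to integrate the tail bound of Proposition \ref{prop:op-norm-gaussian-pseudo}. First observe that the claim is vacuous when $n=m$, since then the right-hand side is $+\infty$; so I would assume $n>m$ and set $k:=n-m+1\ge 2$. Since $\bG$ has full rank $m$ almost surely (being a nondegenerate Gaussian matrix with $n\ge m$), $\bG^\dagger$ is almost surely a well-defined bounded operator with $\|\bG^\dagger\|\ge 0$, so the layer-cake formula gives $\E\|\bG^\dagger\| = \int_0^\infty \P\{\|\bG^\dagger\|\ge t\}\dd t$.

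Next I would combine the trivial bound $\P\le 1$ (valid for small $t$) with Proposition \ref{prop:op-norm-gaussian-pseudo} to write $\P\{\|\bG^\dagger\|\ge t\}\le \min\!\bigl(1,\,c\,t^{-k}\bigr)$, where $c:=\tfrac{1}{\sqrt{2\pi k}}\bigl(\tfrac{e\sqrt n}{\sqrt{\lambda_m}\,k}\bigr)^{k}$. The two expressions cross at $t_\ast:=c^{1/k}=\tfrac{e\sqrt n}{\sqrt{\lambda_m}\,k}\,(2\pi k)^{-1/(2k)}$, and I would split the integral there:
\[
\E\|\bG^\dagger\| \;\le\; \int_0^{t_\ast} 1\dd t + \int_{t_\ast}^\infty c\,t^{-k}\dd t
\;=\; t_\ast + \frac{c}{k-1}\,t_\ast^{-(k-1)}
\;=\; t_\ast\Bigl(1+\tfrac{1}{k-1}\Bigr)
\;=\; \frac{k}{k-1}\,t_\ast ,
\]
where I used $c=t_\ast^{\,k}$ in the middle equality. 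The tail integral converges precisely because $k\ge 2$, which is exactly where the hypothesis $n>m$ enters.

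Finally I would substitute the value of $t_\ast$ and simplify, using $k-1=n-m$:
\[
\frac{k}{k-1}\,t_\ast \;=\; \frac{k}{n-m}\cdot\frac{e\sqrt n}{\sqrt{\lambda_m}\,k}\,(2\pi k)^{-1/(2k)}
\;=\; \frac{e\sqrt n}{\sqrt{\lambda_m}\,(n-m)}\,(2\pi k)^{-1/(2k)} ,
\]
and since $2\pi k>1$ the factor $(2\pi k)^{-1/(2k)}$ is strictly less than $1$, which yields the strict inequality $\E\|\bG^\dagger\|<\tfrac{e\sqrt n}{\sqrt{\lambda_m}\,(n-m)}$.

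There is no substantive obstacle here: the argument is a routine integration of a polynomial tail bound against the crossover split. The only points that require a little care are verifying the crossover computation $t_\ast=c^{1/k}$ and noticing that $\int_{t_\ast}^\infty t^{-k}\dd t$ diverges when $k=1$ — which is precisely why the (trivially true) case $n=m$ must be set aside at the outset.
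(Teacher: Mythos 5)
Your proof is correct and follows essentially the same route the paper intends: the paper's proof simply states ``the proof is the same as that in \citet[Prop.~A.4]{Halko2011},'' which is precisely the layer-cake integration of the polynomial tail bound from Proposition~\ref{prop:op-norm-gaussian-pseudo} with a crossover split at $t_\ast = c^{1/k}$. Your handling of the degenerate case $n=m$ (where the claim is vacuous) and the observation that the strict inequality comes from the factor $(2\pi k)^{-1/(2k)} < 1$ match what one obtains by carrying out Halko's argument with the extra $\sqrt{\lambda_m}$ dependence.
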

	\begin{proof}
		The proof is the same as that in \citet[Prop.~A.4]{Halko2011}.
	\end{proof}
	
	We are ready to prove Theorem \ref{thm:rSVD-op}.
	
	\begin{proof}(Theorem \ref{thm:rSVD-op})
		Given the notation of Section \ref{ss:rSVD-op}, let $\bOmega_1 = \bV_1^*\bOmega$ and $\bOmega_2 = \bV_2^*\bOmega$. By \cite[Lem.~1]{Boulle2022a}---which holds for any unitarily invariant norm---we have
		$$\E\|\F - \bP_\bY\F\| \le \|\bSigma_2\| + \E\|\bSigma_2\bOmega_2\bOmega_1^\dagger\|.$$
		To compute $\E\|\bSigma_2\bOmega_2\bOmega_1^\dagger\|$, notice that $\bV^*\bOmega\sim\cN(0,\bI_{k+p}\otimes\bC)$, by \cite[Lem.~1]{Boulle2022a}, with columns that are almost surely in $\ell^2$ since $\Tr(\bC)=\Tr(K)<\infty$. Let $\bOmega_2|\bOmega_1$ denote the random matrix $\bOmega_2$ conditioned on $\bOmega_1$, which, by \citet[Thm.~2 and Cor.~2]{Mandelbaum1984}, has distribution $\cN(\bC_{21}\bC_{11}^{-1}\bOmega_1,\bI_{k+p}\otimes(\bC_{22} - \bC_{21}\bC_{11}^{-1}\bC_{12}))$. Let $\bar\bOmega_2$ be normally distributed with mean zero and the same covariance as $\bOmega_2|\bOmega_1$, sampled independently of $\bOmega_1$ and $\bOmega_2$, so that $\bOmega_2|\bOmega_1\sim\bar\bOmega_2 + \bC_{21}\bC_{11}^{-1}\bOmega_1$. We now condition on $\bOmega_1$ to obtain, by Proposition \ref{prop:gaussian-op-norm-E},
		\begin{align*}
			\E\|&\bSigma_2\bOmega_2\bOmega_1^\dagger\| \\
			&= \E\|\bSigma_2(\bar\bOmega_2 + \bC_{21}\bC_{11}^{-1}\bOmega_1)\bOmega_1^\dagger\| \\
			&\le \Big(
				\|(\bC_{22} - \bC_{21}\bC_{11}^{-1}\bC_{12})^{1/2}\|_\HS\,\E\|\bOmega_1^\dagger\| 
				+ \|\bC_{22} - \bC_{21}\bC_{11}^{-1}\bC_{12}\|^{1/2}\,\E\|\bOmega_1^\dagger\|_\Fr
				\Big) \|\bSigma_2\| \\
			&\qquad + \E\|\bSigma_2\bC_{21}\bC_{11}^{-1}\bOmega_1\bOmega_1^\dagger\|.
		\end{align*}
		First, observe that $\bOmega_1$ has full rank with probability 1, so $\bOmega_1\bOmega_1^\dagger=\bI_k$. Thus,
		$$\E\|\bSigma_2\bC_{21}\bC_{11}^{-1}\bOmega_1\bOmega_1^\dagger\| 
			\le \lambda_1\|\bSigma_2\|\|\bC_{11}^{-1}\|,$$
		using the bounds $\|\bC_{21}\|\le\|\bC\|\le\lambda_1$. Additionally, since $\bC\succeq\bC_{22}\succeq\bC_{22} - \bC_{21}\bC_{11}^{-1}\bC_{12}$, where $\succeq$ denotes the L\"owner order, then we have
		$$\|(\bC_{22} - \bC_{21}\bC_{11}^{-1}\bC_{12})^{1/2}\|_\HS \le \|\bC^{1/2}\|_\HS
			= \sqrt{\Tr(\bC)} = \sqrt{\Tr(K)}$$
		and $\|\bC_{22} - \bC_{21}\bC_{11}^{-1}\bC_{12}\|^{1/2} \le \sqrt{\lambda_1}$.
		Finally, by Corollary \ref{cor:op-norm-gaussian-pseudo-E} and \citet[Eq.~(10)]{Boulle2022a}, we have
		$$\E\|\bOmega_1^\dagger\| \le \frac{e\sqrt{\|\bC_{11}^{-1}\|(k+p)}}{p},\qquad
			\E\|\bOmega_1^\dagger\|_\Fr = \sqrt{\frac{\Tr(\bC_{11}^{-1})}{p+1}},$$
		which altogether yields
		$$\E\|\bSigma_2\bOmega_2\bOmega_1^\dagger\| 
			\le \left(\frac{e\sqrt{\Tr(K)\|\bC_{11}^{-1}\|(k+p)}}{p}
			+ \sqrt{\frac{\lambda_1\Tr(\bC_{11}^{-1})}{p+1}} 
			+ \lambda_1\|\bC_{11}^{-1}\|\right) \|\bSigma_2\|.$$
		The estimate \eqref{eq:rSVD-op-E} for the average error in the operator norm now follows.
		
		For \eqref{eq:rSVD-op-P}, we follow \citet[Thm.~10.8]{Halko2011} and define, for each $t\ge1$, the event
		$$E_t := \left\{
			\|\bOmega_1^\dagger\| \le \frac{e\sqrt{\|\bC_{11}^{-1}\|(k+p)}}{p+1} \cdot t 
			\quad\text{and}\quad
			\|\bOmega_1^\dagger\|_\Fr \le \sqrt{\frac{\Tr(\bC_{11}^{-1})}{p+1}} \cdot t\right\}$$
		such that $\P(E_t^c) \le 2t^{-p}$ by Proposition \ref{prop:op-norm-gaussian-pseudo} and \citet[Lem.~3]{Boulle2022a}. We also consider the random function $h(\bX):=\|\bSigma_2\bX\bOmega_1^\dagger\|$ defined on the space of $\infty\times(k+p)$ quasimatrices. The corresponding Cameron--Martin space $\cM$ with respect to the distribution $\cN(0,\bI_{k+p}\otimes\bC)$ is the space of $\infty\times(k+p)$ quasimatrices $\bY$ satisfying $\Tr(\bY^*\bC^{-1}\bY)<\infty$, or equivalently, 
		$$\cM = \{\bC^{1/2}\bX : \text{$\bX$ is an $\infty\times(k+p)$ quasimatrix}\},$$
		equipped with the inner product $\brak{\bY,\bZ}_\cM := \Tr(\bY^*\bC^{-1}\bZ)$ \citep[Ch.~2]{Bogachev1998}. Notice that $\|\bY\|_\HS\le\sqrt{\lambda_1}\|\bY\|_\cM$ for every $\bY\in\cM$. Thus, the function $h$ conditioned on $\bOmega_1$ is $\cM$-Lipschitzian with constant $\sqrt{\lambda_1}\|\bSigma_2\|\|\bOmega_1^\dagger\|$, since
		$$|h(\bX+\bY) - h(\bX)| \le \|\bSigma_2\bY\bOmega_1^\dagger\| 
			\le \sqrt{\lambda_1}\|\bSigma_2\|\|\bOmega_1^\dagger\|\|\bY\|_\cM$$
		holds for every quasimatrix $\bX$ and every $\bY\in\cM$. We now apply the concentration inequality of \citet[Thm.~4.5.7]{Bogachev1998} to $h$ conditioned on $\bOmega_1$, to obtain, for every $s\ge0$,
		\begin{multline*}
			\P\left\{\|\bSigma_2\bOmega_2\bOmega_1^\dagger\| 
			> \left(\sqrt{\Tr(K)}\|\bOmega_1^\dagger\| + \sqrt{\lambda_1}\|\bOmega_1^\dagger\|_\Fr 
				+ \lambda_1\|\bC_{11}^{-1}\| + \sqrt{\lambda_1}\|\bOmega_1^\dagger\|\cdot s\right) 
				\|\bSigma_2\|\mid E_t \right\} \\
			\le e^{-s^2/2},
		\end{multline*}
		where we make use of the fact
		$$\E[h(\bOmega_2)\mid\bOmega_1] \le \left(\sqrt{\Tr(K)}\|\bOmega_1^\dagger\| 
			+ \sqrt{\lambda_1}\|\bOmega_1^\dagger\|_\Fr + \lambda_1\|\bC_{11}^{-1}\|\right) 
			\|\bSigma_2\|$$
		by Proposition \ref{prop:gaussian-op-norm-E}. By definition of $E_t$, it follows that
		\begin{multline*}
			\P\left\{\|\bSigma_2\bOmega_2\bOmega_1^\dagger\|
			> \left[\left(\sqrt{\Tr(K)} + s\sqrt{\lambda_1}\right)
				\frac{e\sqrt{\|\bC_{11}^{-1}\|(k+p)}}{p+1} \cdot t\ + \right.\right. \\
			\left.\left. +\ \sqrt{\frac{\lambda_1\Tr(\bC_{11}^{-1})}{p+1}} \cdot t
				+ \lambda_1\|\bC_{11}^{-1}\|\right] \|\bSigma_2\| \mid E_t \right\}
				\le e^{-s^2/2}.
		\end{multline*}
		Since $\P(E_t^c)\le 2t^{-p}$, then we obtain
		\begin{multline*}
			\P\left\{\|\bSigma_2\bOmega_2\bOmega_1^\dagger\|
			> \left[\left(\sqrt{\Tr(K)} + s\sqrt{\lambda_1}\right)
				\frac{e\sqrt{\|\bC_{11}^{-1}\|(k+p)}}{p+1} \cdot t\ + \right.\right. \\
			\left.\left. +\ \sqrt{\frac{\lambda_1\Tr(\bC_{11}^{-1})}{p+1}} \cdot t
				+ \lambda_1\|\bC_{11}^{-1}\|\right] \|\bSigma_2\| \right\}
				\le 2t^{-p} + e^{-s^2/2}.
		\end{multline*}
		We combine this estimate with \citet[Lem.~1]{Boulle2022a} to conclude.
	\end{proof}

	The proof of Theorem \ref{thm:rSVD-power} follows easily.
	\begin{proof}(Theorem \ref{thm:rSVD-power})
		We first claim that $\|\F - \bP_{\bZ}\F\| \le \|\sH - \bP_{\bZ}\sH\|^{1/(2q+1)}$. Indeed, let $\bI$ be the identity operator on $L^2(D_2)$ and observe that $\mathbf I-\bP_{\bZ}$ is an orthogonal projector. Then Proposition \ref{prop:orthogonal-proj} yields
		$$\|(\mathbf I - \bP_{\bZ})\F\| 
			\le \|(\mathbf I - \bP_{\bZ})(\F\F^*)^q\F\|^{1/(2q+1)}
			= \|(\mathbf I - \bP_{\bZ})\sH\|^{1/(2q+1)}.$$
		Now noting that the singular values of $\sH$ are $\sigma_1^{2q+1}\ge\sigma_2^{2q+1}\ge\cdots$, the result immediately follows from Theorem \ref{thm:rSVD-op}.
	\end{proof}
	
\subsection{Proofs for Section \ref{ss:rSVA}}

Here, prove Theorem \ref{thm:rSVA}. In the following, let $\bTheta(\cX,\cY)$ denote the diagonal matrix of canonical angles between two subspaces $\cX,\cY$ of $L^2(D_2)$ with equal finite dimension \citep[Def.~I.5.3]{Stewart1990}. Additionally, for any (quasi)matrix $\bX$, we denote its column space by the calligraphic version of the same symbol, namely, $\cX$.
	
	\begin{lemma}\label{lem:rSVA-0}
		Let $\F:L^2(D_1)\to L^2(D_2)$ be a HS operator with SVE given by \eqref{eq:Ff-SVE}. Select an integer $k\ge1$ such that $\sigma_k > \sigma_{k+1}$. Select an integer $q\ge0$ and set
		\begin{equation}\label{eq:rSVA-lem-q}
			\delta_q = \left[\left(\frac{\sigma_k}{\sigma_{k+1}}\right)^{2q+1} - 1\right]^{-1} > 0.
		\end{equation}
		Let $\sH := (\F\F^*)^q\F$ and let $\tilde\sH:L^2(D_1)\to L^2(D_2)$ be a HS operator with finite rank $\ge k$ such that $\|\sH - \tilde\sH\| \le C\sigma_{k+1}^{2q+1}$ for some $C > 0$. Let $\cU_k$ be the dominant left $k$-singular subspace of $\sH$, and let $\tilde\cU_k$ be a dominant left $k$-singular subspace of $\tilde\sH$. If $C\delta_q < 1$, then
		\begin{equation}\label{eq:rSVA-lem}
			\|\sin\bTheta(\cU_k,\tilde\cU_k)\| \le \frac{C\delta_q}{1 - C\delta_q}.
		\end{equation}
	\end{lemma}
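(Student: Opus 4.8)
The plan is to invoke a Davis--Kahan-type perturbation bound for singular subspaces, with the gap being created not by $\F$ itself but by the power scheme. First, I would record the singular values of $\sH = (\F\F^*)^q\F$: they are exactly $\sigma_j^{2q+1}$, so the $k$th gap of $\sH$ is $\sigma_k^{2q+1} - \sigma_{k+1}^{2q+1}$. Dividing through by $\sigma_{k+1}^{2q+1}$ shows that this gap equals $\sigma_{k+1}^{2q+1}\big[(\sigma_k/\sigma_{k+1})^{2q+1} - 1\big] = \sigma_{k+1}^{2q+1}/\delta_q$, which is the quantity whose reciprocal is $\delta_q$. The point of raising to the power $2q+1$ is that this relative gap grows (it can be made as large as we like by increasing $q$), which is what lets the perturbation bound succeed even when $\sigma_k/\sigma_{k+1}$ is only barely larger than $1$.

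Next I would set up the perturbation. Write $\tilde\sH = \sH + \bE$ with $\|\bE\| \le C\sigma_{k+1}^{2q+1}$. The dominant left $k$-singular subspace $\cU_k$ of $\sH$ is the range of the spectral projector of $\sH\sH^* = (\F\F^*)^{2q+1}$ associated with its top $k$ eigenvalues, and similarly $\tilde\cU_k$ is a top-$k$ invariant subspace of $\tilde\sH\tilde\sH^*$. One route is to apply the $\sin\bTheta$ theorem directly to the operators $\sH,\tilde\sH$ viewed as maps into $L^2(D_2)$, using that $\tilde\sH$ has finite rank $\ge k$ (so that a genuine $k$-dimensional dominant subspace exists and canonical angles between $k$-dimensional subspaces are well defined). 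The relevant statement is: if the $k$th singular value gap of $\sH$ is $g$ and $\|\sH - \tilde\sH\| \le g'$ with $g' < g$, then $\|\sin\bTheta(\cU_k,\tilde\cU_k)\| \le g'/(g - g')$; this is a standard consequence of the Courant--Fischer/Wedin machinery already cited in the paper (\citealt{Stewart1990}, and the minimax principle used in Proposition \ref{prop:orthogonal-proj}). Here $g = \sigma_{k+1}^{2q+1}/\delta_q$ and $g' = C\sigma_{k+1}^{2q+1}$, so the hypothesis $C\delta_q < 1$ is exactly $g' < g$, and the bound becomes
$$\|\sin\bTheta(\cU_k,\tilde\cU_k)\| \le \frac{C\sigma_{k+1}^{2q+1}}{\sigma_{k+1}^{2q+1}/\delta_q - C\sigma_{k+1}^{2q+1}} = \frac{C\delta_q}{1 - C\delta_q},$$
which is \eqref{eq:rSVA-lem}.

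The main obstacle I anticipate is making the $\sin\bTheta$ bound rigorous in the infinite-dimensional, non-self-adjoint setting: one must be careful that $\tilde\sH\tilde\sH^*$ genuinely has a $k$-dimensional dominant eigenspace (ensured by the finite-rank hypothesis on $\tilde\sH$ combined with a Weyl-type bound forcing $\tilde\sigma_k$ to be strictly larger than $\tilde\sigma_{k+1}$, which follows once $C\delta_q < 1$), and that the canonical-angle formulation of Davis--Kahan/Wedin transfers verbatim to subspaces of the separable Hilbert space $L^2(D_2)$. I would handle this by passing to the compact self-adjoint operators $\sH\sH^*$ and $\tilde\sH\tilde\sH^*$ on $L^2(D_2)$, applying the $\sin\bTheta$ theorem for compact self-adjoint operators (the spectral-projector difference bound), and noting $\|\sH\sH^* - \tilde\sH\tilde\sH^*\| \le \|\bE\|(\|\sH\| + \|\tilde\sH\|)$ can be routed back to a bound of the right order — or, more cleanly, invoking the singular-subspace version of Wedin's theorem directly, whose hypotheses are precisely a singular-value gap and a norm bound on the perturbation. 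A secondary subtlety is that $\tilde\cU_k$ is only \emph{a} dominant subspace (possibly non-unique if $\tilde\sigma_k = \tilde\sigma_{k+1}$), but under $C\delta_q<1$ the Weyl bound rules that out, so $\tilde\cU_k$ is in fact unique and the canonical angles are unambiguous. Everything else is the short algebraic identification of the gap with $1/\delta_q$ carried out above.
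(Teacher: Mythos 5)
Your proposal is correct and follows essentially the same route as the paper: identify the singular values of $\sH$ as $\sigma_j^{2q+1}$, use a Weyl-type bound to locate $\tilde\sigma_k$ above $\sigma_{k+1}^{2q+1}$ by at least $(\delta_q^{-1}-C)\sigma_{k+1}^{2q+1}$, and feed that gap into Wedin's $\sin\bTheta$ theorem. The paper just spells out the Weyl step explicitly before citing Wedin's theorem (Stewart and Sun, Thm.~V.4.4), whereas you fold it into the quoted $g'/(g-g')$ form; the algebra identifying $\sigma_k^{2q+1}-\sigma_{k+1}^{2q+1}$ with $\sigma_{k+1}^{2q+1}/\delta_q$ is identical.
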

	\begin{proof}
		Let $\tilde\sigma_1\ge\tilde\sigma_2\ge\cdots$ be the singular values of $\tilde\sH$; the singular values of $\sH$ are given by $\sigma_1^{2q+1} \ge \sigma_2^{2q+1} \ge \cdots$. By Weyl's theorem, we have $|\tilde\sigma_k - \sigma_k^{2q+1}| \le C\sigma_{k+1}^{2q+1}$, whereas the assumption \eqref{eq:rSVA-lem-q} implies $\sigma_k^{2q+1} - \sigma_{k+1}^{2q+1} = \delta^{-1}_q\sigma_{k+1}^{2q+1}$. It follows that
		$$\tilde\sigma_k - \sigma_{k+1}^{2q+1} 
			\ge \sigma_k^{2q+1} - |\tilde\sigma_k - \sigma_k^{2q+1}| - \sigma_{k+1}^{2q+1}
			\ge (\delta^{-1}_q - C)\sigma_{k+1}^{2q+1},$$
		which in conjunction with Wedin's theorem \citep[Thm.~V.4.4]{Stewart1990} yields the desired bound.
	\end{proof}
	
	Combining Lemma \ref{lem:rSVA-0} with rSVD means we can approximate an operator's singular values at the extra cost of computing $\tilde\bU_k\F$, which takes an additional $k$ operator-function products. This provides the desired proof. \\
	
	\begin{proof}(Theorem \ref{thm:rSVA})
		Let $\bU_k$ be the $D_1\times k$ quasimatrix whose columns are dominant left $k$-singular functions of $\sH$. A corollary of the CS decomposition \citep[Exc.~I.5.6]{Stewart1990} along with Theorem \ref{thm:rSVD-op} and Lemma \ref{lem:rSVA-0} yields, with high probability,
		$$\|\bU_k - \tilde\bU_k\| \le 2\|\sin\bTheta(\cU_k,\tilde\cU_k)\| 
			\le \frac{2\delta_q A_{k,p}(s,t)}{1 - \delta_q A_{k,p}(s,t)}.$$
		Hence, we find that
		$$\|\bU_k^*\F - \tilde\bU_k^*\F\| 
			\le \frac{2\delta_q A_{k,p}(s,t)}{1 - \delta_q A_{k,p}(s,t)} \|\F\|.$$
		Notice that $\bU_k^*\F$ has the same dominant $k$ singular values as $\F$, so Weyl's theorem completes the proof.
	\end{proof}
	
\vskip 0.2in
\bibliography{bibliography}

\end{document}